\ifCLASSOPTIONtwocolumn\toggletrue{twocolumn}\else\togglefalse{twocolumn}\fi
\def\onlytwo#1{\iftoggle{twocolumn}{#1}{}}
\def\L{{\cal L}}
\def\bx{\mathbf x}
\def\cA{\mathcal A}
\def\cB{\mathcal B}
\def\minimize{\mathop{\text{minimize}}}
\newtheorem{lemma}{Lemma}
\newtheorem{theorem}{Theorem}
\newtheorem{define}{Definition}
\newtheorem{remark}{Remark}
\newtheorem{prop}{Proposition}
\newtheorem{assump}{Assumption}
\newcommand{\bigo}[1]{\mathcal{O}(#1)}
\newcommand{\eg}{{\it e.g.}}
\newcommand{\ie}{{\it i.e.}}
\newcommand{\norm}[1]{\|#1\|}
\newcommand{\argmin}{\text{argmin}}
\newcommand{\argmax}{\text{argmax}}
\newcommand{\st}{\text{subject to}}
\newcommand{\reals}{{\mbox{$\mathbf{R}$}}}
\newcommand{\size}[1]{\left|#1\right|}
\newcommand{\abs}[1]{\size{#1}}
\newcommand{\inn}[2]{\langle#1, #2\rangle}
\newcommand{\crit}{\mathop{\rm crit}}
\newcommand{\dist}{\mathop{\rm dist}}
\newcommand{\dom}{\mathop{\rm dom}}
\newcommand{\prox}{\text{prox} }
\def\x{{\mathbf x}}
\def\y{{\mathbf y}}
\def\z{{\mathbf z}}
\def\u{{\mathbf u}}
\def\bv{{\mathbf v}}
\def\w{{\mathbf w}}
\def\d{{\mathbf d}}
\def\A{{\mathbf A}}
\def\X{{\mathbf X}}
\def\U{{\mathbf U}}
\def\V{{\mathbf V}}
\def\E{{\mathbb E}}
\def\cW{{\mathcal W}}
\def\cA{{\mathcal A}}
\def\cB{{\mathcal B}}
\def\cS{{\mathcal S}}
\begin{document}

\title{Leveraging Two Reference Functions in Block Bregman Proximal Gradient Descent for Non-convex and Non-Lipschitz Problems}
\author{Tianxiang~Gao, \emph{Student Member, IEEE},
	Songtao~Lu, \emph{Member, IEEE},
	Jia~Liu, \emph{Senior Member, IEEE},
    \\and Chris~Chu, \emph{Fellow, IEEE}


\thanks{
    Tianxiang Gao and Chris Chu are with the Department of Electrical and Computer Engineering,
    Iowa State University, Ames, IA 50011, USA (emails: \{gaotx,cnch\}@iastate.edu).}
\thanks{
    Songtao Lu is with IBM Research AI, IBM Thomas J. Waston Research Center, Yorktown Heights, New York 10562,  USA (email: songtao@ibm.com). }
\thanks{
	Jia Liu is with the Department of Computer Science,
	Iowa State University, Ames, IA 50011, USA (email: jialiu@iastate.edu).}
}

\maketitle

\begin{abstract}
In the applications of signal processing and data analytics, there is a wide class of non-convex problems whose objective function is freed from the common global Lipschitz continuous gradient assumption (\eg, the nonnegative matrix factorization (NMF) problem). Recently, this type of problem with some certain special structures has been solved by Bregman proximal gradient (BPG). This inspires us to propose a new {\bf B}lock-wise {\bf two}-references {\bf B}regman proximal gradient (B2B) method, which adopts two reference functions so that a closed-form solution in the Bregman projection is obtained. Based on the relative smoothness, we prove the global convergence of the proposed algorithms for various block selection rules. In particular, we establish the global convergence rate of $\bigo{\frac{\sqrt{s}}{\sqrt{k}}}$ for the greedy and randomized block updating rule for B2B, which is $\bigo{\sqrt{s}}$ times faster than the cyclic variant, \ie, $\bigo{\frac{s}{\sqrt{k}} }$, where $s$ is the number of blocks, and $k$ is the number of iterations. Multiple numerical results are provided to illustrate the superiority of the proposed B2B compared to the state-of-the-art works in solving NMF problems.

\begin{keywords}
Nonconvex optimization, Bregman divergence, proximal gradient descent, block coordinate descent, relatively smooth, non-Lipschitz, nonnegative matrix factorization.
\end{keywords}
\end{abstract}

\section{Introduction} \label{sec:intro}
In this paper, we consider the following problem
\begin{align}
\minimize \; f(\x) \quad \st\; \x\geq 0,\label{opt:problem}
\end{align}
where the function $f$ is continuously differentiable and possibly non-convex. In the literature, Problem \eqref{opt:problem} is usually reformulated as
\begin{align}
	\minimize F(\x):=f(\x_1,\cdots,\x_s) + \sum_{b=1}^{s} r_b(\x_b),\label{opt:block}
\end{align}
where $\x$ is partitioned into $s$ blocks, and the function $r(\x) = \sum_{b=1}^{s}r_b(\x_b)$ is a block-structured nonsmooth regularizer. Problem \eqref{opt:block} is equivalent to Problem \eqref{opt:problem} when the regularizer is the indicator function of the nonnegative orthant, \ie, $r_b = \delta_+$.

Due to the block structure, Problem \eqref{opt:block} is usually solved by a \textit{block coordinate descent} (BCD) method, where $F$ is minimized over the $b$-th block exactly \cite{bertsekas1997nonlinear} or inexactly \cite{tseng2009coordinate,razaviyayn2013unified}. Since the update is block-wise, different block selection strategies usually result in various convergence behavrious/rates. In \cite{nesterov2012efficiency}, the authors provide the first convergence rate result of BCD by adopting the \textit{randomized} rule for convex and smooth optimization problems. Later, the same convergence rate is obtained in \cite{richtarik2014iteration} for nonsmooth convex problems, while a relatively slower sublinear convergence rate is proved by \cite{patrascu2015efficient} proves guarantee for the nonconvex setting. In \cite{nutini2015coordinate}, the authors show that a better convergence rate can be obtained by using Gauss-Southwell (G-So) or \textit{greedy} rule when the problem is unconstrained and strongly convex. In recent years, the convergence rate of the Gauss-Seidel (G-S) or \textit{cyclic} rule has also been extensively studied in the convex setting \cite{beck2013convergence,saha2013nonasymptotic,sun2015improved}. However, the convergence rate for the cyclic rule is usually the same or even slower than the randomized and greedy rule. In the non-convex settings, the previous work \cite{xu2017globally} estimates the convergence rate of the cyclic rule based on the assumption that $F$ satisfies Kurdyka-\L ojasiewicz (K\L) property.

A commonly used assumption in showing the convergence of BCD methods in the literature is that the gradient of $f$ is globally Lipschitz-continuous. However, this could be a restrictive assumption violated in diverse applications in practice, such as matrix factorization \cite{lee1999learning,pagd19}, tensor decomposition \cite{kim2007nonnegative}, matrix/tensor completion \cite{xu2012alternating}, Poisson likelihood models \cite{he2016fast}, etc. Although this assumption may be relaxed by adopting conventional line search methods, the efficiency and computational complexity of the BCD methods are unavoidably distorted, especially when the size of the problem is large. To overcome this longstanding isssue, existing works in \cite{bauschke2016descent,lu2018relatively} develop a new framework by adapting the geometry of $f$ through the Bregman distance paradigm, which helps to derive a descent lemma to quantify the decrease of the objective value by Bregman proximal gradient (BPG) instead of classical proximal gradient (PG). As a result, the convergence behaviour of BPG can be characterized without assuming globally Lipschitz-continuous gradient of the objective function. Further, this framework is extended in \cite{bolte2018first} to the case of nonconvex optimization.

Despite a cyclic Bregman BCD (CBBCD) method had been proposed in \cite{ahookhosh2019multi,wang2018block} by leveraging this framework, the convergence rate results of the Bregman BCD methods remains unknown. In this paper, we bridge this gap by conducting rigorous convergence rate analysis for different rules of block selection. We note that a main drawback of the (block) Bregman-proximal-based methods is that the Bregman projection problem (i.e., a constrained convex optimization that will be specified later) has no closed form solution, which necessitates an iterative algorithm involved to solve, resulting in increased computational complexity. To address this challenge, we propose a new {\bf b}lock-wise {\bf two} references {\bf B}regman proximal gradient descent (B2B) method by leveraging \textit{two} reference functions, where the original problem is split into two parts to induce a closed-form solution to the Bregman projection subproblem. Further, we show that the proposed B2B method is $\bigo{\sqrt{s}}$ times faster than the CBBCD method if the greedy or randomized block updating rule is used. The main contributions of this paper are highlighted as follows.


\noindent{\bf Convergence analysis}: we establish a rigorous convergence rate analysis of  the block-wise cyclic Bregman BCD (CBBCD) method, showing that its convergence rate to the stationary points is $\bigo{s/\sqrt{k}}$.

\noindent{\bf Implementation efficiency}: a new block-wise Bregman proximal gradient descent method is proposed by leveraging \textit{two} reference functions such that the Bregman projection of this method has a closed-form solution.

\noindent{\bf Faster convergence rate}: we prove that  if the greedy or randomized rule of updating blocks is adopted, B2B with a constant stepsize achieves a $\bigo{\sqrt{s}}$ times faster convergence rate than CBBCD, i.e., $\bigo{\sqrt{s}/\sqrt{k}}$.

\noindent{\bf Numerical discovery}: extensive experimental results reveal the superiority of the B2B method compared with the state-of-the-art counterparts implemented on a diverse dataset for nonnegative matrix factorization (NMF).


\section{Preliminaries}
In this section, we review the well-know proximal gradient method and its variant, namely, the Bregman proximal gradient method, as they are one of the baisc methods to minimize a composite objective function.

\subsection{Notation}
Throughout this paper, bold upper case letters denote matrices (\eg. $\X$), bold lower case letters denote vectors (\eg, $\x$), and Calligraphic letters (\eg, $\mathcal{X}$) are used to denote sets. We use $\norm{\cdot}$ to denote the Euclidean norm. $\delta_{\mathcal{X}}(\x)$ represents the indicator function: $ \delta_{\mathcal{X}}(\x) = 0$ if $\x\in \mathcal{X}$; otherwise, $ \delta_{\mathcal{X}}(\x) =\infty$. If $\mathcal{X}=\reals^N_+$, the indicator function becomes $\delta_+(\x)$. For a function $f$, $\nabla f(\x)$ denotes its the gradient, while $\nabla_b f(\x)$ is the partial gradient with respect to the $b$-th block. We also denote $f_b(\x_b)$ as a function of the $b$-th block, while the rest of the blocks are fixed. Clearly, $\nabla _b f(\x) = \nabla f_b(\x_b)$. If $f$ is not differentiable, $\partial f$ denotes the subdifferential of $f$.

Given a convex function $\phi$, the \textit{proximal mapping} of $\phi$ at a point $\bx$ is defined as
\begin{align}
	\prox_\phi (\x) = \argmin_\u \phi(\u) + \frac{1}{2}\norm{\u-\x}^2.\label{eq:proximal mapping}
\end{align}
This mapping is well-defined due to the convexity of $\phi$. If $\phi =\delta_{\mathcal{X}}$, \eqref{eq:proximal mapping} reduces to \textit{orthogonal projection }
\begin{align}
P_{\mathcal{X}}(\x) = \argmin\{\norm{\u-\x}: \u\in \mathcal{X}\}.
\end{align}
If, in addition, $\mathcal{X}=\reals_+^n$, the projection mapping has a  closed-form solution as the following
\begin{align}
[\x]_+ = \argmin\{\norm{\u-\x}: \u\geq \textbf{0}\} = \max\{\x,\textbf{0}\},\label{eq:orthogonal projection}
\end{align}
where the max operation is taken componentwise.

Similarly, the \textit{Bregman proximal mapping} is defined by replacing the Euclidean distance with the Bregman distance
\begin{align}
T_{\phi}(\x) = \argmin_\u \phi( \u) + D_h(\u, \x),
\end{align}
where $D_h(\u,\x) = h(\u) - h(\x) - \inn{\nabla h(\x)}{\u-\x}$ is the \textit{Bregman distance} with the reference convex function $h$. This mapping is also well-defined since the functions $\phi$ and $h$ are convex. The convexity of $h$ also implies $D_h(\x,\y)\geq 0, \forall \x,\y$. If, in addition, $h$ is strictly convex, $D_h(\x,\y) = 0$ if and only if $\x=\y$. In the rest of this paper, we assume $h$ is strictly convex. Note that $D_h(\x,\y)$ is not symmetric in general. Therefore, we use \textit{symmetric coefficient} $\beta(h)=\inf \left\{\frac{D_h(\x,\y)}{D_h(\y,\x)}: \x \neq \y \right\}$ to measure the symmetry. When $\phi=\delta_{\mathcal{\x}}$, the Bregman proximal mapping reduces to the \textit{Bregman projection}
\begin{align}
P_{\mathcal{X}}^h(\x)=\argmin\{D_h(\u,\x):\u\in \mathcal{X}\}.
\end{align}
Clearly, the Bregman projection is much harder to solve in general compared to the orthogonal projection for $\mathcal{X} = \reals_+^n$. Note that if we choose the energy function as the reference function, \ie, $h(\cdot)=\frac{1}{2}\norm{\cdot}^2$, the Bregman proximal mapping and the Bregman projection boils down to the classical proximal mapping and orthogonal projection.

\subsection{Proximal Gradient Method}
%
Next, we review the standard proximal gradient (PG) method since it is a fundamental method for minimizing the sum of a smooth function $f$ with a nonsmooth one $r$, \ie,
\begin{align}
\minimize \;F(\x) := f(\x) + r(\x).\label{opt:PG}
\end{align}
The following assumptions are made for Problem \eqref{opt:PG} throughout the paper.
\begin{assump}\label{assump:lower bounded}
	\begin{itemize}
		\item[(\emph{i})] $f$ is continuously differentiable.
		\item[(\emph{ii})] $r$ is a proper and lower semicontinuous.
		\item[(\emph{iii})] $F^* = \inf_{\x} F(\x) > -\infty$.
	\end{itemize}
\end{assump}
Clearly, Problem \eqref{opt:problem} satisfies the first two assumptions above. The last assumption is equivalent to assuming $f^* = \inf_{\x\geq 0} f(\x) > -\infty$.

At the $k$-th iteration, by linearizing the smooth function $f$, the PG method minimizes the following subproblem,
\begin{equation}\label{eq:proximal}
\resizebox{0.91\hsize}{!}{%
	$
	\x^{k+1}=\argmin_\u \; r(\u) + \inn{\nabla f(\x^k) }{\u-\x^k} + \frac{1}{2\alpha^k}\norm{\u - \x^k}^2,
	$
}
\end{equation}
where $\alpha^k$ is some positive stepsize. In the notation of proximal mapping, Eq. \eqref{eq:proximal} can be rewritten as
\begin{align}
\x^{k+1}=\prox_{\alpha^k r}(\x^k - \alpha^k \nabla f(\x^k))\label{eq:PG}.
\end{align}
For $r = \delta_+$, the update rule in \eqref{eq:PG} becomes
\begin{align}
\x^{k+1} = [\x^k - \alpha^k \nabla f(\x^k)]_+. \label{eq:bcd x_i}
\end{align}
Due to the simplicity of the orthogonal projection, PG is broadly used to solve \eqref{opt:problem}.

Eq. \eqref{eq:PG} can be further expressed in a more concise form as
\begin{align}
\x^{k+1} = \x^k - \alpha^k G(\x^k),
\end{align}
where $G(\x^k)$ is called the \textit{generalized gradient} and defined by
\begin{align}
G(\x^k) = \frac{\x^k - \prox(\x^k - \alpha^k \nabla f(\x^k))}{\alpha^k}.\label{eq:generalized gradient}
\end{align}
Similar to the norm of the gradient for unconstrained problems, $\|G(\x^k)\|$ can be used to measure the optimality since
\begin{align}
G(\x^k)\in \nabla f(\x^k) + \partial r(\x^{k+1}),
\end{align}
and it is easy to show that $\norm{G(\x^*)} = 0$ if and only if $\x^*$ is a critical point of \eqref{opt:PG} defined by
$
0\in \partial F(\x^*).
$

The convergence results can be established by using a conventional line search method. However, a line search strategy is inefficient since it may need to evaluate the objective values multiple times so as to ensure a descent in the objective value. For many large-scale optimization problems, evaluating the objective function is inefficient or even impossible in some cases. Therefore, a constant stepsize with a predefined value is favored in practice. To establish the convergence results of the PG method with a constant stepsize, a common and crucial assumption is that $\nabla f(\x)$ is globally Lipschitz-continuous, \ie, there exists a constant $\ell > 0$ such that
\begin{align}
\norm{\nabla f(\y) - \nabla f(\x)} \leq \ell \norm{\y- \x}, \quad\forall \x,\y\geq 0.
\end{align}
With $0 < \alpha^k  \leq \frac{1}{\ell}$, the classic convergence result indicates that $\{ \|G(\x^k)\|\}$ converges to zero at the rate of $\bigo{1/\sqrt{k}}$. However, the global Lipschitz-continuity is a restrictive assumption. In the past, many objective functions in modern optimization problems do not satisfy this assumption. Another limitation of PG is that, similar to gradient descent, it suffers a very slow rate of convergence as it approaches a critical point in a zig-zag manner.

\subsection{Bregman Proximal Gradient}
The limitations of PG discussed above has recently been solved in \cite{bauschke2016descent,lu2018relatively}, which proposed the \textit{Bregman proximal gradient} (BPG) method that does not require the global Lipschitz-continuous gradient in objective functions.


In the $k$-th iteration, the BPG method constructs a similar subproblem as in \eqref{eq:proximal} by replacing the Euclidean distance with the Bregman distance, \ie,
\begin{align}
\resizebox{0.88\hsize}{!}{$
\x^{k+1}=\argmin_\u\; r(\u)  + \inn{\nabla f(\x^k) }{\u-\x^k} + \frac{1}{\alpha^k}D_h (\u,\x^k).
$}
\end{align}
In the view of the Bregman proximal mapping, we have
\begin{align}
\x^{k+1} = T_{\alpha^k r} [T_{\alpha^k\hat{f}}(\x^k) ],\label{eq:bregman mapping solution}
\end{align}
where $\hat{f}(\u)=f(\x^k) + \inn{\nabla f(\x^k)}{\u-\x^k}$ is the linear approximation of $f$ at $\x^k$. The update rule in \eqref{eq:bregman mapping solution} involves a two-step operation, which can be written explicitly by
\begin{subequations}\label{eq:two steps}
\begin{align}
\y^{k+1} &= \argmin_\u \;\inn{\nabla f(\x^k)}{\u-\x^k} + \frac{1}{\alpha^k}D_h(\u, \x^k),\\
\x^{k+1} &= \argmin_\u \; r(\u) +  \frac{1}{\alpha^k}D_h(\u, \y^{k+1}).\label{eq:two steps Bregman projection}
\end{align}
\end{subequations}
Based on relative smoothness (to be explained in the next section), the convergence results of BPG are obtained in \cite{bauschke2016descent,lu2018relatively} for convex and in \cite{bolte2018first} for nonconvex settings, respectively. Although using Bregman proximal mapping overcomes the global Lipschitz-continuous gradient issue, the two subproblems in \eqref{eq:two steps} are in general not easy to be solved, even for $r= \delta_+$. For example, if $h(\x) = \frac{1}{2}\inn{\x}{\A \x}$ for some positive definite matrix $\A$, then the Bregman projection becomes a quadratic optimization problem under a componentwise nonnegative constraint, which does not have closed-form solutions for subproblem \eqref{eq:two steps Bregman projection}. In Section~\ref{section: two reference}, we propose a new algorithm that uses a different reference function for the Bregman projection so that a closed-form solution can be obtained.

\section{Block-wise Bregman Proximal Gradient}\label{section:bcd}
In this section, we first propose a (cyclic) Bregman BCD (BBCD) method, which extends the previous cyclic BCD by using the Bregman distance \footnote{While writing this paper, \cite{ahookhosh2019multi} proposes a similar cyclic BBCD method, but they did not provide the convergence rate.}.

Instead of updating all coordinates simultaneously, the BBCD method selects and updates a subset of blocks in each iteration while the rest of the blocks are fixed. At the $k$-th iteration, BBCD selects an index set $\mathcal{C}^k\subseteq\{1, \cdots, s\}$ such that (such that) if $b\in \mathcal{C}^k$, the $b$-th block can be updated by
\begin{align}\label{opt:GS}
\resizebox{.88\hsize}{!}
{$
	\x_{b}^{k+1}=\argmin_\u\; r_{b}(\u) + \inn{\nabla f^k_{b} (\x^k_{b})}{\u-\x_{b}^k}
	+ \frac{1}{\alpha^k}D_h(\u, \x^k_{b}),
	$}
\end{align}
otherwise it remains the same, \ie, $\x_b^{k+1} = \x_b^k$ for all $b\notin \mathcal{C}^k$.
Note that we simplify the notation by dropping the index $b$ in $D_{h_b}$. In a BPG fashion, Eq. \eqref{opt:GS} can be also rewritten as a two-step operation
\begin{subequations}
\begin{align}
\y_b^{k+1} &= \argmin \;\inn{\nabla f_b^k(\x^k)}{\u-\x_b^k} + \frac{1}{\alpha^k}D_h(\u, \x_b^k),\label{eq:two steps block}\\
\x_b^{k+1} &= \argmin \;r_b(\u) +  \frac{1}{\alpha^k}D_h(\u, \y_b^{k+1}).\label{eq:cyc bcd projection}
\end{align}
\end{subequations}

In contrast to the PG and BPG methods, $\norm{G(\x^k)}$ or $\norm{\x^{k+1}-\x^k}$ is not appropriate for measuring the optimality, because it is possible that only a subset of blocks are selected and updated in the whole process. Instead, \textit{projected gradient} $\nabla^P f(\x^k)$ is commonly used to measure optimality. In the case of $r = \delta_+$, $\nabla^P f(\x^k)$ is defined by \cite{lin2007projected}
\begin{align}\label{eq:projected gradient}
\nabla^P f(\x) \triangleq \begin{cases}
\nabla_i f(\x^k),	&\text{if $\x_i > 0$},\\
\min\{0, \nabla_i f(\x^k)\}, &\text{if $\x_i = 0$}.
\end{cases}
\end{align}
Similar to $\|G(\x^k)\|$, we have $\norm{\nabla^P f(\x^*) } = 0$ if and only if $\x^*$ is a critical point. Therefore, one needs to keep track of $\norm{\nabla^P f(\x^k)}$ as the algorithm proceeds, and stop the algorithm when $\norm{\nabla^P f(\x^k)}$ is small enough. Therefore, we define $\norm{\nabla^P f(\x^k)}$ as the \emph{optimality gap}.

The Gauss-Seidel (G-S) or cyclic rule used in Algorithm~\ref{alg:GS} is a special rule since it includes all blocks in $\mathcal{C}^k$ and updates them in the cyclic manner. As a result, $\norm{\x^{k+1}-\x^k}$ can be used to measure the optimality. Next, we provide a series of analyses and convergence results for the cyclic BBCD method.
\begin{algorithm}[h]
	\caption{Cyclic BBCD method.}
	\label{alg:GS}
	
	Choose $\x^0\in \reals^n_+$.\\
	\Repeat{Some stopping criterion is satisfied}{
		\For{$b= 1$ to $s$}{
			Set $\alpha^k$ and update $\x_{b}^k$ by \eqref{opt:GS}
		}
	}
\end{algorithm}

We start with the definition of \textit{relative smoothness} \cite{lu2018relatively,bauschke2016descent}, by which a new descent lemma is obtained without the assumption of the global Lipschitz-continuity of $\nabla f(\x)$.
\begin{define}\cite[Definition~1.1]{lu2018relatively}
	A pair of functions $(g, h)$ are said to be relatively smooth if $h$ is convex and there exists a scalar $L>0$ such that $Lh-g$  and $Lh+g$ are convex.
\end{define}
Note that the above definition holds for every convex function $h$, even $g$ is nonconvex. Moreover, the relative smoothness nicely translates the Bregman distance to produce a non-Lipschitz descent lemma \cite{lu2018relatively,bauschke2016descent}.
\begin{lemma}\cite[Lemma~2.1]{bolte2018first}\label{lemma:nolip}
	The pair of functions $(g,h)$ is relatively smooth if and only if for all $\x$ and $\y$, it holds that
	\begin{align}
	\abs{g(\y)- g(\x) - \inn{\nabla g(\x)}{\y-\x}} \leq LD_h(\y,\x).\label{lemma:nolip eq0}
	\end{align}
\end{lemma}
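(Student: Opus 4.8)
The plan is to deduce everything from the standard first-order characterization of convexity: a differentiable function $\psi$ is convex if and only if $\psi(\y)\geq \psi(\x)+\inn{\nabla\psi(\x)}{\y-\x}$ for \emph{all} $\x,\y$. Here $g$ is differentiable (its gradient appears in the statement) and $h$ is differentiable (its gradient is needed for $D_h$ to be defined), so $Lh-g$ and $Lh+g$ are differentiable functions to which this characterization applies; I would record these regularity/open-domain conventions at the start of the proof.

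For the forward implication, suppose $Lh-g$ and $Lh+g$ are convex. First I would write the gradient inequality for $\psi=Lh-g$ and regroup the linear terms to get
\begin{align*}
0 &\leq (Lh-g)(\y)-(Lh-g)(\x)-\inn{\nabla(Lh-g)(\x)}{\y-\x}\\
&= L D_h(\y,\x)-\big(g(\y)-g(\x)-\inn{\nabla g(\x)}{\y-\x}\big),
\end{align*}
which rearranges to $g(\y)-g(\x)-\inn{\nabla g(\x)}{\y-\x}\leq L D_h(\y,\x)$. Running the identical computation with $\psi=Lh+g$ yields the reverse bound $-\big(g(\y)-g(\x)-\inn{\nabla g(\x)}{\y-\x}\big)\leq L D_h(\y,\x)$. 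Combining the two inequalities gives \eqref{lemma:nolip eq0}.

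For the converse, suppose \eqref{lemma:nolip eq0} holds for all $\x,\y$. Stripping the absolute value produces exactly the two one-sided inequalities above; reversing the algebra (expand $D_h$, collect linear terms) turns $g(\y)-g(\x)-\inn{\nabla g(\x)}{\y-\x}\leq L D_h(\y,\x)$ into the gradient inequality $(Lh-g)(\y)\geq (Lh-g)(\x)+\inn{\nabla(Lh-g)(\x)}{\y-\x}$, valid for every $(\x,\y)$, so $Lh-g$ is convex; symmetrically the other one-sided inequality makes $Lh+g$ convex. Since $h=\tfrac{1}{2L}\big((Lh-g)+(Lh+g)\big)$ is then convex as well, $(g,h)$ is relatively smooth.

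This whole argument is bookkeeping, so I do not expect a genuine obstacle. The one subtlety worth flagging is that the first-order convexity test is an equivalence only when the inequality is demanded at \emph{all} pairs of points — precisely the content of \eqref{lemma:nolip eq0} — together with the implicit differentiability and open-domain hypotheses needed for $D_h$ and for the convexity characterization to be valid.
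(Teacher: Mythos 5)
Your argument is correct and complete: both directions follow from the first-order characterization of convexity applied to $Lh-g$ and $Lh+g$, and you correctly handle the extra requirement that $h$ itself be convex in the converse via $h=\tfrac{1}{2L}\bigl((Lh-g)+(Lh+g)\bigr)$. Note that the paper does not prove this lemma at all --- it imports it verbatim from the cited reference --- so there is nothing to compare against; your proof is the standard one given in that reference, and it is fine as written.
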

\begin{remark}\label{remark:nolip}
	\begin{itemize}
		\item[(\emph{i})] For the purpose of this paper, it is sufficient to only consider the convex condition of $Lh-g$ and the corresponding descent lemma, \ie, $g(\y)- g(\x) - \inn{\nabla g(\x)}{y-x} \leq LD_h(y,x)$.
		\item[(\emph{ii})] In abuse of the definition of Bregman distance $D_g$ (since $g$ is not convex), the non-Lipschitz descent lemma in Lemma~\ref{lemma:nolip} can be written as $\abs{D_g(y,x)}\leq D_h(y,x)$.
		\item[(\emph{iii})] In the special case with $h=\frac{1}{2}\norm{\cdot}^2$, the classical descent lemma is recovered
		\begin{align}
		\abs{g(\y)- g(\x) - \inn{\nabla g(\x)}{\y-\x}} \leq \frac{L}{2}\norm{\y-\x}^2.\nonumber
		\end{align}
		\item[(\emph{iv})] The relative smooth property is invariant when $h$ is $m$-strongly convex \cite{bolte2018first}.
	\end{itemize}
\end{remark}

For the rest of this paper, we additionally make the following assumptions.
\begin{assump}\label{assume: convex}
	\begin{itemize}
		\item[(\emph{i})] $(f_b, h_b)$ are relatively smooth with constant $L_b > 0$ and let $L = \underset{b}{\max}\{L_b\}$.
		\item[(\emph{ii})] $h_b$ is $m_b$-strongly convex and set $m = \underset{b}{\min}\{m_b\}$.
	\end{itemize}
\end{assump}

With the relative smoothness between $(f_b, h_b)$, the following proposition shows the basic convergence results. A similar results can be found in \cite{ahookhosh2019multi,wang2018block}.
\begin{prop}\label{prop:cyclic prop}
	Let $\{\x^k\}$ be the sequence generated by Algorithm~\ref{alg:GS} with G-S block selection rule and $\alpha^k = \alpha$ such that $0 < \alpha  < \frac{1}{L}$. The following assertions hold:
	\begin{itemize}
		\item[(\emph{i})] The sequence $\{F(\x^k)\}$ is nonincreasing, \ie,
		$
			\sum_{b=1}^s \left(\frac{1}{\alpha^k} - L_b\right)D_h(\x_b^{k+1},\x_b^k)\leq F(\x^k)-	F(\x^{k+1}).
		$
		\item[(\emph{ii})] $\sum_{k=0}^{\infty} \left[\sum_{b=1}^s D_h(\x_b^{k+1}, \x_b^k)\right] < \infty$, and hence for all $b$ $\lim\limits_{k\rightarrow \infty}D_h(\x_b^{k+1},\x_b^k) = 0$.
		\item[(\emph{iii})] $
		\min_{0\leq k\leq N} \left[\sum_{b=1}^s D_h(\x_b^{k+1},\x_b^k)\right] \leq \frac{\alpha(F(\x^0) - F^*)}{(N+1) (1 - \alpha L)}.
		$
	\end{itemize}
\end{prop}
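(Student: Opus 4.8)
The plan is to establish (i) first---a per-iteration sufficient-decrease inequality obtained by aggregating block-wise decreases---and then read off (ii) and (iii) by telescoping. Fix an outer iteration $k$ and introduce the intermediate points $\x^{k,0} = \x^k,\ \x^{k,1},\ \ldots,\ \x^{k,s} = \x^{k+1}$, where $\x^{k,b}$ coincides with $\x^{k,b-1}$ in all blocks except block $b$, whose entry is updated to $\x_b^{k+1}$ via \eqref{opt:GS}; thus the partial gradient appearing in \eqref{opt:GS} is $\nabla_b f(\x^{k,b-1})$. Because only block $b$ changes when passing from $\x^{k,b-1}$ to $\x^{k,b}$, we have $F(\x^{k,b-1}) - F(\x^{k,b}) = \big[f(\x^{k,b-1}) - f(\x^{k,b})\big] + \big[r_b(\x_b^k) - r_b(\x_b^{k+1})\big]$, and I would bound each bracket separately.

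For the regularizer bracket I would compare the objective of \eqref{opt:GS} at its minimizer $\x_b^{k+1}$ with its value at the feasible competitor $\u = \x_b^k$; since $D_h(\x_b^k,\x_b^k) = 0$, this gives $r_b(\x_b^k) - r_b(\x_b^{k+1}) \ge \inn{\nabla_b f(\x^{k,b-1})}{\x_b^{k+1} - \x_b^k} + \frac{1}{\alpha^k} D_h(\x_b^{k+1}, \x_b^k)$. For the smooth bracket I would apply the one-sided relative-smoothness descent inequality of Lemma~\ref{lemma:nolip} (in the form of Remark~\ref{remark:nolip}(i)) to the pair $(f_b, h_b)$ with constant $L_b$, holding all blocks but $b$ fixed at their values in $\x^{k,b-1}$: this yields $f(\x^{k,b}) - f(\x^{k,b-1}) \le \inn{\nabla_b f(\x^{k,b-1})}{\x_b^{k+1} - \x_b^k} + L_b D_h(\x_b^{k+1}, \x_b^k)$. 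Adding the two bounds, the inner-product terms cancel exactly, leaving $\left(\frac{1}{\alpha^k} - L_b\right) D_h(\x_b^{k+1}, \x_b^k) \le F(\x^{k,b-1}) - F(\x^{k,b})$. Summing over $b = 1,\ldots,s$ telescopes the right-hand side to $F(\x^k) - F(\x^{k+1})$, which is (i); with $\alpha^k = \alpha < 1/L \le 1/L_b$ every coefficient on the left is positive, so $\{F(\x^k)\}$ is nonincreasing.

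Assertions (ii) and (iii) then follow from (i) by lower-bounding each coefficient, $\frac{1}{\alpha} - L_b \ge \frac{1}{\alpha} - L > 0$, to get $\left(\frac{1}{\alpha} - L\right) \sum_{b=1}^s D_h(\x_b^{k+1}, \x_b^k) \le F(\x^k) - F(\x^{k+1})$, and then summing over $k = 0,\ldots,N$; the right side telescopes to $F(\x^0) - F(\x^{N+1}) \le F(\x^0) - F^*$ by Assumption~\ref{assump:lower bounded}(iii). Letting $N \to \infty$ gives the finite sum in (ii), hence $D_h(\x_b^{k+1},\x_b^k) \to 0$ for each $b$; and since the minimum over $0 \le k \le N$ is at most the average, $\min_{0 \le k \le N} \sum_b D_h(\x_b^{k+1},\x_b^k) \le \frac{F(\x^0) - F^*}{(N+1)(1/\alpha - L)} = \frac{\alpha(F(\x^0) - F^*)}{(N+1)(1 - \alpha L)}$. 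The only real subtlety is the bookkeeping in (i): one must ensure the same vector $\nabla_b f(\x^{k,b-1})$---the partial gradient at the current Gauss-Seidel iterate---appears in both the subproblem-optimality bound and the descent inequality, so that the inner products genuinely cancel; everything else is a routine telescoping argument.
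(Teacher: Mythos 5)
Your proposal is correct and follows essentially the same route as the paper: compare the subproblem objective at its minimizer $\x_b^{k+1}$ against the competitor $\x_b^k$, add the one-sided relative-smoothness descent inequality for $(f_b,h_b)$ so the inner products cancel, sum over blocks using the Gauss--Seidel telescoping identity $f_b^k(\x_b^k)=f_{b-1}^k(\x_{b-1}^{k+1})$, and then telescope over $k$ for (ii) and (iii). Your explicit intermediate iterates $\x^{k,0},\ldots,\x^{k,s}$ are just a cleaner rendering of the bookkeeping the paper handles implicitly through the $f_b^k$ notation.
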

\begin{remark}
	\begin{itemize}
		\item[(\emph{i})] If $r$ is convex, then we have $0 < \alpha L < 1 + \beta$, where $\beta = \underset{b}{\min}\{\beta(h_b)\}$. In particular, using the subgradient of $r$, we obtain a stronger inequality
		$
			r_b(\x_b^{k+1})\leq r_b(\x_b^k) + \frac{1+\beta(h_b)}{\alpha^k} D_h(\x_b^{k+1},\x_b^k),
		$
		where we use $\beta(h_b)D_h(\x_b^{k+1},\x_b^k) \leq D_h(\x_b^k, \x_b^{k+1}) $. Hence, we have
		$
			\sum_{b=1}^s \left(\frac{1 + \beta(h_b)}{\alpha^k} - L_b\right)D_h(\x_b^{k+1},\x_b^k)\leq F(\x^k)-	F(\x^{k+1}).
		$
		\item[(\emph{ii})] The reference function $h_b$ for each block could be varied in different iterations. As a result, the coefficient $L_b$ should be written as $L_b^k$ since it could also change in different iterations. To simplify the expression, however, we assume the same reference function for each block in different iterations, so that we can set $L_b = \underset{k}{\max}\{L_b^k\}$ and the resulting analysis is the same.
	\end{itemize}
	
\end{remark}

In order to show the sequence $\{\x^k\}$ approaching to a critical point, we first show the subgradient of $F$ is upper bounded. For that purpose, we make the following additional assumption for this section.
\begin{assump}\label{assume:cyclic lipschitz}
	$\nabla f(\x)$ and $\nabla h_b(\x)$ are Lipschitz-continuous with constant $\ell > 0$ in any bounded set.
\end{assump}

\begin{prop}\label{prop:cyclic convergence}
	Let $\{\x^k\}$ be the sequence generated by Algorithm~\ref{alg:GS} that is assumed to be bounded. Let $0 < \alpha^k L < 1, \forall k$. For all $k\geq 0$, we have
	\begin{align}
	\norm{\w^{k+1}} \leq  s \ell \left(1+\frac{1}{\alpha^k}\right)  \norm{\x^{k+1} - \x^k},
	\end{align}	
	where $\w^{k+1}\in \partial F(\x^{k+1})$. Then every limit point of $\{\x^k\}$ is a critical point of $F$.
\end{prop}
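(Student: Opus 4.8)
The plan is to first convert the first-order optimality condition of the block update \eqref{opt:GS} into the asserted bound on a subgradient of $F$, and then combine that bound with Proposition~\ref{prop:cyclic prop} and closedness of the subdifferential to identify limit points as critical points.

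\textbf{Subgradient bound.} Write $\x^{k,b}\bydef(\x_1^{k+1},\dots,\x_{b-1}^{k+1},\x_b^k,\dots,\x_s^k)$ for the running iterate just before block $b$ is processed in sweep $k$, so that $\x^{k,1}=\x^k$, $\x^{k,s+1}=\x^{k+1}$, and the partial gradient appearing in \eqref{opt:GS} is $\nabla_b f(\x^{k,b})$. Fermat's rule for \eqref{opt:GS} together with the sum rule for the (limiting) subdifferential gives
\begin{align}
\xi_b^{k+1}\bydef -\nabla_b f(\x^{k,b})-\tfrac{1}{\alpha^k}\bigl(\nabla h(\x_b^{k+1})-\nabla h(\x_b^k)\bigr)\in\partial r_b(\x_b^{k+1}),\nonumber
\end{align}
and since $f\in C^1$ and $r$ is block-separable, the vector $\w^{k+1}$ with blocks $\w_b^{k+1}\bydef\nabla_b f(\x^{k+1})+\xi_b^{k+1}$ belongs to $\partial F(\x^{k+1})$. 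Because $\x^{k,b}$ and $\x^{k+1}$ share their first $b-1$ blocks, $\norm{\x^{k+1}-\x^{k,b}}\le\norm{\x^{k+1}-\x^k}$; as $\{\x^k\}$ (hence all the intermediate $\x^{k,b}$) lie in a common bounded set, Assumption~\ref{assume:cyclic lipschitz} yields $\norm{\w_b^{k+1}}\le\ell(1+\tfrac{1}{\alpha^k})\norm{\x^{k+1}-\x^k}$, and summing the squares over $b$ gives $\norm{\w^{k+1}}\le\sqrt{s}\,\ell(1+\tfrac{1}{\alpha^k})\norm{\x^{k+1}-\x^k}\le s\,\ell(1+\tfrac{1}{\alpha^k})\norm{\x^{k+1}-\x^k}$, which is the stated inequality.

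\textbf{Limit points.} By Proposition~\ref{prop:cyclic prop}(ii), $\sum_b D_h(\x_b^{k+1},\x_b^k)\to0$, and $m_b$-strong convexity of $h_b$ gives $D_h(\x_b^{k+1},\x_b^k)\ge\tfrac{m}{2}\norm{\x_b^{k+1}-\x_b^k}^2$, so $\norm{\x^{k+1}-\x^k}\to0$. Let $\x^{k_j}\to\x^*$; then $\x^{k_j+1}\to\x^*$ as well, and (provided $\inf_k\alpha^k>0$, e.g. the constant stepsize of Proposition~\ref{prop:cyclic prop}) the subgradient bound forces $\w^{k_j+1}\to0$ with $\w^{k_j+1}\in\partial F(\x^{k_j+1})$. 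When $r$ is convex this already yields $0\in\partial F(\x^*)$ by maximal monotonicity of $\partial r$ and continuity of $\nabla f$; for a general proper lsc $r$ one first checks $F(\x^{k_j+1})\to F(\x^*)$ by inserting $\u=\x_b^*$ into the minimality of the $b$-th update and using $\x^{k_j,b}\to\x^*$, $D_h(\x_b^{k_j+1},\x_b^{k_j})\to0$ and $D_h(\x_b^*,\x_b^{k_j})\to0$ to obtain $\limsup_j r_b(\x_b^{k_j+1})\le r_b(\x_b^*)$, and then invokes closedness of the graph of the limiting subdifferential. Either way $0\in\partial F(\x^*)$, i.e., $\x^*$ is a critical point of $F$.

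\textbf{Main obstacle.} The subgradient bound is essentially routine bookkeeping; the delicate step is the passage to the limit when $r$ is merely lower semicontinuous, where continuity of $r$ is unavailable and one must instead extract $F$-value convergence along the subsequence from the proximal optimality of the individual block updates, while simultaneously verifying that the Gauss--Seidel intermediate points $\x^{k,b}$ stay inside the bounded set so that a single Lipschitz constant $\ell$ applies, and that the stepsizes remain bounded away from $0$.
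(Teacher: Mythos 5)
Your proposal is correct and follows essentially the same route as the paper's proof: the same Fermat-rule decomposition of $\w_b^{k+1}$ from the optimality condition of \eqref{opt:GS}, the same Lipschitz bounding on the bounded iterate set, and the same limit-point argument combining lower semicontinuity of $r_b$ with the minimality comparison at $\u=\x_b^*$ and closedness of $\partial F$. The only (harmless) refinement is that you aggregate the block bounds in $\ell^2$ to get the sharper constant $\sqrt{s}\,\ell(1+1/\alpha^k)$ before relaxing to the stated $s\,\ell(1+1/\alpha^k)$, whereas the paper sums the block norms directly.
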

The boundedness of the sequence $\{\x^k\}$ is a common assumption in the literature (\eg, \cite{bolte2014proximal,attouch2010proximal,attouch2009convergence}), because the function $f(\x)$ in many applications has bounded level sets and the descent in the objective function is guaranteed. For more details please see \cite{attouch2010proximal}.

The following proposition says that $\nabla^P f(\x^k) $ defined by \eqref{eq:projected gradient} is a subgradient of $F$ when $r = \delta_+$.
\begin{prop}\label{prop:subgradient equiavlent}
	If $r(\x) = \delta_+(\x)$, then $\nabla^P f(\x)\in \partial F(\x)$.
\end{prop}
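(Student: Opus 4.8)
The plan is to expand $\partial F(\x)$ with the sum rule for a smooth-plus-nonsmooth function and then verify componentwise that $\nabla^P f(\x)$ lands inside the resulting set. Since $f$ is continuously differentiable (Assumption~\ref{assump:lower bounded}(i)), we have $\partial F(\x) = \nabla f(\x) + \partial \delta_+(\x)$, where $\partial \delta_+(\x)$ equals the normal cone $N_{\reals^n_+}(\x)$ of the nonnegative orthant at $\x$. The statement is only meaningful for $\x \geq 0$ — otherwise $F(\x) = +\infty$, $\partial F(\x) = \emptyset$, and $\nabla^P f(\x)$ is undefined by \eqref{eq:projected gradient} — so I restrict to $\x \in \reals^n_+$ throughout.

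First I would record the explicit form of the normal cone: for $\x \geq 0$, a vector $\bv$ belongs to $N_{\reals^n_+}(\x)$ if and only if $v_i = 0$ for every index with $x_i > 0$ and $v_i \leq 0$ for every index with $x_i = 0$. This is immediate from the defining variational inequality $\inn{\bv}{\y - \x} \leq 0$ for all $\y \geq 0$: testing two-sided perturbations $\y = \x + t e_i$ forces $v_i = 0$ when $x_i > 0$, while one-sided perturbations ($t \geq 0$) force $v_i \leq 0$ when $x_i = 0$, and conversely these conditions clearly imply $\inn{\bv}{\y-\x} = \sum_{i: x_i=0} v_i y_i \leq 0$.

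The core step is then to show $\bv := \nabla^P f(\x) - \nabla f(\x) \in N_{\reals^n_+}(\x)$ by a two-case check against \eqref{eq:projected gradient}. If $x_i > 0$ then $v_i = \nabla_i f(\x) - \nabla_i f(\x) = 0$; if $x_i = 0$ then $v_i = \min\{0, \nabla_i f(\x)\} - \nabla_i f(\x) = \min\{-\nabla_i f(\x),\, 0\} \leq 0$. Hence $\bv$ meets the characterization above, so $\nabla^P f(\x) = \nabla f(\x) + \bv \in \nabla f(\x) + N_{\reals^n_+}(\x) = \partial F(\x)$, as claimed. There is no substantive obstacle here: the only points requiring care are invoking the correct convention that $\partial \delta_+$ is the normal cone and explicitly dispatching the vacuous case $\x \not\geq 0$; the remainder is a short sign verification.
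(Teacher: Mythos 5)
Your proposal is correct and follows essentially the same route as the paper: reduce to showing $\nabla^P f(\x) - \nabla f(\x) \in \partial \delta_+(\x)$, characterize that subdifferential (your normal-cone description is equivalent to the paper's $\{\bv : \inn{\bv}{\x}=0,\ \bv\leq 0\}$ for $\x\geq 0$), and verify the membership by a componentwise sign check. The only cosmetic differences are that you justify the normal-cone characterization via perturbations and merge the paper's two $x_i=0$ subcases into one line.
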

Combining Proposition~\ref{prop:subgradient equiavlent} with Propositions~\ref{prop:cyclic prop} (\emph{iii}) and \ref{prop:cyclic convergence}, we immediately obtain the following inequality for CBBCD:
\begin{align*}
\min_{0\leq k\leq N} \norm{\nabla^P f(\x^k)}^2
\leq \frac{2s^2 \ell^2 (1+\alpha)\left(f(\x^0) -f^*\right)}{(N+1)(1-\alpha L)m}.
\end{align*}
Therefore, we have $\norm{\nabla^P f(\x^k)}$ converges to zero at the rate of $\bigo{s/\sqrt{k}}$.

Similar to BPG, computing the Bregman projection \eqref{eq:cyc bcd projection} is expensive in general. In the following section, we propose another BCD-type method that uses two different reference functions so that projection operation admits a closed-form solution. Further, the proposed method is also applicable for greedy or randomized rules so that it achieves a faster convergence rate.
\section{Block-wise Two References Bregman Proximal Gradient Descent}\label{section: two reference}
As we discussed in the previous sections, a stronger convergence result can be obtained by using the Bregman distance. However, the projection operation \eqref{eq:two steps Bregman projection} or \eqref{eq:cyc bcd projection} might be computationally expensive. To resolve this issue, we use a \textit{different} reference function $g$ for the projection subproblem so that the projection operation can be easily solved. We call this method \textit{Block-wise Two references Bregman proximal gradient} (B2B) method. With two different reference functions $h$ and $g$, the update rule \eqref{eq:two steps block}-\eqref{eq:cyc bcd projection} becomes
\begin{subequations}
\begin{align}
\d_b^{k} &= \argmin_\d\; \inn{\nabla f_b(\x_b^k)}{\d_b} + D_h(\x_b^k + \d_b,\x_b^k), \label{opt:two metric mapping}\\
\x_b^{k+1} &= \argmin_\u \; r_b(\u) +  D_g(\u,\x_b^k + \alpha^k \d_b^k).\label{opt:two metric projection}
\end{align}
\end{subequations}
Here we first compute the search direction $\d^k$, where $\d_b^k$ is given by \eqref{opt:two metric mapping} and the rest entries are set to zero. The search direction is intuitive, since we have $\d^k = -\nabla f(\x^k)$ for PG, $\d^k=(\y^{k+1}-\x^k)/\alpha^k$ for BPG, and $\d_b^k=(\y_b^{k+1}-\x_b^k)/\alpha^k$ for CBBCD.

In the case of $r = \delta_+$, we set $g = \frac{1}{2}\norm{\cdot}^2$, then the $b$-th block update can be written as:
\begin{align}
\x_b^{k+1} = [\x_b^k + \alpha^k \d_b^k]_+,\label{eq:update}
\end{align}
where we use the fact that orthogonal projection has a closed-form solution \eqref{eq:orthogonal projection}. Clearly, the projection operation is much cheaper than the Bregman projection used in \eqref{eq:two steps block}-\eqref{eq:cyc bcd projection}. However, the obtained direction $\d^k$ is not always a descent direction. In \cite[Figure 1.2]{bertsekas2014constrained}, a counterexample with $h(\x) = \frac{1}{2}\inn{\x}{\A\x}$ was provided \cite{bertsekas2014constrained}, where one can obtain $f(\x^{k+1}) > f(\x^k)$ for \textit{all} $\alpha^k > 0$ with an unfavored positive definite matrix $\A$. By leveraging the special structure of $\delta_+$, we identify a class of valid blocks by which the descent of the objective value is guaranteed.

\subsection{Feasible descent direction and line search}
We define the notion of \textit{valid} coordinate by which a feasible descent direction is found so that the objective value is continuously decreased in each iteration for an appropriate stepsize.
\begin{define}
	A coordinate $\x_i$ is valid if it satisfies
	\begin{align}
	-\nabla_i f(\x)\notin \partial \delta_+(\x_i).
	\end{align}
\end{define}
In our B2B method, we enforce only using the valid coordinates in each block. As a result, the following lemma shows that the obtained direction $\d^k$ can always induce a feasible descent direction that guarantees a descent in the objective value.

\begin{lemma}\label{lemma:critical point}\label{lemma:descent}
	Define a uni-variate variable function of $\alpha$ as
	\begin{align}
	\x_b^k(\alpha) = [\x_b^k + \alpha \d_b^k]_+, \quad \forall \alpha > 0.
	\end{align}
	
	\begin{enumerate}
		\item[(\emph{i})] The following assertions are equivalent:
		\begin{itemize}
			\item[(1)] A vector $\x^k$ is a critical point;
			\item[(2)] $\norm{\nabla^P f(\x^k)} = 0$;
			\item[(3)] $-\nabla_b f(\x) \in \delta_+(\x_b)$, $\forall b$;
			\item[(4)] $\x_b^k(\alpha) = \x_b^k$, $\forall \alpha, b$.
		\end{itemize}
		\item[(\emph{ii})] 	If $\x^k$ is not a critical point and the selected block is valid, then there exists a stepsize $\overline{\alpha}_k$ such that
		\begin{align}
		f(\x^{k}(\alpha)) < f(\x^k), \quad \forall \alpha \in (0,\overline{\alpha}_k]\label{lemma:descent eq0}.
		\end{align}
	\end{enumerate}

\end{lemma}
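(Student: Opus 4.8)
The plan is to prove part (\emph{i}) first, establishing the chain of equivalences among the four characterizations of a critical point, and then to use that machinery to obtain the descent property in part (\emph{ii}). For part (\emph{i}), I would set up a cycle of implications. The equivalence $(1)\Leftrightarrow(2)$ is essentially already noted in the text right after the definition of $\nabla^P f$ (namely $\norm{\nabla^P f(\x^*)}=0$ iff $0\in\partial F(\x^*)$), so I would just invoke that, optionally unpacking it via the formula \eqref{eq:projected gradient}: componentwise, $\nabla^P_i f(\x)=0$ says either $\x_i>0$ and $\nabla_i f(\x)=0$, or $\x_i=0$ and $\nabla_i f(\x)\ge 0$, which is exactly $-\nabla_i f(\x)\in\partial\delta_+(\x_i)$ since $\partial\delta_+(\x_i)=\{0\}$ when $\x_i>0$ and $\partial\delta_+(0)=(-\infty,0]$. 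That same componentwise computation gives $(2)\Leftrightarrow(3)$ (after grouping coordinates into blocks). So the real content is $(3)\Leftrightarrow(4)$.

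For $(3)\Rightarrow(4)$: if $-\nabla_b f(\x)\in\partial\delta_+(\x_b)$ for all $b$, I want to show the B2B direction $\d_b^k$ defined by \eqref{opt:two metric mapping} satisfies $[\x_b^k+\alpha\d_b^k]_+=\x_b^k$ for every $\alpha>0$. The key is the first-order optimality condition of the strictly convex subproblem \eqref{opt:two metric mapping}: $\nabla f_b(\x_b^k)+\nabla h(\x_b^k+\d_b^k)-\nabla h(\x_b^k)=0$, i.e. $\nabla h(\x_b^k+\d_b^k)=\nabla h(\x_b^k)-\nabla f_b(\x_b^k)$. When $-\nabla_b f(\x)=0$ (the interior coordinates), this forces $\d_b^k=0$ by strict convexity; for the boundary coordinates where $\x_i=0$ and $\nabla_i f\ge 0$, one checks $[\x_i^k+\alpha\d_i^k]_+=0=\x_i^k$, which follows because $\d_i^k\le 0$ there — I would argue this via the monotonicity of $\nabla h$ restricted to the relevant coordinate (or, more carefully, noting that since $h$ is separable-enough / the Bregman geometry keeps the iterate feasible, moving from $0$ in direction $\d_i^k$ stays $\le 0$ after projection). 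Conversely $(4)\Rightarrow(3)$: if $[\x_b^k+\alpha\d_b^k]_+=\x_b^k$ for all small $\alpha$, then for interior coordinates $\d_i^k=0$, which plugged into the optimality condition forces $\nabla_i f(\x_b^k)=0$; for boundary coordinates the projection being inactive forces $\d_i^k\le 0$, and feeding this back through $\nabla h(\x_b^k+\d_b^k)=\nabla h(\x_b^k)-\nabla f_b(\x_b^k)$ together with monotonicity of $\nabla h$ yields $\nabla_i f\ge 0$, i.e. $-\nabla_i f\in\partial\delta_+(0)$.

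For part (\emph{ii}), suppose $\x^k$ is not critical and the selected block $b$ is valid, so by the Definition of valid coordinate there is at least one coordinate $i$ in the block with $-\nabla_i f(\x)\notin\partial\delta_+(\x_i)$; by part (\emph{i}) this means $\d_b^k\ne 0$ and the curve $\alpha\mapsto\x_b^k(\alpha)$ is not constant. I would then show $\d_b^k$ is a feasible descent direction for $f$ at $\x_b^k$ in the projected-arc sense: compute the one-sided derivative $\frac{d}{d\alpha}f(\x^k(\alpha))\big|_{\alpha=0^+}=\inn{\nabla_b f(\x_b^k)}{\frac{d}{d\alpha}\x_b^k(\alpha)|_{0^+}}$ and argue this is strictly negative. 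The clean way: use the optimality of $\d_b^k$ in \eqref{opt:two metric mapping} to get $\inn{\nabla_b f(\x_b^k)}{\d_b^k}+D_h(\x_b^k+\d_b^k,\x_b^k)\le \inn{\nabla_b f(\x_b^k)}{0}+D_h(\x_b^k,\x_b^k)=0$, hence $\inn{\nabla_b f(\x_b^k)}{\d_b^k}\le -D_h(\x_b^k+\d_b^k,\x_b^k)<0$ by strict convexity of $h$ since $\d_b^k\ne 0$; then relate $\inn{\nabla_b f}{\d_b^k}$ to the directional derivative of the projected arc (the projection onto $\reals_+$ only zeroes out components that are being pushed further negative, which cannot increase the inner product against... here I need to be careful — on active coordinates the arc derivative differs from $\d_b^k$, but precisely on those coordinates $\x_i^k=0$, $\d_i^k<0$, and $\nabla_i f(\x_b^k)\ge 0$ eventually, so their contribution $\nabla_i f\cdot 0$ in the arc derivative is $\ge \nabla_i f\cdot\d_i^k$, i.e. dropping them only helps). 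So the arc derivative at $0^+$ is $\le \inn{\nabla_b f(\x_b^k)}{\d_b^k}<0$. Finally, by differentiability of $f$ and of the projection arc (piecewise-linear), a strictly negative one-sided derivative gives $\overline{\alpha}_k>0$ with $f(\x^k(\alpha))<f(\x^k)$ on $(0,\overline\alpha_k]$.

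The main obstacle I anticipate is the bookkeeping on the \emph{active} coordinates of the projection in both directions: showing that $\d_i^k\le 0$ whenever $\x_i^k=0$ (needed so the projection arc is well-behaved and so that zeroing active components doesn't destroy the descent inequality), and carefully matching the one-sided derivative of $\alpha\mapsto[\x_b^k+\alpha\d_b^k]_+$ with $\inn{\nabla_b f}{\d_b^k}$. Everything else — the optimality conditions, strict convexity giving $D_h(\x_b^k+\d_b^k,\x_b^k)>0$ when $\d_b^k\ne 0$, and the elementary argument that a negative right-derivative yields an interval of decrease — is routine.
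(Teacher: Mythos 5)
Your proposal follows essentially the same route as the paper: part (\emph{i}) is the same cycle of componentwise computations on $\nabla^P f$ and $\partial\delta_+$ plus the optimality condition of \eqref{opt:two metric mapping}, and part (\emph{ii}) is the paper's argument with the index sets $\cA^k,\cB^k$ and the truncated direction $\overline{\d}_b^k$ in different clothing, with the key inequality $\inn{\nabla_b f(\x^k)}{\d_b^k}\leq -D_h(\x_b^k+\d_b^k,\x_b^k)<0$ obtained identically. Two local points. First, in your active-coordinate bookkeeping the sign is backwards: validity at a coordinate with $\x_i^k=0$ forces $\nabla_i f(\x^k)<0$ (not $\geq 0$), and it is precisely this sign that makes the dropped terms satisfy $\nabla_i f(\x^k)\cdot\d_i^k\geq 0$ and hence $\inn{\nabla_b f(\x^k)}{\overline{\d}_b^k}\leq\inn{\nabla_b f(\x^k)}{\d_b^k}$ as in \eqref{lemma:descent eq4}; as written, your two consecutive sentences assert opposite inequalities, and with $\nabla_i f\geq 0$ the truncation would weaken rather than preserve the descent bound. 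Second, for (3)$\Rightarrow$(4) the paper does not establish $\d_i^k\leq 0$ on boundary coordinates via monotonicity of $\nabla h$ (which, as you suspect, would require separability of $h$); it simply invokes the algorithmic convention that invalid coordinates are removed, so under (3) every coordinate is invalid and $\d_b^k=\mathbf{0}$ by fiat.
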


With Lemma~\ref{lemma:descent}, we can establish the stationary convergence result by using an Armijo-like line search rule. Here scalars $\tau$, $\sigma$ and $\alpha_0$ are fixed. Choosing $\tau\in(0,1)$ and $\sigma\in (0,1/2)$, and we set $\alpha^k = \tau^{m_k}\alpha_0$, where $m_k$ is the smallest positive integer that satisfies
\begin{align}
\resizebox{0.88\hsize}{!}{
	$
	f(\x^k) - f(\x^k(\tau^{m} \alpha_0)) \geq -\sigma \inn{\nabla_b f(\x^k)}{\x_b^k(\tau^{m} \alpha_0) - \x_b^k}.\label{eq:line search}
	$
}
\end{align}


\begin{theorem}[Convergence of the line search method]\label{thm:line search}
	Let $\{\x^k\}$ be the sequence generated by \eqref{eq:update} with line search \eqref{eq:line search}. Then every limit point of $\{\x^k\}$ is a critical point.
\end{theorem}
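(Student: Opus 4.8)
The plan is to follow the classical convergence template for projected gradient--type methods with an Armijo backtracking line search; the only nonstandard ingredient is the interplay between the Euclidean projection $[\,\cdot\,]_+$ and the two--reference direction $\d^k$, which is precisely what the notion of a valid coordinate (and Lemma~\ref{lemma:descent}) is designed to control.

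First I would check that the iteration is well defined and $f$--monotone. Write $b_k$ for the block updated at iteration $k$. If $\x^k$ is critical, Lemma~\ref{lemma:descent}(i) gives $\x_b^k(\alpha)=\x_b^k$ for every $\alpha$ and the iterate does not move; otherwise $b_k$ is valid and Lemma~\ref{lemma:descent}(ii) gives $f(\x^k(\alpha))<f(\x^k)$ for all small $\alpha>0$. Combining this with the first--order expansion $f(\x^k)-f(\x^k(\alpha))=-\inn{\nabla_{b_k}f(\x^k)}{\x_{b_k}^k(\alpha)-\x_{b_k}^k}+o(\alpha)$ (valid since $f$ is continuously differentiable and $\alpha\mapsto\x_b^k(\alpha)$ is Lipschitz near $0$) and $\sigma<1$ shows \eqref{eq:line search} holds for all large enough $m$, so $m_k$ is finite. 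Hence $\{f(\x^k)\}$ is non-increasing; since the iterates stay feasible ($\x^k\geq 0$ by \eqref{eq:update}) and $\inf_{\x\geq 0}f=f^*>-\infty$ by Assumption~\ref{assump:lower bounded}, $\{f(\x^k)\}$ converges and $f(\x^k)-f(\x^{k+1})\to 0$. Feeding this back into \eqref{eq:line search} gives the one--sided forcing estimate $-\sigma\inn{\nabla_{b_k}f(\x^k)}{\x_{b_k}^{k+1}-\x_{b_k}^k}\le f(\x^k)-f(\x^{k+1})\to 0$.

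Next I would collect the facts that make the limiting arguments work: the optimality condition of \eqref{opt:two metric mapping}, namely $\nabla h(\x_b^k+\d_b^k)=\nabla h(\x_b^k)-\nabla f_b(\x_b^k)$, together with strong convexity of $h$ (Assumption~\ref{assume: convex}) yields $\inn{\nabla_b f(\x^k)}{\d_b^k}\le -m\norm{\d_b^k}^2$ and $\norm{\d_b^k}\le\tfrac1m\norm{\nabla_b f(\x^k)}$, so $\d^k$ is gradient related and bounded on bounded sets; the map $\x_b\mapsto\d_b(\x_b)$ defined by \eqref{opt:two metric mapping} is continuous (minimizer of a strongly convex problem with continuously varying data), hence so is $(\x_b,\alpha)\mapsto[\x_b+\alpha\d_b(\x_b)]_+$; and for fixed $\x_b$ the function $\alpha\mapsto\norm{\x_b(\alpha)-\x_b}$ is nondecreasing. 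Now let $\x^*$ be a limit point, $\x^{k_j}\to\x^*$, and suppose $\x^*$ is not critical, so by Lemma~\ref{lemma:descent}(i) some block $b^*$ is valid at $\x^*$; passing to a further subsequence (finitely many blocks; the block--selection rule; for the cyclic sweep one instead follows a whole cycle and uses $\x^{k+1}-\x^k\to 0$ so that all intra--cycle iterates converge to $\x^*$) assume $b_{k_j}\equiv b^*$ and $\alpha^{k_j}\to\overline\alpha\in[0,\alpha_0]$. If $\overline\alpha=0$, then for large $j$ the previous trial stepsize $\widehat\alpha_j:=\alpha^{k_j}/\tau\to 0$ violates \eqref{eq:line search}; dividing the violated inequality by $\widehat\alpha_j$, using the differentiability of $f$, $\widehat\alpha_j\to0$, $\d_{b^*}^{k_j}\to\d_{b^*}^*$, and (a further subsequence) $(\x_{b^*}^{k_j}(\widehat\alpha_j)-\x_{b^*}^{k_j})/\widehat\alpha_j\to\widehat\d$ for some feasible direction $\widehat\d$ at $\x^*$, and letting $j\to\infty$ yields $\inn{\nabla_{b^*}f(\x^*)}{\widehat\d}\ge\sigma\inn{\nabla_{b^*}f(\x^*)}{\widehat\d}$, hence $\inn{\nabla_{b^*}f(\x^*)}{\widehat\d}\ge 0$ since $\sigma<1$, contradicting $\inn{\nabla_{b^*}f(\x^*)}{\widehat\d}<0$, which holds for the valid block $b^*$ by the computation in the proof of Lemma~\ref{lemma:descent}. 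If $\overline\alpha>0$, then $\{m_{k_j}\}$ is bounded, so (further subsequence) $\alpha^{k_j}\equiv\overline\alpha$; the accepted step then forces $\inn{\nabla_{b^*}f(\x^*)}{[\x_{b^*}^*+\overline\alpha\d_{b^*}^*]_+-\x_{b^*}^*}\ge 0$ (from the forcing estimate and continuity), and a parallel argument using the previous trial $\overline\alpha/\tau$ (when $m_{k_j}\ge1$) together with the monotonicity above forces $\x_{b^*}^*$ to be a fixed point of $\alpha\mapsto\x_{b^*}^*(\alpha)$, again contradicting that $b^*$ is valid at $\x^*$. In either case $\x^*$ is critical.

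The main obstacle is the last step. The case $\overline\alpha=0$ is the textbook one, but it must be executed carefully because $\alpha\mapsto\x_b^k(\alpha)$ is only piecewise linear with breakpoints that move with $\x_b^k$, so one has to argue that the difference quotients at the nearby iterates $\x^{k_j}$ converge to a genuine strict--descent feasible direction at $\x^*$; this is exactly the content of (the proof of) Lemma~\ref{lemma:descent} combined with the continuity and strong--convexity facts above. The case $\overline\alpha>0$ (stepsizes bounded away from $0$) and, for the cyclic rule, the preliminary claim $\x^{k+1}-\x^k\to 0$ are where the analysis genuinely departs from a canned Armijo argument, and I would expect that to be where most of the work lies.
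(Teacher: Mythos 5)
Your proposal follows essentially the same route as the paper's own proof: monotone decrease of $\{f(\x^k)\}$ forces the Armijo term $-\sigma\alpha^k\inn{\nabla_b f(\x^k)}{\overline{\d}_b^k}$ to zero, and at a putative non-critical limit point one passes to the previously rejected trial stepsize, divides by it, and takes limits (mean value theorem / difference quotients) to obtain $\inn{\nabla_b f(\overline{\x})}{\hat{\d}_b}\geq 0$, contradicting the strict descent property of valid blocks from Lemma~\ref{lemma:descent}. If anything your write-up is slightly more careful than the paper's, which asserts $\alpha_q\to 0$ along the convergent subsequence without separately treating the case of stepsizes bounded away from zero, a case you dispatch directly from the vanishing forcing term.
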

The global convergence result is obtained without the assumption of global Lipschitz-continuous gradient. However, a line search strategy may be inefficient since it has to evaluate the objective function values multiple times to ensure the sufficient descent in the objective value. In the next subsection, we establish the convergence results for the constant stepsize strategy under mild conditions.

\subsection{Constant stepsize}
In practice, a line search strategy is not computational efficient, especially for high-dimensional problems, since evaluating the objective function is expensive or even impossible in many applications. Therefore, using a predefined constant stepsize is preferred in practice. The generic B2B algorithm with a constant stepsize is given in Algorithm~\ref{alg:B2B}. Note that the B2B method in Algorithm~\ref{alg:B2B} uses either the greedy or randomized rule.
\begin{algorithm}[h]
	\caption{B2B method with a constant stepsize}
	\label{alg:B2B}
	Choose $\x^0\in \reals^n_+$ and $\alpha$.\\
	\Repeat{Some stopping criterion is satisfied}{
		Select a block $b\in \{1,2,\cdots, s\}$ by \eqref{eq:greedy} or uniformly at random\\
		Remove the invalid coordinates \\
		Compute $\d^k$ by \eqref{opt:two metric mapping}\\
		Set $\alpha^k = \alpha$\\
		Obtain $\x^{k+1}$ by \eqref{eq:update}
	}
\end{algorithm}

We make the following additional assumption for the reference function $h_b$ in the rest of this section.
\begin{assump}\label{assume: smooth}
	The function $\nabla h_b$ is $M_b$-smooth on any bounded set and let $M = \underset{b}{\max}\{M_b\}$.
\end{assump}
By Lemma~\ref{lemma:nolip}, we can easily obtain the following fundamental inequality, which will play a crucial role in establishing the main convergence result.
\begin{lemma}\label{lemma:decrease}
	Let $\{\x^k\}$ be the sequence generated by Algorithm~\ref{alg:B2B} that is assumed to be bounded. Then we have
	\begin{align}
		f(\x^{k+1})\leq f(\x^k) - \frac{\alpha^k(1+\beta)m_b}{2}\left(1-\frac{L_b M_b\alpha^k}{(1+\beta)m_b}\right)\norm{\d_b^k}^2 .\label{lemma:decrease eq inequality}
	\end{align}
	In particular, with $0 < \alpha^k < \frac{ (1+\beta)m_b}{L_bM_b}$, a sufficient descent in the objective value of $f$ is ensured.
\end{lemma}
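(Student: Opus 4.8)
The plan is to localize the analysis to the single block that is updated, invoke the relative-smoothness descent lemma, and then control the two resulting terms via the optimality conditions of the two subproblems \eqref{opt:two metric mapping}--\eqref{opt:two metric projection}. Since Algorithm~\ref{alg:B2B} freezes every block other than the selected block $b$ and removes the invalid coordinates of block $b$ (so $\d^k$ is supported on valid coordinates only), we have $f(\x^{k+1})-f(\x^k)=f_b(\x_b^{k+1})-f_b(\x_b^k)$. Applying the non-Lipschitz descent lemma (Lemma~\ref{lemma:nolip}, in the one-sided form of Remark~\ref{remark:nolip}(i)) to the relatively smooth pair $(f_b,h_b)$,
\begin{align*}
f_b(\x_b^{k+1})-f_b(\x_b^k)\leq \inn{\nabla f_b(\x_b^k)}{\x_b^{k+1}-\x_b^k}+L_b D_h(\x_b^{k+1},\x_b^k),
\end{align*}
so it remains to (a) bound the Bregman term and (b) bound the linear term.

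For (a), Assumption~\ref{assume: smooth} gives the ordinary descent inequality for $h_b$, i.e. $D_h(\x_b^{k+1},\x_b^k)\le\frac{M_b}{2}\norm{\x_b^{k+1}-\x_b^k}^2$; and because the update \eqref{eq:update} is the Euclidean projection onto $\reals^{n_b}_+$, which is nonexpansive, and $[\x_b^k]_+=\x_b^k$ since $\x^k\ge 0$, we get $\norm{\x_b^{k+1}-\x_b^k}=\norm{[\x_b^k+\alpha^k\d_b^k]_+-[\x_b^k]_+}\le\alpha^k\norm{\d_b^k}$. Hence $L_b D_h(\x_b^{k+1},\x_b^k)\le\frac{L_bM_b(\alpha^k)^2}{2}\norm{\d_b^k}^2$, which is exactly the subtracted term $\frac{\alpha^k(1+\beta)m_b}{2}\cdot\frac{L_bM_b\alpha^k}{(1+\beta)m_b}\norm{\d_b^k}^2$ appearing in \eqref{lemma:decrease eq inequality}.

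For (b), I would start from the first-order optimality of \eqref{opt:two metric mapping}, $\nabla f_b(\x_b^k)=\nabla h(\x_b^k)-\nabla h(\x_b^k+\d_b^k)$ on the retained coordinates. The three-point Bregman identity then gives $\inn{\nabla f_b(\x_b^k)}{\d_b^k}=-\big(D_h(\x_b^k+\d_b^k,\x_b^k)+D_h(\x_b^k,\x_b^k+\d_b^k)\big)$, and with the $m_b$-strong convexity of $h_b$ and the symmetry coefficient $\beta$ (i.e. $\beta D_h(\u,\bv)\le D_h(\bv,\u)$, used as in the remark after Proposition~\ref{prop:cyclic prop}) this is $\le -(1+\beta)D_h(\x_b^k+\d_b^k,\x_b^k)\le-\tfrac{(1+\beta)m_b}{2}\norm{\d_b^k}^2$. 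It then remains to transfer this estimate from the unprojected direction $\d_b^k$ to the actual step $\x_b^{k+1}-\x_b^k$, i.e. to show $\inn{\nabla f_b(\x_b^k)}{\x_b^{k+1}-\x_b^k}\le \alpha^k\inn{\nabla f_b(\x_b^k)}{\d_b^k}$. For that I would use the variational inequality of the projection \eqref{eq:update} with test point $\x_b^k\ge 0$, which expresses $\x_b^{k+1}-\x_b^k-\alpha^k\d_b^k$ as (minus) a normal-cone vector at $\x_b^{k+1}$, together with the sign information on $\nabla_b f$ that the valid-coordinate definition forces on the active coordinates, to conclude that this correction term is nonpositive. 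Substituting (a) and (b) into the descent lemma yields \eqref{lemma:decrease eq inequality}, and the factor $1-\frac{L_bM_b\alpha^k}{(1+\beta)m_b}$ is strictly positive precisely when $0<\alpha^k<\frac{(1+\beta)m_b}{L_bM_b}$.

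The main obstacle is this last transfer in (b): a two-metric/projected step is in general \emph{not} a descent direction (the positive-definite counterexample cited just after \eqref{eq:update}), so the argument must exploit exactly that only valid coordinates are kept -- this is what controls the normal-cone correction $\inn{\nabla f_b(\x_b^k)}{\x_b^{k+1}-\x_b^k-\alpha^k\d_b^k}$ -- and it must also use that $\alpha^k$ is bounded so the projection does not overshoot the constraint on coordinates where $f_b$ could otherwise turn upward. Everything else is a routine combination of the descent lemma with the strong convexity and smoothness of the reference functions and the symmetry coefficient.
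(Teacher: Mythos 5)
Your proposal is correct and follows essentially the same route as the paper's proof: the one-sided descent lemma for $(f_b,h_b)$, the three-point Bregman identity plus the symmetry coefficient and $m_b$-strong convexity to bound $\inn{\nabla_b f(\x^k)}{\d_b^k}$ by $-\tfrac{(1+\beta)m_b}{2}\norm{\d_b^k}^2$, and $M_b$-smoothness with nonexpansiveness of the projection for the Bregman remainder. The transfer step you flag as the main obstacle is exactly what the paper handles through the truncated direction $\overline{\d}_b^k$ and the inequalities \eqref{lemma:descent eq3}--\eqref{lemma:descent eq4} established in the proof of Lemma~\ref{lemma:descent} (with the same implicit no-overshoot caveat on $\alpha^k$ that you correctly identify); your normal-cone phrasing of that step is equivalent.
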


Maximizing the function $\theta(\alpha) = \alpha\left( (1+\beta)m -L M \alpha \right)$ with respect to $\alpha$ yields the optimal stepsize $\alpha^* = \frac{(1+\beta)m}{2LM}$, which gives the following convergence results.
\begin{prop}\label{prop:subsequence}
	Let $\{\x^k\}$ be the sequence generated by Algorithm~\ref{alg:B2B} that is assumed to be bounded. Set $\alpha^k\equiv\alpha$, where $0 < \alpha \leq \frac{(1+\beta)m}{2LM}$. Then the following assertions hold:
	\begin{itemize}
		\item[(\emph{i})] The sequence $\{f(\x^k)\}$ is nonincreasing, and satisfies $\forall k$, $
		f(\x^{k+1})\leq f(\x^k)-\frac{LM}{2} \norm{\x^{k+1} -\x^k}^2.\label{prop:subsequence inequality}
		$
		\item[(\emph{ii})] $\sum_{k=0}^{\infty}  \norm{\x^{k+1} -\x^k}^2 < \infty$, and hence the sequence $\left\{ \norm{\x^{k+1} -\x^k}\right\}$ converges to zero.
		\item[(\emph{iii})]
		$
		\min_{0\leq k\leq N} \norm{\x^{k+1} - \x^k}^2 \leq \frac{2(f(\x^0) - f^*)}{(N+1) LM}, \forall N\geq 0.
		$
	\end{itemize}
\end{prop}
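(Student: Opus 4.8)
The plan is to derive all three claims from the one-step descent estimate in Lemma~\ref{lemma:decrease} together with nonexpansiveness of the Euclidean projection onto $\reals^n_+$, and then telescope.

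First I would convert the bound on $\norm{\d_b^k}^2$ in \eqref{lemma:decrease eq inequality} into a bound stated in terms of $\norm{\x^{k+1}-\x^k}^2$. Since Algorithm~\ref{alg:B2B} updates only the single selected block $b$ at iteration $k$, we have $\x^{k+1}-\x^k=\x_b^{k+1}-\x_b^k=[\x_b^k+\alpha^k\d_b^k]_+ - [\x_b^k]_+$, the last equality using $\x_b^k\geq 0$. As $\x\mapsto[\x]_+$ is the orthogonal projection onto a convex set and hence $1$-Lipschitz, this gives $\norm{\x^{k+1}-\x^k}\leq \alpha^k\norm{\d_b^k}$, i.e. $\norm{\d_b^k}^2\geq \norm{\x^{k+1}-\x^k}^2/(\alpha^k)^2$. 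Substituting into \eqref{lemma:decrease eq inequality} and simplifying the factor yields
\begin{align*}
f(\x^{k+1})\leq f(\x^k)-\frac12\left(\frac{(1+\beta)m_b}{\alpha^k}-L_bM_b\right)\norm{\x^{k+1}-\x^k}^2 .
\end{align*}
I would then check the parenthesised coefficient is at least $LM$: with $\alpha^k=\alpha\leq\frac{(1+\beta)m}{2LM}$ and $m_b\geq m$ we get $\frac{(1+\beta)m_b}{\alpha}\geq 2LM\,m_b/m\geq 2LM$, while $L_bM_b\leq LM$; hence the coefficient is $\geq 2LM-LM=LM$. This gives part~(\emph{i}), and monotonicity of $\{f(\x^k)\}$ is immediate since the subtracted term is nonnegative.

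For part~(\emph{ii}) I would telescope: summing part~(\emph{i}) over $k=0,\dots,N$ gives $\frac{LM}{2}\sum_{k=0}^N\norm{\x^{k+1}-\x^k}^2\leq f(\x^0)-f(\x^{N+1})\leq f(\x^0)-f^*$, where Assumption~\ref{assump:lower bounded}(\emph{iii}) (in the form $f^*=\inf_{\x\geq0}f(\x)>-\infty$, valid since the iterates stay in $\reals^n_+$) bounds the right side. Letting $N\to\infty$ yields $\sum_k\norm{\x^{k+1}-\x^k}^2<\infty$ and hence $\norm{\x^{k+1}-\x^k}\to 0$. Part~(\emph{iii}) then follows by the standard averaging argument, $\min_{0\leq k\leq N}\norm{\x^{k+1}-\x^k}^2\leq\frac{1}{N+1}\sum_{k=0}^N\norm{\x^{k+1}-\x^k}^2\leq\frac{2(f(\x^0)-f^*)}{(N+1)LM}$.

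The only genuine obstacle is the first step: extracting a clean, iterate-based descent inequality from \eqref{lemma:decrease eq inequality}. This needs (a) noting that only one block moves, so $\norm{\x^{k+1}-\x^k}$ equals the block increment; (b) using projection nonexpansiveness in the correct direction to lower-bound $\norm{\d_b^k}^2$; and (c) carefully pushing the $\max/\min$ definitions $L=\max_b L_b$, $m=\min_b m_b$, $M=\max_b M_b$ and the stepsize cap through the constants so that the uniform constant $LM/2$ survives. After that, parts (\emph{ii}) and (\emph{iii}) are routine telescoping and averaging.
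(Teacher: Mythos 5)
Your proposal is correct and follows essentially the same route as the paper: start from the descent estimate of Lemma~\ref{lemma:decrease}, convert $\norm{\d_b^k}^2$ into $\norm{\x^{k+1}-\x^k}^2/(\alpha^k)^2$, absorb the stepsize cap into the constant $LM/2$, and then telescope and average for parts (\emph{ii}) and (\emph{iii}). The only cosmetic difference is that you justify $\norm{\x^{k+1}-\x^k}\leq\alpha^k\norm{\d_b^k}$ via nonexpansiveness of $[\cdot]_+$, whereas the paper invokes the identity $\x_b^{k+1}=\x_b^k+\alpha^k\overline{\d}_b^k$ together with $\norm{\overline{\d}_b^k}\leq\norm{\d_b^k}$ from \eqref{lemma:descent eq3}; both yield the same inequality.
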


To establish the convergence rate of $\{\x^k\}$, the main idea is to show $\nabla^P f(\x^k)$ (or $\partial F(\x^k)$) can be upper bounded in each iteration, and those upper bounds converge to zero. Proposition~\ref{prop:subsequence}(ii) implies $\{\norm{\x^{k+1} - \x^k}\}$ converges to zero. As we discussed in the previous section, however, $\norm{\x^{k+1} - \x^k}$ cannot be used to bound $\nabla^P f(\x^k)$ to obtain an asymptotic convergence rate, because Algorithm~\ref{alg:B2B} only selects one block at a time, while all blocks are required to satisfy the conditions in Lemma~\ref{lemma:critical point} (\emph{iii}). The result is not easy to obtained, even we use the cyclic rule since B2B method uses two different reference functions in \eqref{opt:two metric mapping}-\eqref{opt:two metric projection}. In the following, we show the convergence results for B2B by leveraging the greedy and randomized rules, which can further improve the convergence rate of B2B with the cyclic rule by one order in terms of the number of blocks.

\subsection{Greedy and randomized rule}
Without loss generality, we assume that each bock only contains valid coordinates. For the Gauss-Southwell (G-So) or greedy rule, a block is selected in the $k$-th iteration if it has the maximum magnitude of the partial gradient, \ie,
\begin{align}
b_k = \argmax_{1\leq b\leq s} \{\norm{\nabla_b f(\x^k) }\}.\label{eq:greedy}
\end{align}
The following proposition establishes the main convergence results for the \textit{greedy B2B} (GB2B) method.
\begin{theorem}[Convergence of greedy B2B]\label{prop:greedy}
	Let $\{\x^k\}$ be the sequence generated by Algorithm~\ref{alg:B2B} with greedy rule and is assumed to be bounded. Set $\alpha^k\equiv\alpha$ with $0 < \alpha \leq \frac{(1+\beta)m}{2LM}$. The following assertions hold:
	\begin{itemize}
		\item[(\emph{i})] For all $k\geq 0$, the projected gradient $\nabla^p f(\x^k)$ satisfies
		$
		f(\x^{k+1})\leq f(\x^k)  - \frac{\alpha^k(1+\beta)m}{4sM}\norm{ \nabla^P f(\x^k) }^2.\label{prop:greedy eq descent}
		$
		\item[(\emph{ii})]
		$
		\min_{0\leq k\leq N} \norm{\nabla^P f(\x^k)}^2 \leq \frac{4sM\left(f(\x^0) - f^*\right)}{(N+1)\alpha(1+\beta) m},
		\forall N\geq 0.
		$
		\item[(\emph{iii})] Every limit point of $\{\x^k\}$ is a critical point.
	\end{itemize}
\end{theorem}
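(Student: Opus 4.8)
The plan is to establish the three assertions in order. Assertion~(\emph{i}) is the key per-iteration descent estimate; I would obtain it by feeding the output of Lemma~\ref{lemma:decrease} through the first-order optimality condition that defines $\d_{b_k}^k$ and then through the greedy selection rule. Assertion~(\emph{ii}) then follows from~(\emph{i}) by a routine telescoping/averaging argument, and~(\emph{iii}) follows from~(\emph{i}) together with the monotone convergence of $\{f(\x^k)\}$ and a limit-point argument.

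For~(\emph{i}), I would start from inequality \eqref{lemma:decrease eq inequality} of Lemma~\ref{lemma:decrease} specialized to the selected block $b_k$. Because the stepsize satisfies $0<\alpha\leq\frac{(1+\beta)m}{2LM}\leq\frac{(1+\beta)m_{b_k}}{2L_{b_k}M_{b_k}}$, the bracket $1-\frac{L_{b_k}M_{b_k}\alpha}{(1+\beta)m_{b_k}}$ is at least $\tfrac12$, which reduces the descent to $f(\x^{k+1})\leq f(\x^k)-\frac{\alpha(1+\beta)m}{4}\norm{\d_{b_k}^k}^2$. The next step is to lower-bound $\norm{\d_{b_k}^k}$ by the partial gradient: writing the first-order optimality condition of the subproblem \eqref{opt:two metric mapping} gives $\nabla h_{b_k}(\x_{b_k}^k+\d_{b_k}^k)-\nabla h_{b_k}(\x_{b_k}^k)=-\nabla_{b_k}f(\x^k)$, and since $\{\x^k\}$ stays in a bounded set on which $\nabla h_{b_k}$ is Lipschitz (Assumption~\ref{assume: smooth}), this yields $\norm{\nabla_{b_k}f(\x^k)}\leq M_{b_k}\norm{\d_{b_k}^k}\leq M\norm{\d_{b_k}^k}$. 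Finally, under the working assumption that each block contains only valid coordinates we have $\nabla^P_b f(\x^k)=\nabla_b f(\x^k)$ for every $b$, so the greedy choice \eqref{eq:greedy} gives $\norm{\nabla_{b_k}f(\x^k)}^2=\max_b\norm{\nabla_b f(\x^k)}^2\geq\tfrac1s\sum_{b=1}^s\norm{\nabla^P_b f(\x^k)}^2=\tfrac1s\norm{\nabla^P f(\x^k)}^2$. Chaining these three estimates yields the descent inequality of part~(\emph{i}).

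Assertion~(\emph{ii}) is routine: summing~(\emph{i}) over $k=0,\dots,N$ collapses the right-hand side to $f(\x^0)-f(\x^{N+1})\leq f(\x^0)-f^*$ (using that the iterates are feasible and $f$ is bounded below, Assumption~\ref{assump:lower bounded}), and bounding the left-hand side below by $\frac{\alpha(1+\beta)m}{4sM}(N+1)\min_{0\leq k\leq N}\norm{\nabla^P f(\x^k)}^2$ rearranges to the stated bound, which is $\bigo{\sqrt{s}/\sqrt{k}}$. For~(\emph{iii}), observe that~(\emph{i}) makes $\{f(\x^k)\}$ nonincreasing and bounded below, hence convergent, so $f(\x^k)-f(\x^{k+1})\to 0$; substituting back into~(\emph{i}) forces $\norm{\nabla^P f(\x^k)}\to 0$ for the whole sequence. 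If $\x^*$ is any limit point, say $\x^{k_j}\to\x^*$, a coordinatewise argument then gives $\nabla^P f(\x^*)=0$: for coordinates with $\x^*_i>0$ one has $\x^{k_j}_i>0$ eventually and passes to the limit using continuity of $\nabla f$; for coordinates with $\x^*_i=0$ one splits into the cases ``$\x^{k_j}_i>0$ infinitely often'' and ``$\x^{k_j}_i=0$ eventually'' and uses continuity of $\nabla f$ and of $t\mapsto\min\{0,t\}$. Lemma~\ref{lemma:critical point}(\emph{i}) then identifies $\x^*$ as a critical point of $F$.

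I expect the main obstacle to be the two ``gluing'' steps in~(\emph{i}), namely converting the distance-moved $\norm{\d_{b_k}^k}$ supplied by Lemma~\ref{lemma:decrease} into the genuine optimality measure $\norm{\nabla^P f(\x^k)}$. The first half---relating $\norm{\d_{b_k}^k}$ to $\norm{\nabla_{b_k}f(\x^k)}$ via the optimality condition of \eqref{opt:two metric mapping}---is where the smoothness of the reference function $h$ (Assumption~\ref{assume: smooth}) is unavoidable, and the second half---relating $\norm{\nabla_{b_k}f(\x^k)}$ to the full $\norm{\nabla^P f(\x^k)}$---is where the greedy rule buys the factor $1/s$ that produces the advertised $\sqrt{s}$ improvement over the cyclic variant. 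A secondary technical nuisance is the discontinuity of $\x\mapsto\nabla^P f(\x)$ on the boundary of $\reals^n_+$, which is why the passage to the limit in~(\emph{iii}) must be carried out coordinatewise with the above case split rather than by a direct continuity argument.
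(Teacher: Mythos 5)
Your proposal follows essentially the same route as the paper's proof: Lemma~\ref{lemma:decrease} supplies the descent in $\norm{\d_{b_k}^k}^2$, the optimality condition of \eqref{opt:two metric mapping} together with the $M$-smoothness of $\nabla h_b$ converts this into a bound by $\norm{\nabla_{b_k} f(\x^k)}^2$, the greedy rule \eqref{eq:greedy} (with the valid-coordinate convention making $\nabla^P f$ agree with $\nabla f$ on the relevant entries) buys the $1/s$ factor, and parts (\emph{ii})--(\emph{iii}) follow by telescoping and the standard limit-point argument, exactly as in the paper. The only discrepancy is a constant: your estimate $\norm{\nabla_{b_k}f(\x^k)}\leq M\norm{\d_{b_k}^k}$ chains to a factor $1/(4sM^2)$ rather than the stated $1/(4sM)$, but the paper's own intermediate inequality $\norm{\d_b^k}^2\geq \frac{1}{M}\norm{\nabla_b f(\x^k)}^2$ contains the same slip, so this reflects a typo shared with the source rather than a flaw in your reasoning.
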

From Theorem~\ref{prop:greedy} (\emph{ii}), it immediately follows that $\norm{\nabla^P f(\x^k)}$ converges to zero at the rate of $\bigo{\sqrt{s}/\sqrt{k}}$.

In the randomized rule, a block is selected uniformly at random. We use $\E_{b_k}$ to denote the expectation with respect to a single random index $b_k$. We use $\E$ to denote the expectation with respect to all random variables $\{b_0, b_1, \cdots\}$. The following proposition establishes the main convergence result for the \textit{randomized variant of B2B} (RB2B).

\begin{theorem}[Convergence of randomized B2B]\label{prop:randomized}
	Let $\{\x^k\}$ be the sequence generated by Algorithm~\ref{alg:B2B} with randomized rule and is assumed to be bounded. Set $\alpha^k=\alpha$, where $0 < \alpha \leq \frac{(1+\beta)m}{2LM}$. The following assertions hold:
	\begin{itemize}
		\item[(\emph{i})] For all $k\geq 0$, the projected gradient $\nabla^p f(\x^k)$ satisfies
		$
		\E_{b_k}f(\x^{k+1})\leq f(\x^k)  - \frac{\alpha^k(1+\beta)m}{4sM}\norm{ \nabla^P f(\x^k) }^2
		\label{prop:randomized eq descent}
		$
		\item[(\emph{ii})]
		$
		\min_{0\leq k\leq N} \E \norm{\nabla^P f(\x^k)}^2 \leq \frac{4sM\left(f(\x^0) - f^*\right)}{(N+1)\alpha(1+\beta) m}, \; \forall N\geq0.
		$
		\item[(\emph{iii})] Every limit point of $\{\x^k\}$ is a critical point.
	\end{itemize}
\end{theorem}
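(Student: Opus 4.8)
The plan is to follow the template of Theorem~\ref{prop:greedy}, replacing the worst‑case control that the Gauss–Southwell rule \eqref{eq:greedy} gives on the selected block by an averaging identity over the uniform random draw of $b_k$. First I would instantiate Lemma~\ref{lemma:decrease} at the block $b=b_k$ actually updated in iteration $k$: it yields $f(\x^{k+1})\le f(\x^k)-\frac{\alpha(1+\beta)m_{b_k}}{2}\bigl(1-\frac{L_{b_k}M_{b_k}\alpha}{(1+\beta)m_{b_k}}\bigr)\|\d_{b_k}^k\|^2$. Since $m\le m_b$, $L\ge L_b$, $M\ge M_b$ and $0<\alpha\le\frac{(1+\beta)m}{2LM}$, the parenthesis is $\ge\tfrac12$, so that $f(\x^{k+1})\le f(\x^k)-\frac{\alpha(1+\beta)m}{4}\|\d_{b_k}^k\|^2$ holds for \emph{every} realization of $b_k$; in particular $\{f(\x^k)\}$ is surely nonincreasing, which is what the later telescoping uses.

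Next I would translate $\|\d_b^k\|$ into a bound on the partial gradient. Writing the first‑order optimality condition of \eqref{opt:two metric mapping}, namely $\nabla h(\x_b^k+\d_b^k)-\nabla h(\x_b^k)=-\nabla f_b(\x^k)$, and invoking Assumption~\ref{assume: smooth} (Lipschitz continuity of $\nabla h_b$ on the bounded set containing the iterates), $\|\d_b^k\|$ controls $\|\nabla_b f(\x^k)\|$ up to the constant $M$. Moreover, because Algorithm~\ref{alg:B2B} removes all invalid coordinates, every surviving coordinate is valid, and for a valid coordinate the projected‑gradient component coincides with the gradient component; hence $\|\nabla^P f(\x^k)\|^2=\sum_{b=1}^s\|\nabla_b f(\x^k)\|^2$. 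Combining these two facts gives $\sum_{b=1}^s\|\d_b^k\|^2\gtrsim \|\nabla^P f(\x^k)\|^2$ with the constant dictated by $M$.

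Then I would take the conditional expectation $\E_{b_k}$ given the history through iteration $k$: since $b_k$ is uniform on $\{1,\dots,s\}$, $\E_{b_k}\|\d_{b_k}^k\|^2=\tfrac1s\sum_{b=1}^s\|\d_b^k\|^2$, and plugging this into the sure descent above produces assertion~(i), $\E_{b_k}f(\x^{k+1})\le f(\x^k)-\frac{\alpha(1+\beta)m}{4sM}\|\nabla^P f(\x^k)\|^2$. For (ii) I would take total expectation, use $f\ge f^\ast$, telescope over $k=0,\dots,N$ to get $\sum_{k=0}^{N}\E\|\nabla^P f(\x^k)\|^2\le\frac{4sM(f(\x^0)-f^\ast)}{\alpha(1+\beta)m}$, and bound the average from below by the minimum, exactly as in Proposition~\ref{prop:subsequence}(iii). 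For (iii), the finiteness $\sum_{k\ge0}\E\|\nabla^P f(\x^k)\|^2<\infty$ gives, by Tonelli's theorem, $\sum_{k\ge0}\|\nabla^P f(\x^k)\|^2<\infty$ and hence $\|\nabla^P f(\x^k)\|\to0$ almost surely; then, using $\nabla^P f(\x^k)\in\partial F(\x^k)$ from Proposition~\ref{prop:subgradient equiavlent}, the closedness of the graph of $\partial F$, and $F(\x^k)=f(\x^k)$ with $f$ continuous (since $\x^k\in\reals^n_+$), any convergent subsequence $\x^{k_j}\to\bar\x$ satisfies $0\in\partial F(\bar\x)$.

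The step I expect to be the real obstacle is (iii): unlike the greedy rule, where $b_k$ carries the largest partial gradient and so $\|\d_{b_k}^k\|\to0$ already pins down $\|\nabla^P f(\x^k)\|\to0$ pathwise along the whole sequence, the randomized rule only controls the \emph{average} of $\|\d_b^k\|^2$, so one cannot conclude pathwise that the optimality gap of \emph{every} block vanishes along a given trajectory from the sure descent alone. The argument must route through summability in expectation plus a Tonelli/Fubini step (equivalently, a Robbins–Siegmund‑type supermartingale argument, standard in stochastic block‑coordinate analyses) to recover almost‑sure convergence of $\|\nabla^P f(\x^k)\|$ before the closed‑graph argument can be applied. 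Everything else — verifying the explicit constant after substituting $\alpha\le\frac{(1+\beta)m}{2LM}$, and the telescoping bookkeeping — is routine and parallels Proposition~\ref{prop:subsequence} and Theorem~\ref{prop:greedy}.
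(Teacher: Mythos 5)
Your proposal matches the paper's proof of parts (\emph{i}) and (\emph{ii}) essentially step for step: the sure descent from Lemma~\ref{lemma:decrease} applied to the sampled block, the lower bound $\norm{\d_b^k}^2\geq\frac{1}{M}\norm{\nabla_b f(\x^k)}^2$ obtained from the optimality condition of \eqref{opt:two metric mapping} together with Assumption~\ref{assume: smooth}, the uniform-averaging identity $\E_{b_k}\norm{\nabla_{b_k} f(\x^k)}^2=\frac{1}{s}\sum_{b}\norm{\nabla_b f(\x^k)}^2=\frac{1}{s}\norm{\nabla^P f(\x^k)}^2$ (valid because invalid coordinates are removed), and the telescoping. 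For part (\emph{iii}) the paper simply defers to ``repeating the second part of the proof of Proposition~\ref{prop:cyclic convergence}'', which was written for the cyclic rule where every block moves each iteration; your route through summability of $\E\norm{\nabla^P f(\x^k)}^2$, Tonelli, almost-sure vanishing of $\norm{\nabla^P f(\x^k)}$, and the closedness of $\partial F$ is precisely the extra argument needed to make that step rigorous under random block selection, so your handling of the point you flagged as the obstacle is, if anything, more careful than the paper's.
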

Similar to the GB2B method, we also obtain the convergence rate $\bigo{\sqrt{s}/\sqrt{k}}$ for the RB2B method. Both of these two methods are $\bigo{\sqrt{s}}$ times faster than the CBBCD method, \ie, $\bigo{s/\sqrt{k}}$. Moreover, the randomized variant is even more efficient than the greedy variant, since performing \eqref{eq:greedy} in GB2B needs to search all blocks to determine the desired block, while the randomized method selects a block randomly.

\subsection{Global convergence}
In this subsection, we establish the global convergence of the B2B method. For this purpose, we outline three ingredients of the methodology \cite{bolte2014proximal,bolte2018first}, which has broad range of applications.

\begin{define}\cite[Definiton~4.1]{bolte2018first}
	A sequence $\{\x^k\}$ is called a gradient-like descent sequence for $F$ if the following three conditions holds.
	\begin{itemize}
		\item[(\emph{i})] \textit{Sufficient decrease property}: There exists a scalar $\rho_1 > 0$ such that for $k\geq 0$
		\begin{align}
			\rho_1\norm{\x^{k+1} - \x^k}\leq F(\x^{k} ) - F(\x^{k+1}).
		\end{align}
		\item[(\emph{ii})] A subgradient lower bound for the iterate gap: There exists another scaler $\rho_2 > 0$ such that for $k\geq 0$
		\begin{align}
			\norm{\w^{k+1}}\leq \rho_2 \norm{\x^{k+1} - \x^k}
		\end{align}
		for some $\w^{k+1}\in \partial F(\x^{k+1})$.
		\item[(\emph{iii})] Let $\overline{\x}$ be a limit point of a subsequence $\{\x^{k_q}\}$, then $\limsup_{q\rightarrow \infty} F(\x^{k_q})\leq F(\overline{\x})$.
	\end{itemize}
\end{define}
Clearly, it follows from Proposition~\ref{prop:subsequence} (\emph{ii}) that the sufficient descent property is obtained. Combining Proposition~\ref{prop:subgradient equiavlent} with Theorem~\ref{prop:greedy} (\emph{ii}) or Theorem~\ref{prop:randomized} (\emph{ii}) implies the subgradient bound property. Since $f$ is continuously differentiable and $r$ is an indicator function of $\reals^n_+$, the third continuity condition holds trivially. Together with the Kurdyka-\L ojasiewicz (KL) property (see \cite{bolte2014proximal} and supplemental for details), we can prove the following theorem for the GB2B and RB2B methods.

\begin{theorem}[Global convergence]\label{thm:Global convergence}
	Let $\{\x^k\}$ be the sequence generated by Algorithm~\ref{alg:B2B} and is assumed to be bounded. Then the sequence $\{\x^k\}$ converges to a critical point of $f$.
\end{theorem}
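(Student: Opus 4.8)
The plan is to invoke the abstract convergence machinery for gradient-like descent sequences satisfying the Kurdyka--\L ojasiewicz (KL) property, as developed in \cite{bolte2014proximal,bolte2018first}. The strategy is to verify that $\{\x^k\}$ generated by Algorithm~\ref{alg:B2B} (with either the greedy or randomized rule) is a gradient-like descent sequence for $F = f + \delta_+$, and then to appeal to the standard KL-based argument that any bounded gradient-like descent sequence converges to a single critical point. Since each of the three defining properties has essentially already been established earlier in the paper, the bulk of the work is a careful assembly.

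First I would verify the \emph{sufficient decrease property}. By Proposition~\ref{prop:subsequence}(i), we have $f(\x^{k+1}) \le f(\x^k) - \frac{LM}{2}\norm{\x^{k+1}-\x^k}^2$, and since each iterate is feasible (by \eqref{eq:update}), $r(\x^k) = 0$ for all $k$, so $F(\x^{k+1}) \le F(\x^k) - \frac{LM}{2}\norm{\x^{k+1}-\x^k}^2$. To obtain the linear-in-$\norm{\x^{k+1}-\x^k}$ form required by the definition, one uses boundedness of $\{\x^k\}$: $\norm{\x^{k+1}-\x^k}$ is uniformly bounded by some diameter $R$, hence $\norm{\x^{k+1}-\x^k}^2 \ge \frac{1}{R}\norm{\x^{k+1}-\x^k}\cdot\norm{\x^{k+1}-\x^k} $ is not quite what we want --- more precisely, the quadratic decrease already implies summability of $\norm{\x^{k+1}-\x^k}^2$, and combined with the subgradient bound below, the KL inequality will upgrade this to summability of $\norm{\x^{k+1}-\x^k}$ itself in the usual way; alternatively one states sufficient decrease with $\rho_1\norm{\x^{k+1}-\x^k}^2$ on the left, which is the form that actually feeds the KL argument. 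Next, the \emph{subgradient lower bound}: by Proposition~\ref{prop:subgradient equiavlent}, $\nabla^P f(\x^{k}) \in \partial F(\x^k)$, and combining the per-iteration descent inequalities in Theorem~\ref{prop:greedy}(i) (respectively Theorem~\ref{prop:randomized}(i)) with Proposition~\ref{prop:subsequence}(i) gives $\norm{\nabla^P f(\x^k)}^2 \le \frac{4sM}{\alpha(1+\beta)m}\cdot\frac{2}{LM}\cdot\frac{LM}{2}\norm{\x^{k+1}-\x^k}^2$, i.e. $\norm{\w^{k+1}} \le \rho_2\norm{\x^{k+1}-\x^k}$ with $\w^{k+1}=\nabla^P f(\x^{k+1})\in\partial F(\x^{k+1})$ after shifting the index (using continuity of $\nabla f$ on the bounded set together with the fact that consecutive iterates are close). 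Finally, the \emph{continuity condition}: since $f$ is continuously differentiable and $r=\delta_+$ is the indicator of the closed set $\reals^n_+$, along any subsequence $\x^{k_q}\to\overline{\x}$ we have $\overline{\x}\in\reals^n_+$ (closedness), so $F(\x^{k_q}) = f(\x^{k_q}) \to f(\overline{\x}) = F(\overline{\x})$, giving the required $\limsup$.

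Having verified the three properties, I would then state that $F$ satisfies the KL property --- this holds because $f$ is real-analytic (or more generally semialgebraic/definable) in the NMF and related applications, and $\delta_+$ is semialgebraic, so $F$ is a KL function on its domain --- and invoke the abstract theorem (e.g.\ \cite[Theorem 6.2]{bolte2014proximal} or the analogue in \cite{bolte2018first}): any bounded gradient-like descent sequence for a KL function has finite length $\sum_k\norm{\x^{k+1}-\x^k} < \infty$ and therefore converges, necessarily to a critical point (the critical-point conclusion also following directly from Theorem~\ref{prop:greedy}(iii)/Theorem~\ref{prop:randomized}(iii) applied to the unique limit point). For the randomized case, one works along a realization of the random block sequence: on the event of full measure where the descent inequalities hold deterministically, the same argument applies path-by-path, yielding almost-sure convergence.

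The main obstacle I anticipate is the index-shift in the subgradient bound: the descent inequalities naturally relate $\norm{\nabla^P f(\x^k)}$ to $\norm{\x^{k+1}-\x^k}$, whereas the gradient-like descent definition wants $\norm{\w^{k+1}}$, i.e.\ a subgradient at $\x^{k+1}$, bounded by $\norm{\x^{k+1}-\x^k}$. Bridging these requires using local Lipschitz continuity of $\nabla f$ on the bounded set containing $\{\x^k\}$ to control $\norm{\nabla^P f(\x^{k+1})-\nabla^P f(\x^k)}$ by $\norm{\x^{k+1}-\x^k}$ (taking care that the projection operation in the definition \eqref{eq:projected gradient} is nonexpansive-compatible), and then folding this constant into $\rho_2$. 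A secondary subtlety is that only one block is updated per iteration, so the partial gradient in the non-updated blocks does not change between $\x^k$ and $\x^{k+1}$, which actually simplifies this estimate. Once these local-Lipschitz bookkeeping steps are in place, the conclusion follows from the cited abstract framework with no further difficulty.
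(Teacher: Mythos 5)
Your overall strategy is the same as the paper's: verify that $\{\x^k\}$ is a gradient-like descent sequence for $F=f+\delta_+$ (sufficient decrease, subgradient lower bound, continuity) and then run the Kurdyka--\L ojasiewicz machinery. The paper writes out the uniformized-K\L{} / concavity / finite-length argument explicitly in the supplemental rather than citing the abstract theorem of \cite{bolte2014proximal}, but that is the same mathematics, and your observations that the squared form $\rho_1\norm{\x^{k+1}-\x^k}^2\leq F(\x^k)-F(\x^{k+1})$ is the one that feeds the K\L{} argument, and that the continuity condition is trivial for $r=\delta_+$, both match what the paper actually does.

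There is, however, a genuine gap in your verification of the subgradient lower bound, and it is the one substantive step. You propose to get $\norm{\nabla^P f(\x^k)}\leq\rho_2\norm{\x^{k+1}-\x^k}$ by combining Theorem~\ref{prop:greedy}(i) with Proposition~\ref{prop:subsequence}(i), but these two inequalities point the same way: the first gives $\norm{\nabla^P f(\x^k)}^2\leq C_1\bigl(f(\x^k)-f(\x^{k+1})\bigr)$ and the second gives $f(\x^k)-f(\x^{k+1})\geq C_2\norm{\x^{k+1}-\x^k}^2$, so both quantities are merely upper-bounded by the function decrease and neither bounds the other; your displayed chain implicitly uses Proposition~\ref{prop:subsequence}(i) with the inequality reversed. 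The correct comparison must go through the search direction: \eqref{prop:greedy direction} together with the greedy rule gives $\norm{\nabla^P f(\x^k)}^2\leq sM\norm{\d_{b_k}^k}^2$, while $\x^{k+1}-\x^k=\alpha\overline{\d}_{b_k}^k$ with $\overline{\d}$ the truncation from \eqref{lemma: descent eq dhead}, so one still has to show that the coordinates discarded in $\cB^k$ contribute nothing to $\nabla^P f(\x^k)$ either (immediate when $h_b=\frac{1}{2}\norm{\cdot}^2$, where $\d=-\nabla f$ and the discarded coordinates are exactly those with $\x_i=0$ and $\nabla_i f\geq 0$, hence $\nabla^P_i f=0$; for general $h_b$ this needs an argument). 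To be fair, the paper itself asserts this property with no more justification than ``combining Proposition~\ref{prop:subgradient equiavlent} with Theorem~\ref{prop:greedy}(ii)'', so you have surfaced a difficulty the paper glosses over; and your index-shift concern (passing from a bound at $\x^k$ to a subgradient at $\x^{k+1}$ via local Lipschitz continuity of $\nabla f$) is a legitimate secondary issue, complicated by the discontinuity of $\x\mapsto\nabla^P f(\x)$ across the boundary $\x_i=0$. As written, though, the chain of inequalities you give for $\rho_2$ does not hold, and the proof is incomplete at exactly this point.
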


\section{Applications and Numerical Experiments}
\begin{table*}
	\resizebox{\textwidth}{!}{
		\centering
		\begin{tabular}{|c|| r|r|r|r|r|r|r || r|r|r|r|r|r|r|}
			\hline
			\multirow{ 2}{*}{Dataset} &	\multicolumn{7}{c||}{Time(seconds)}&\multicolumn{7}{c|}{Number of iterations}\\  \cline{2-15}
			& GCD & FastHALS &  ANLSPivot& AOADMM&APG &GB2B & RB2B  & GCD & FastHALS &  ANLSPivot & AOADMM&APG &GB2B & RB2B \\ \hline
			ORL & 5.192& 19.267& 2.031& 3.101& 5.461& \textbf{1.592}& 11.130&
			189& 1000& 21& 22& 297& 76& 542\\ \hline
			COIL & 357.959& 600.970& 165.423& 281.719& 265.440& \textbf{67.315}& 334.971&
			493& 862& 88& 72& 409& 84& 478\\ \hline
			YaleB & 90.755& 78.008& 15.324& 24.543& 34.046& \textbf{11.178}& 88.798&
			1000& 911& 68& 83&406 & 119& 1000\\ \hline \hline
			News20 & 2.791& \textbf{2.648}& 25.273& 43.997& 4.319& 3.083& 4.730&
			72& 69& 54& 75& 123& 50& 96\\ \hline
			MNIST & 14.104& 11.732& 72.078& 214.376& 15.547& \textbf{8.159}& 42.818&
			119& 115& 136& 169& 156& 66& 383\\ \hline
			TDT2 & 14.608& \textbf{3.804}& 67.467& 344.238& 6.069& 27.833& 11.739&
			120& 28& 68& 81& 52& 128&60\\ \hline
		\end{tabular}
	}
	\caption{Performance comparison for algorithms on real datasets, where the fastest algorithm is highlighted for each dataset. GB2B is in general faster than the other algorithms. RB2B is theoretically faster than GB2B in terms of computation, but it is not the case in practice.}\label{tab:result}
\end{table*}
\begin{figure*}
	\centering
	\begin{minipage}{.24\linewidth}
		\centerline{\includegraphics[width=\columnwidth]{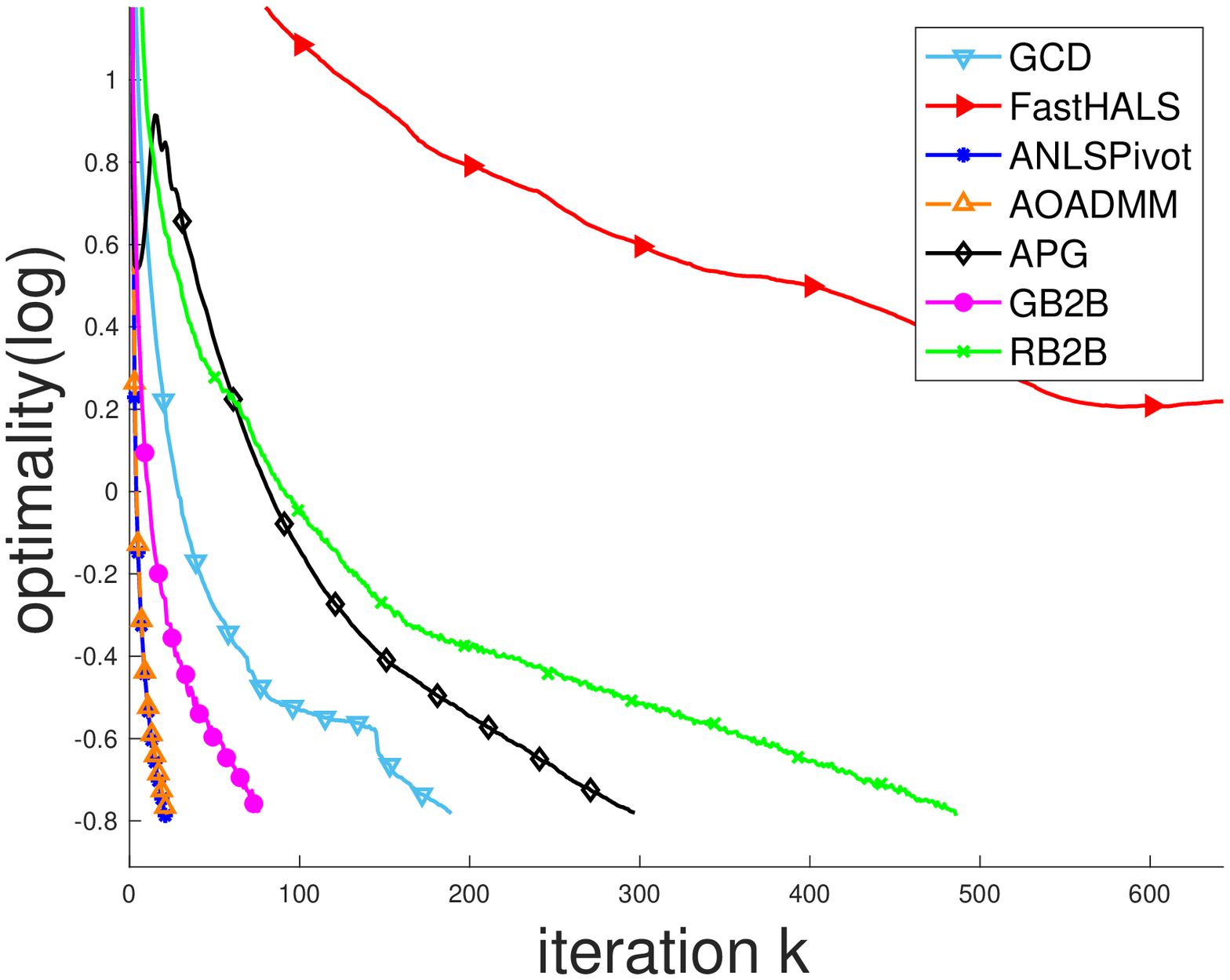}}
		\caption*{(a) Optimality versus iteration.}
	\end{minipage}
	\begin{minipage}{.24\linewidth}
		\centerline{\includegraphics[width=\columnwidth]{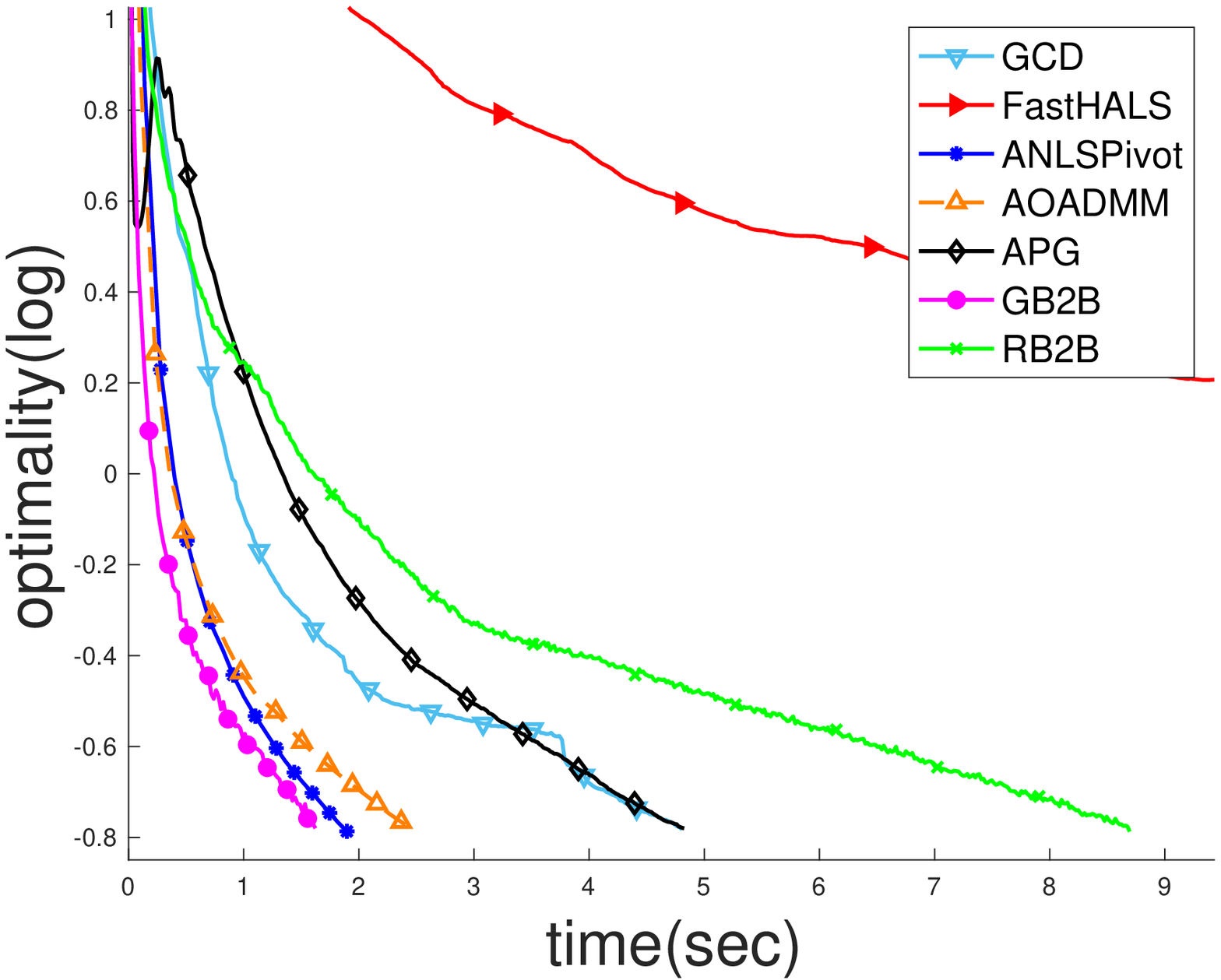}}
		\caption*{(b) Optimality versus runtime.}
	\end{minipage}
	\begin{minipage}{.24\linewidth}
		\includegraphics[width=\columnwidth]{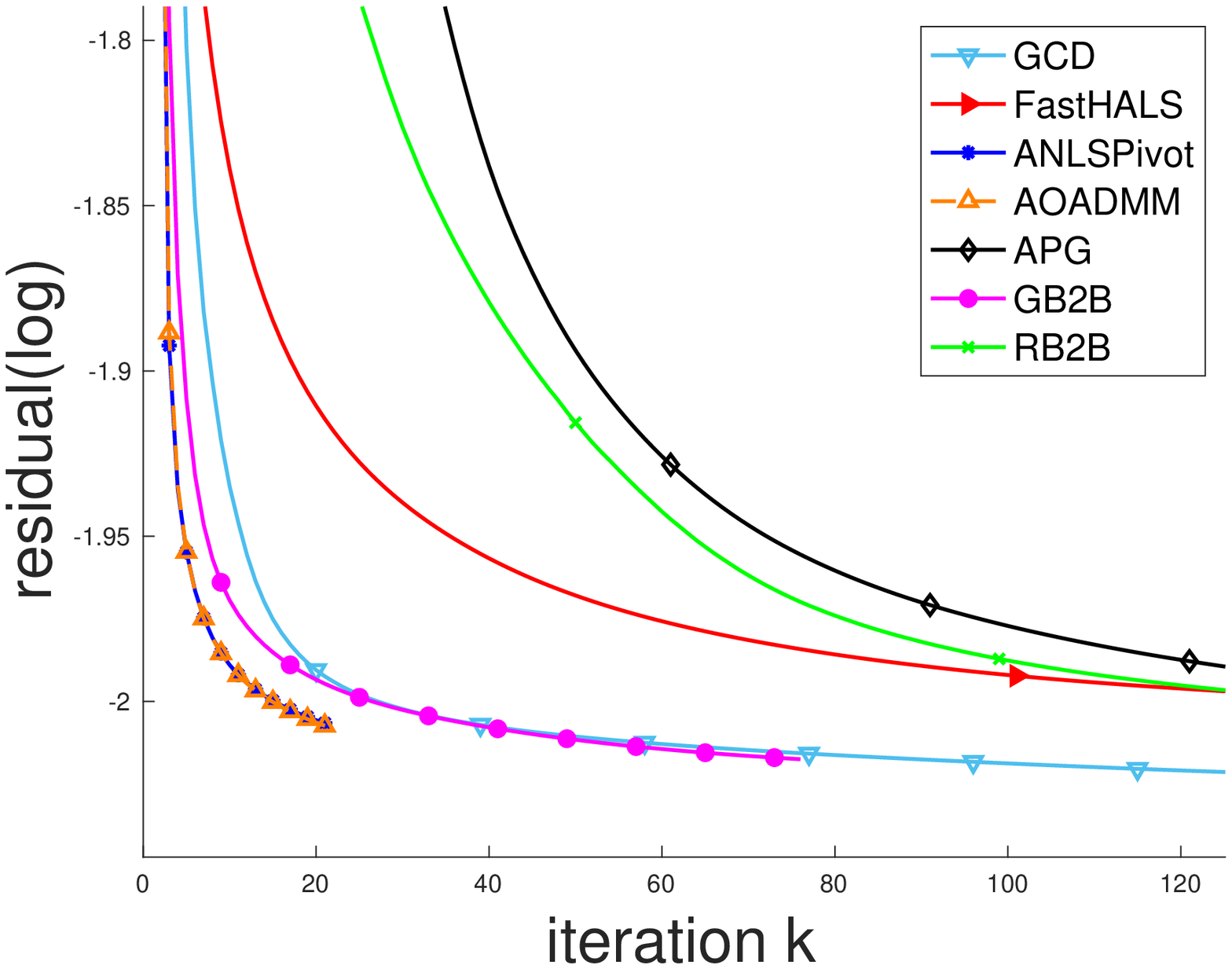}
		\caption*{(c) Residual versus iteration.}
	\end{minipage}%
	\begin{minipage}{.24\linewidth}
		\centerline{\includegraphics[width=\columnwidth]{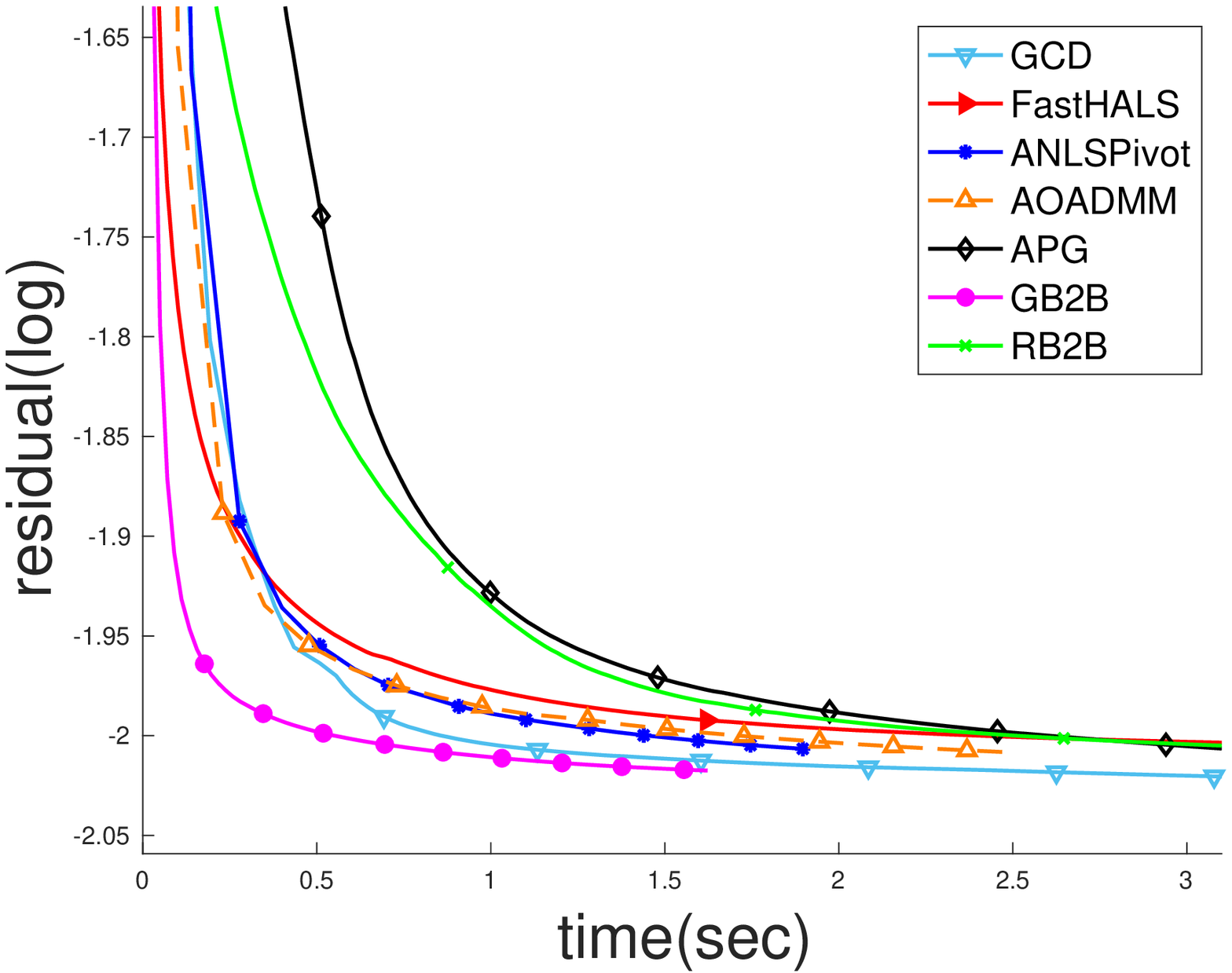}}
		\caption*{(d) Residual versus runtime.}
	\end{minipage}
	\caption{Convergence behaviors of different algorithms on \textbf{ORL}: (a)-(b) illustrate the changes in optimality versus iterations and runtime; (c)-(d) illustrate the changes in the residual versus iterations and runtime.}
	\label{fig:orl}
\end{figure*}
\begin{figure*}
	\centering
	\begin{minipage}{.24\linewidth}
		\centerline{\includegraphics[width=\columnwidth]{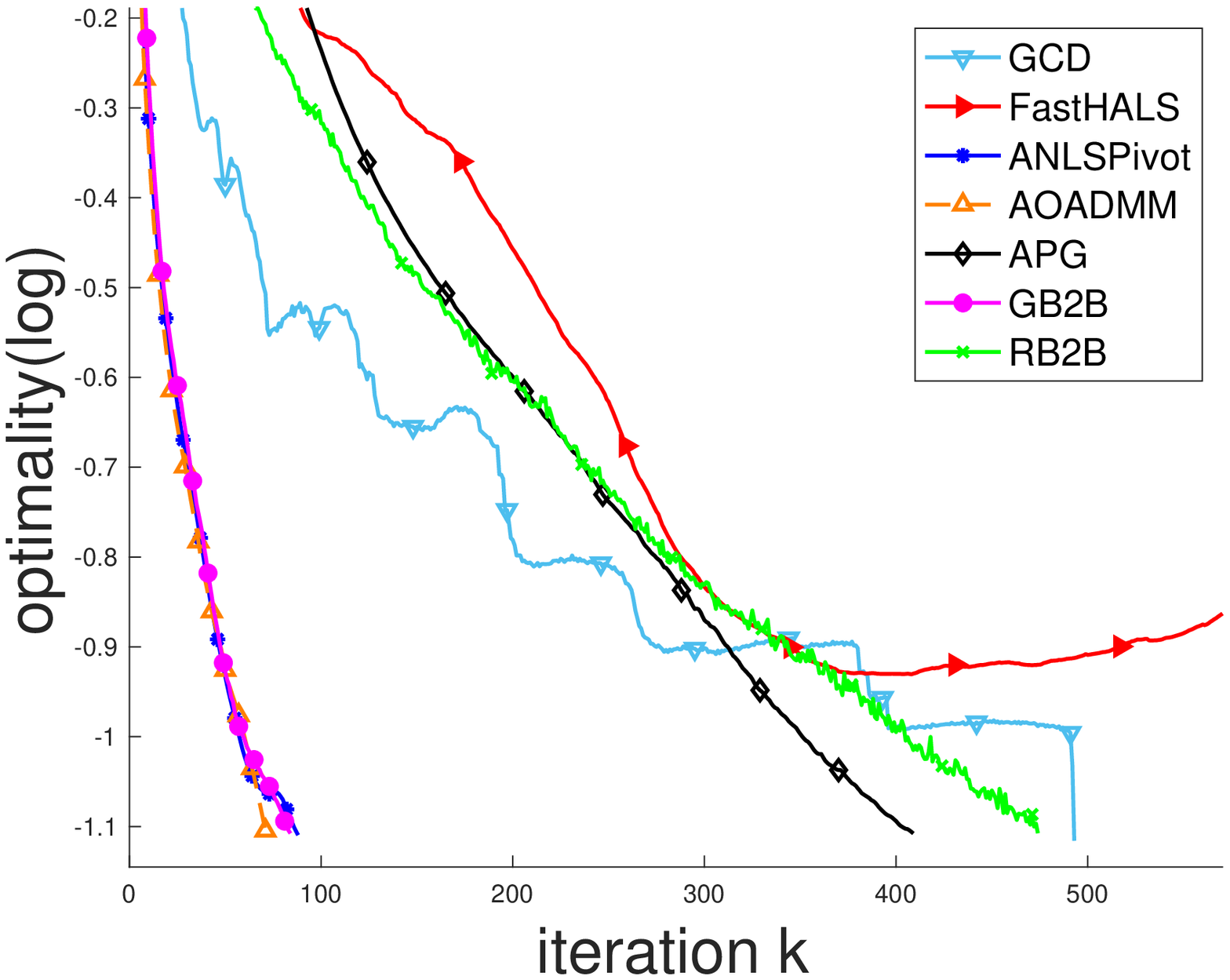}}
		\caption*{(a) Optimality versus iteration.}
	\end{minipage}
	\begin{minipage}{.24\linewidth}
		\centerline{\includegraphics[width=\columnwidth]{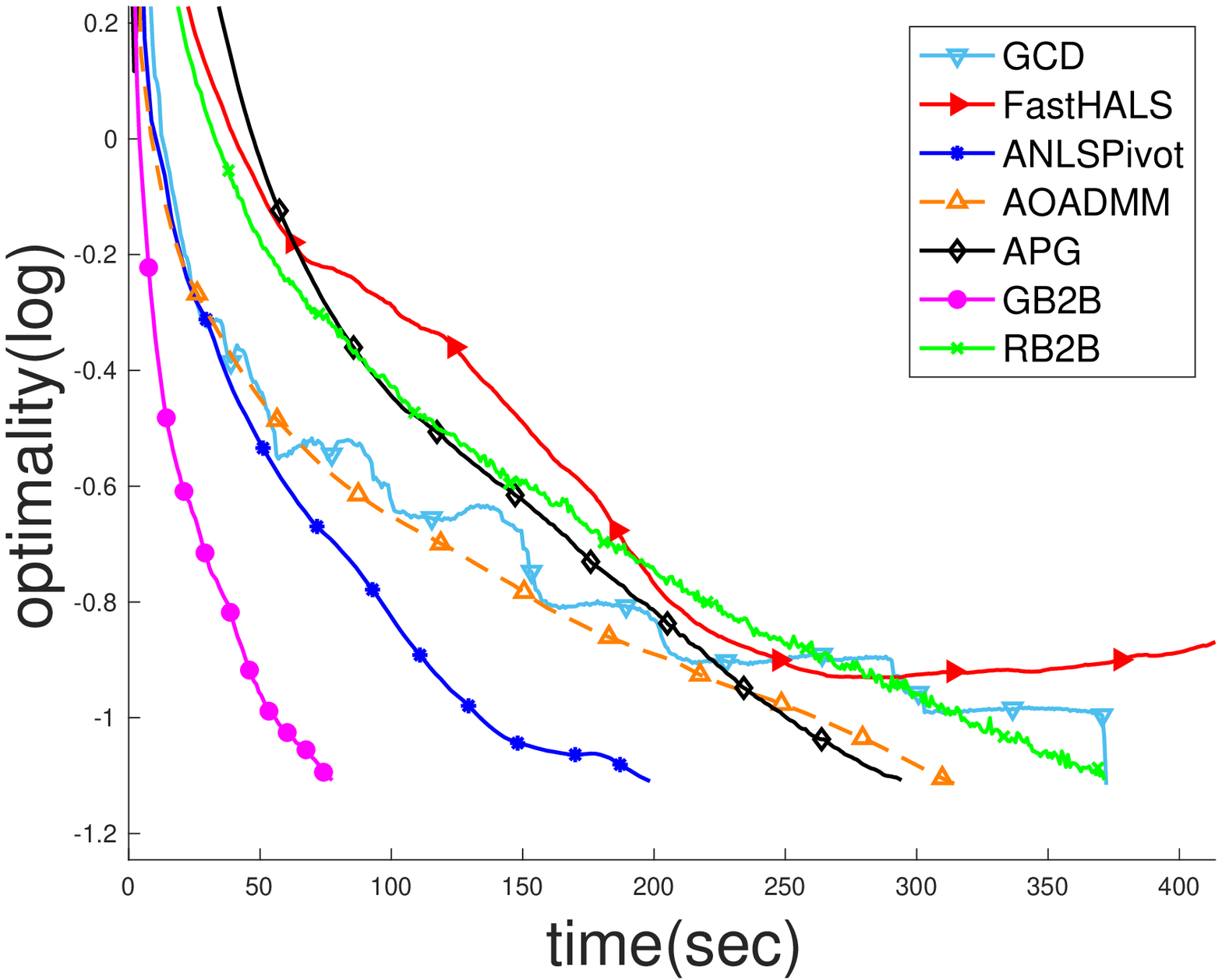}}
		\caption*{(b) Optimality versus runtime.}
	\end{minipage}
	\begin{minipage}{.24\linewidth}
		\includegraphics[width=\columnwidth]{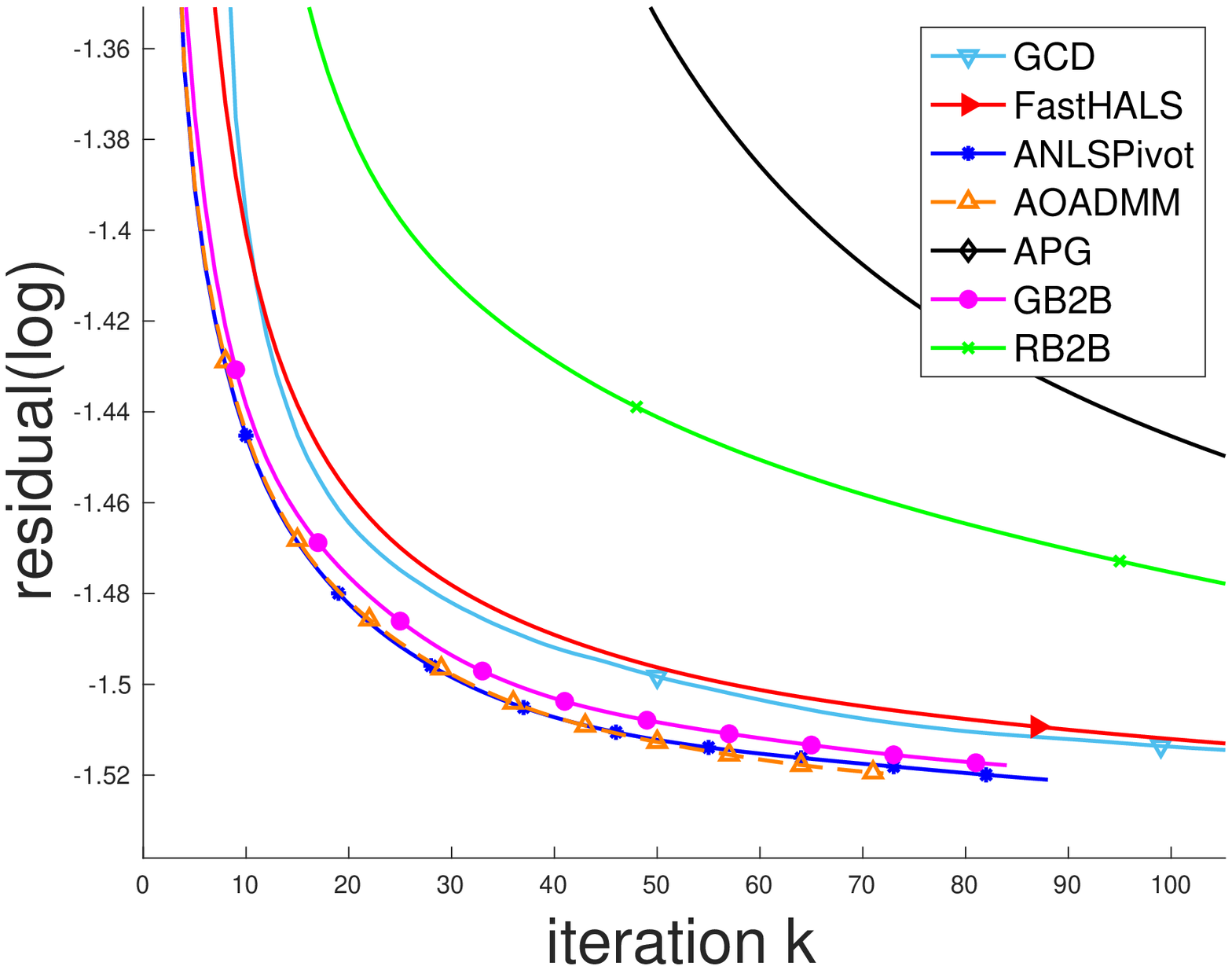}
		\caption*{(c) Residual versus iterations.}
	\end{minipage}%
	\begin{minipage}{.24\linewidth}
		\centerline{\includegraphics[width=\columnwidth]{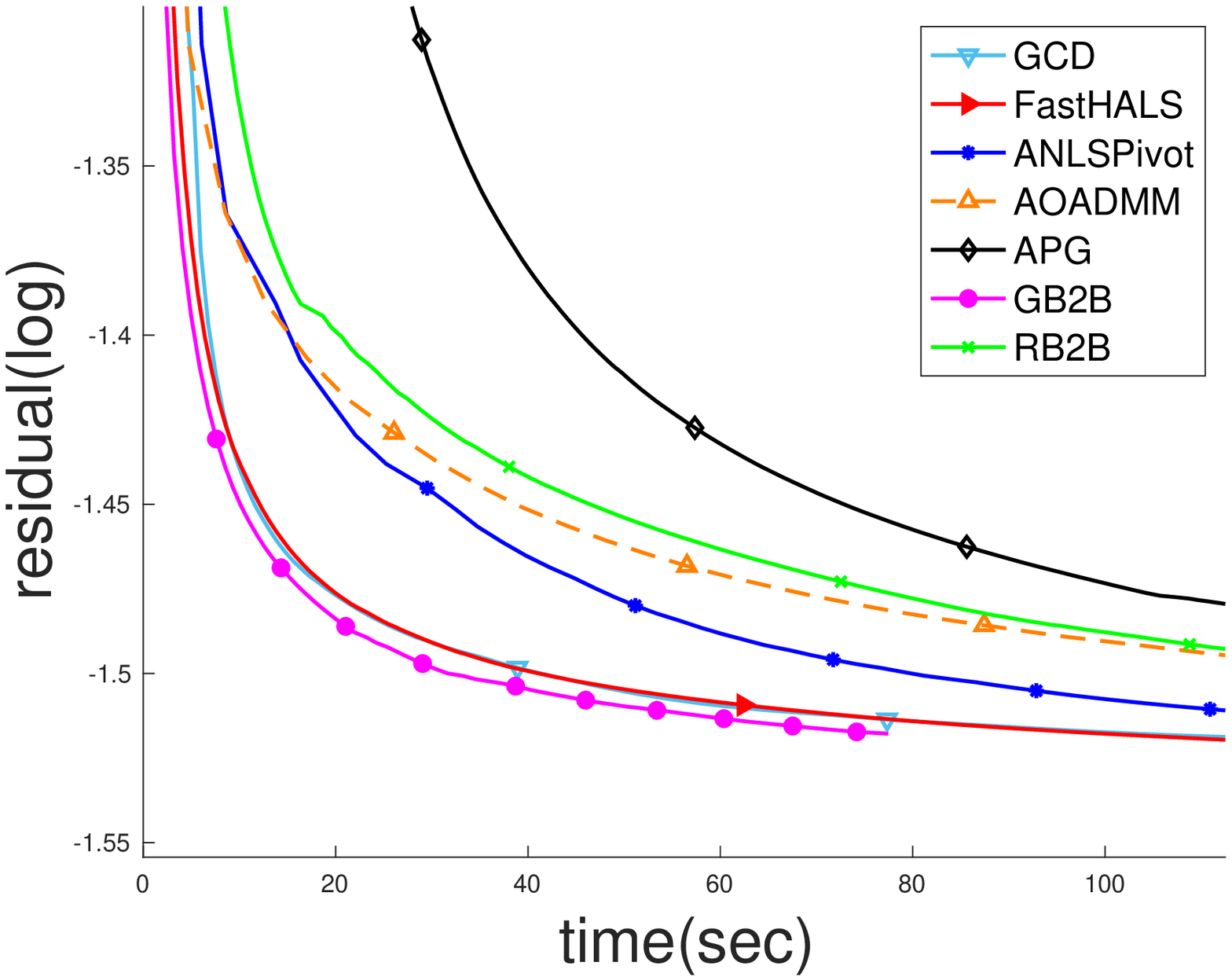}}
		\caption*{(d) Residual versus runtime.}
	\end{minipage}
	\caption{Convergence behaviors of different algorithms on \textbf{COIL}: (a)-(b) illustrate the changes in optimality versus iterations and runtime; (c)-(d) illustrate the changes in the residual versus iterations and runtime.}
	\label{fig:coil}
\end{figure*}
\begin{figure*}
	\centering
	\begin{minipage}{.24\linewidth}
		\centerline{\includegraphics[width=\columnwidth]{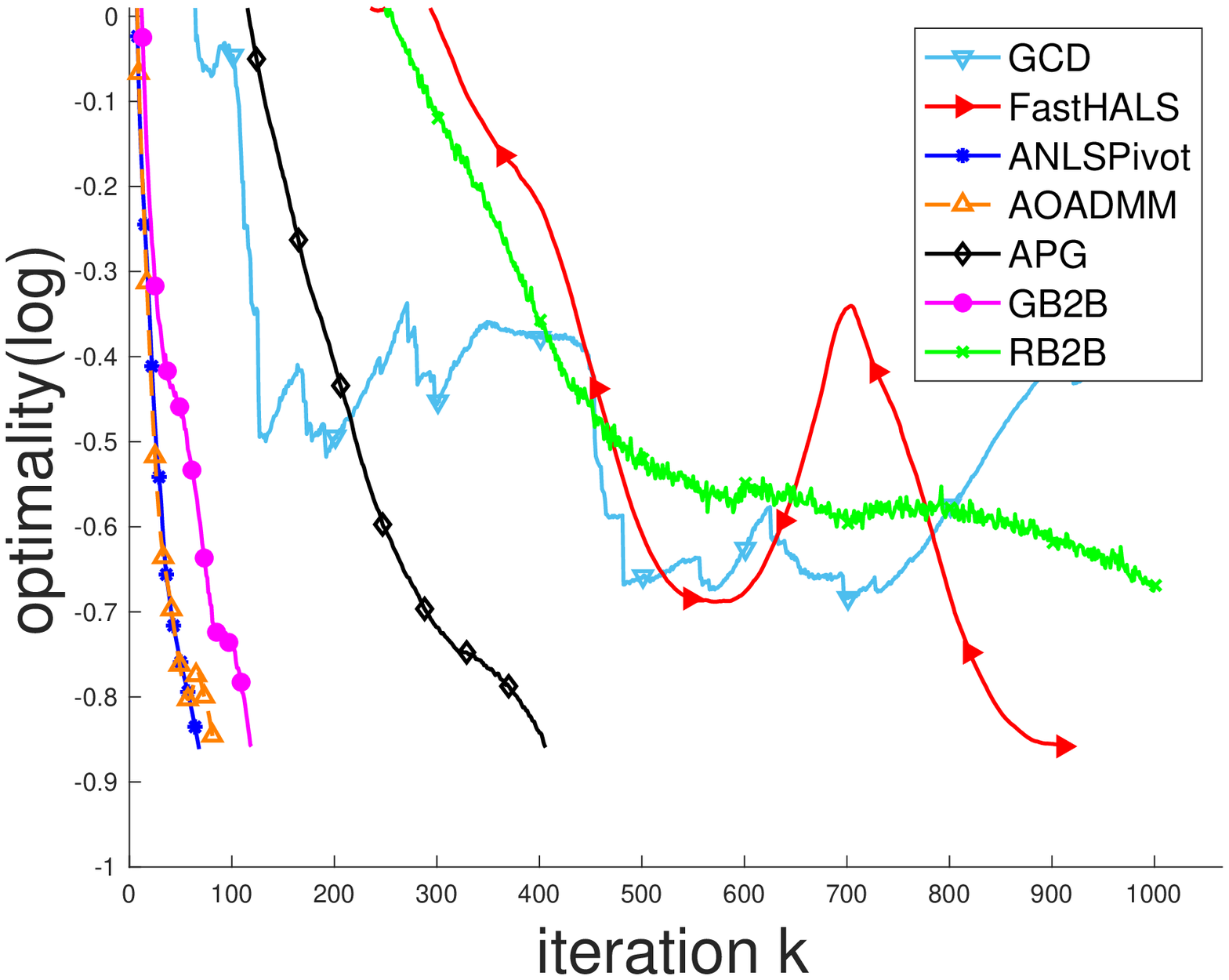}}
		\caption*{(a) Optimality versus iteration.}
	\end{minipage}
	\begin{minipage}{.24\linewidth}
		\centerline{\includegraphics[width=\columnwidth]{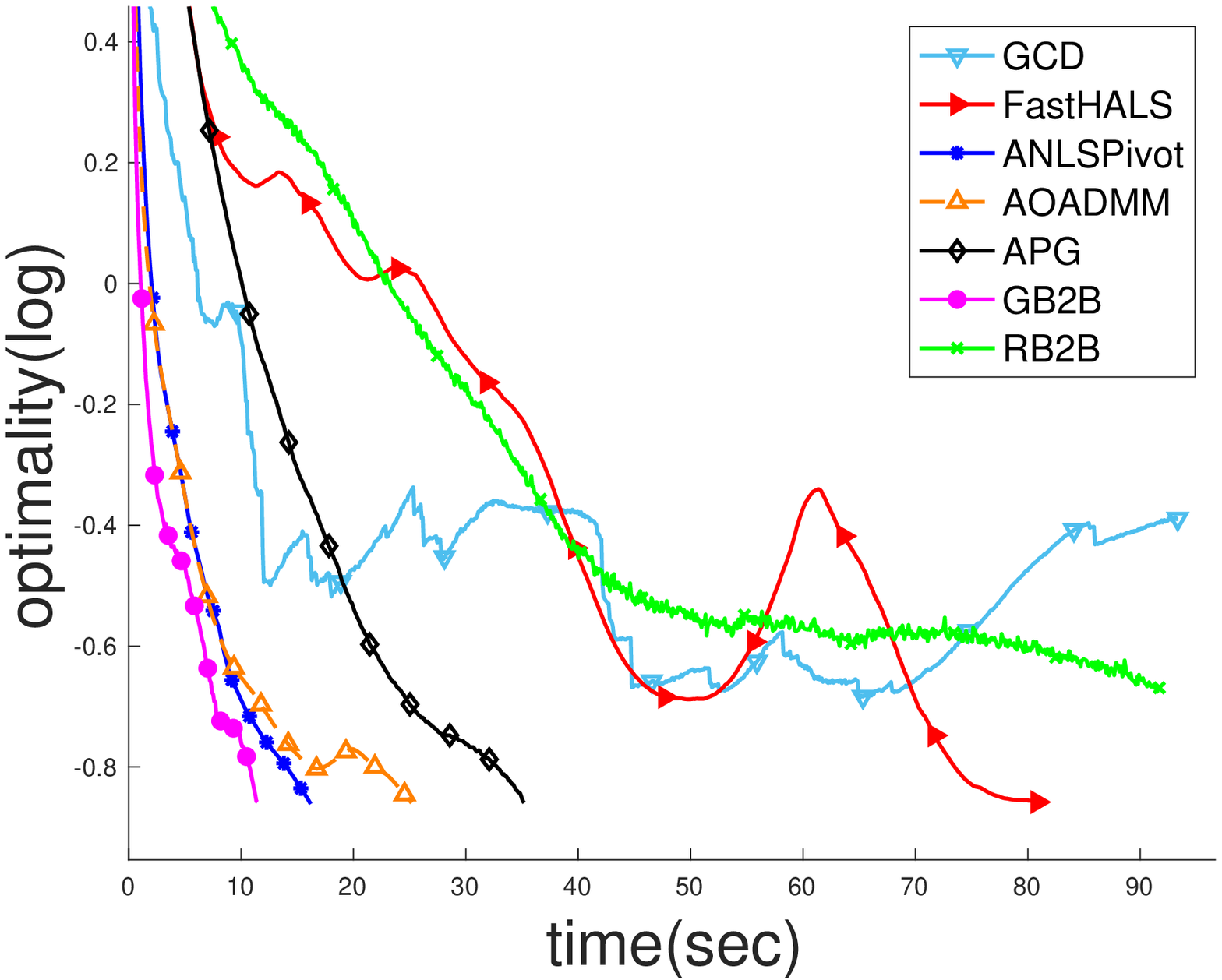}}
		\caption*{(b) Optimality versus runtime.}
	\end{minipage}
	\begin{minipage}{.24\linewidth}
		\includegraphics[width=\columnwidth]{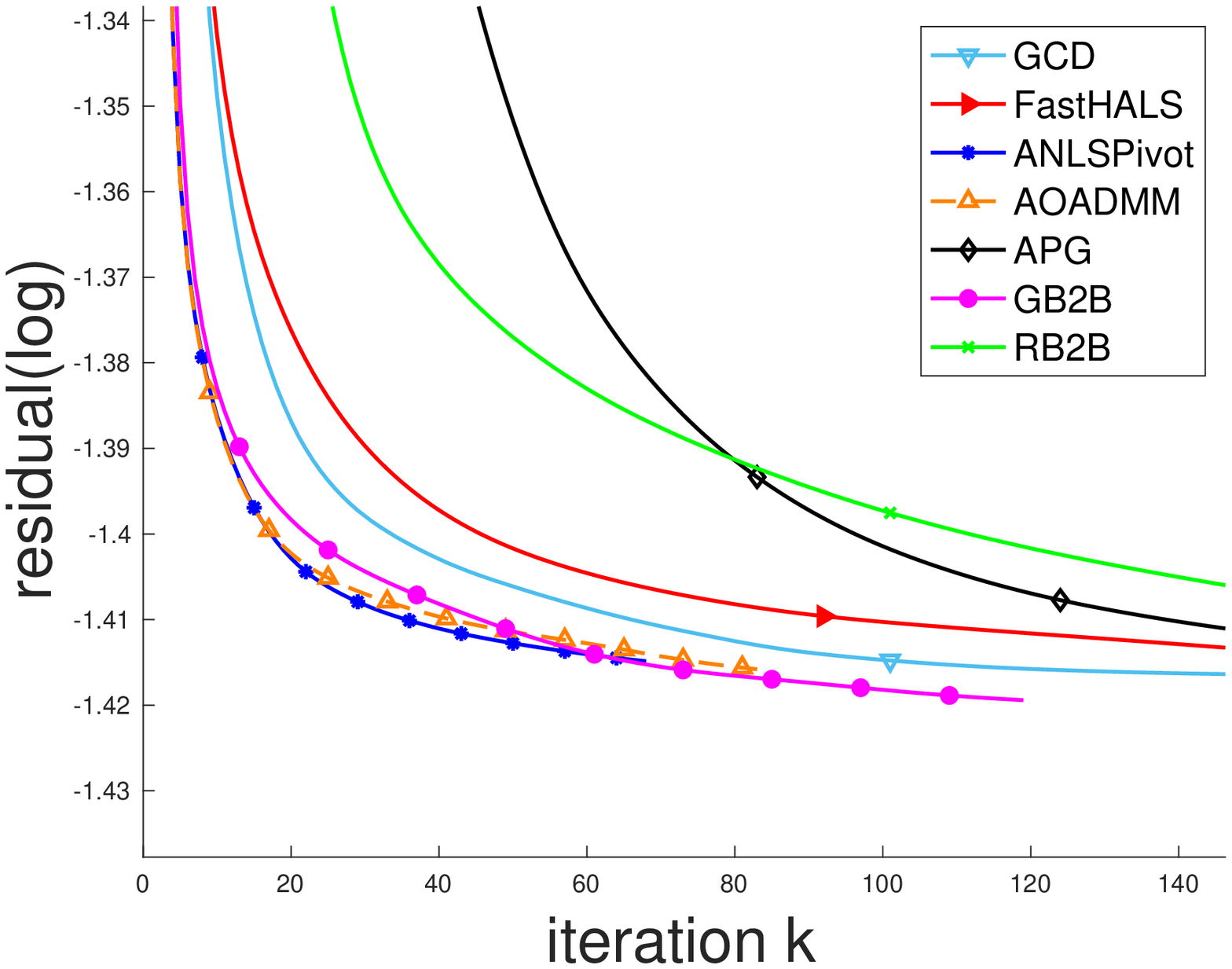}
		\caption*{(c) Residual versus iteration.}
	\end{minipage}%
	\begin{minipage}{.24\linewidth}
		\centerline{\includegraphics[width=\columnwidth]{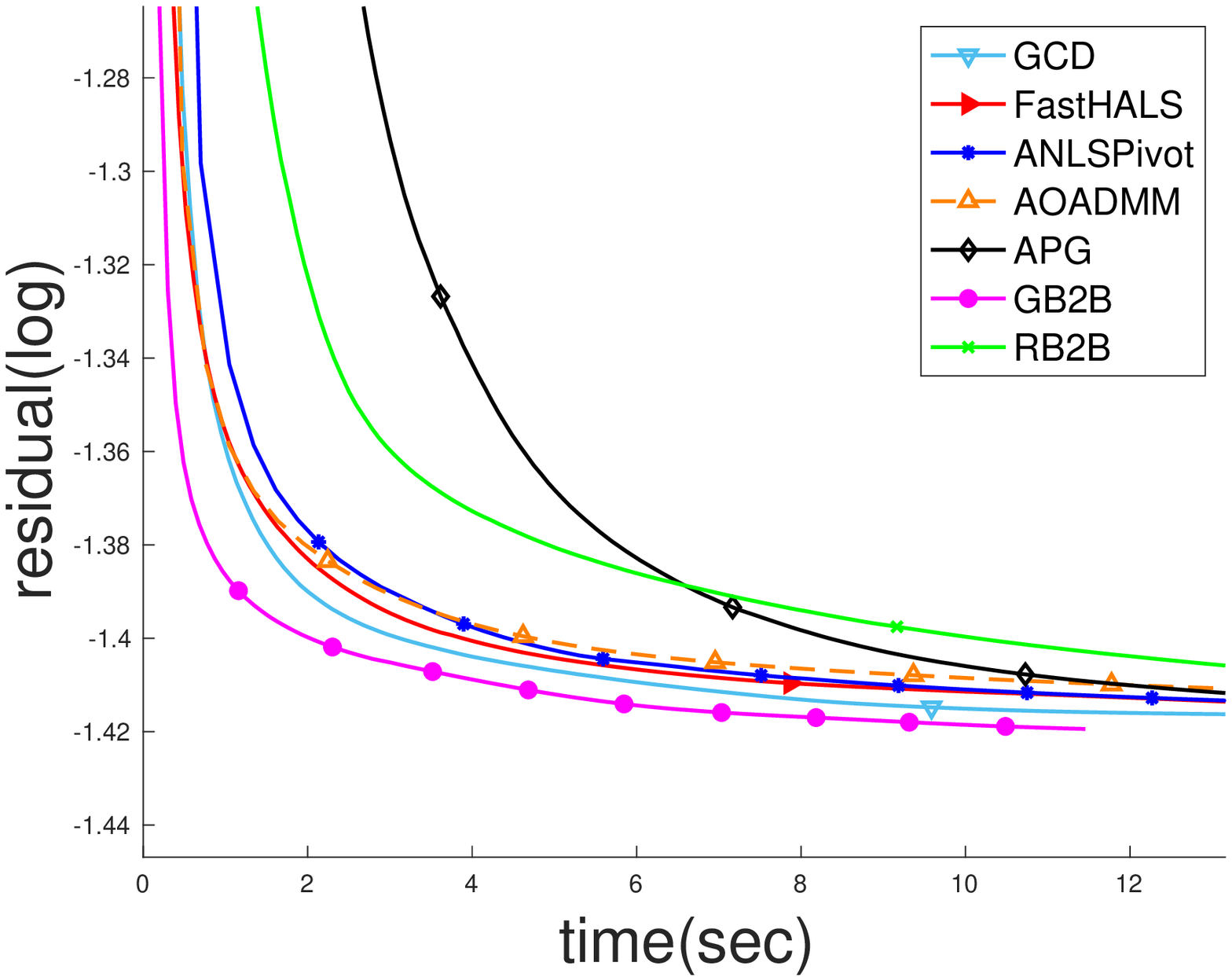}}
		\caption*{(d) Residual versus runtime.}
	\end{minipage}
	\caption{Convergence behaviors of different algorithms on \textbf{YaleB}: (a)-(b) illustrate the changes in optimality versus iterations and runtime; (c)-(d) illustrate the changes in the residual versus iterations and runtime.}
	\label{fig:yaleb}
\end{figure*}
\begin{figure*}
	\centering
	\begin{minipage}{.24\linewidth}
		\centerline{\includegraphics[width=\columnwidth]{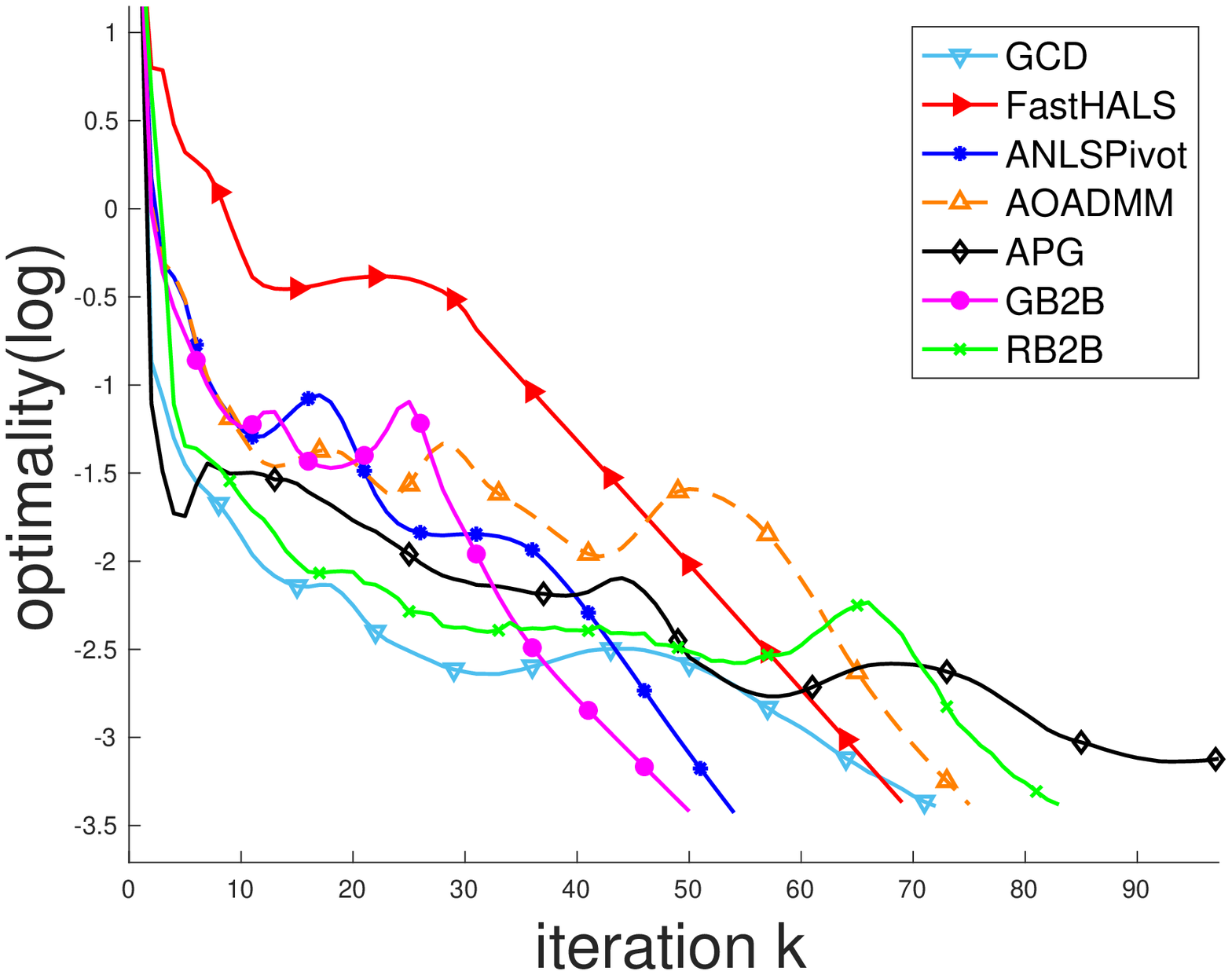}}
		\caption*{(a) Optimality versus iteration.}
	\end{minipage}
	\begin{minipage}{.24\linewidth}
		\centerline{\includegraphics[width=\columnwidth]{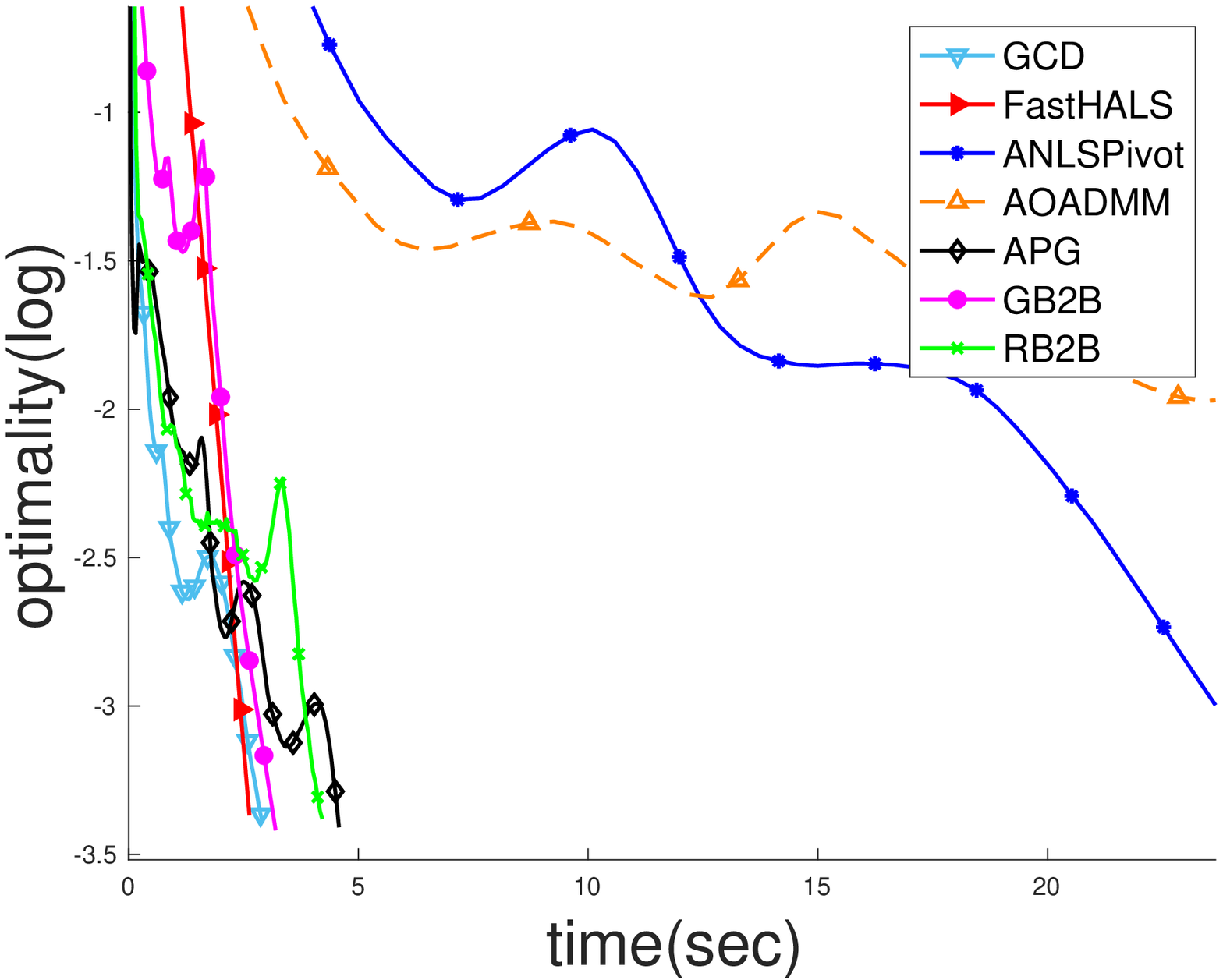}}
		\caption*{(b) Optimality versus runtime.}
	\end{minipage}
	\begin{minipage}{.24\linewidth}
		\includegraphics[width=\columnwidth]{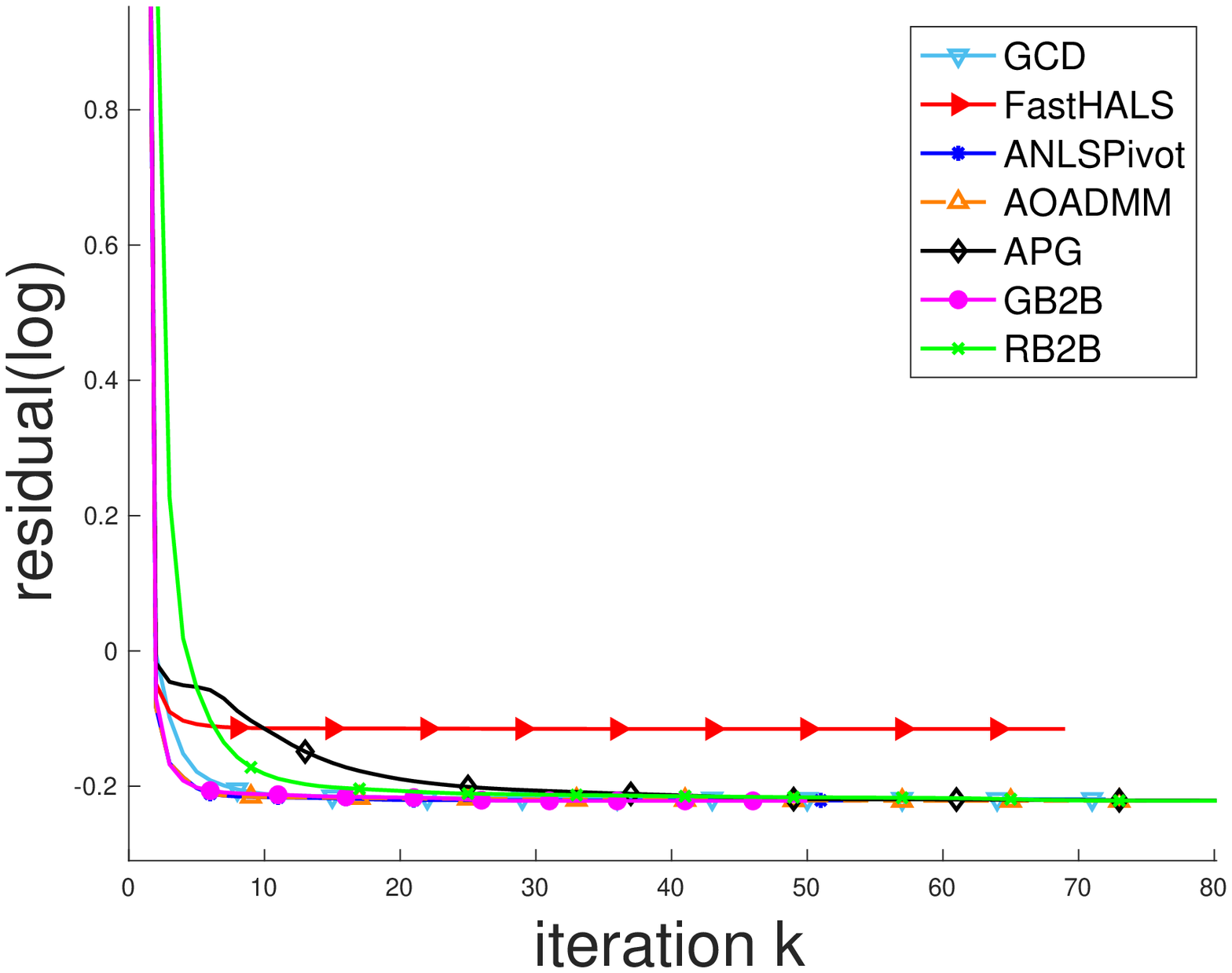}
		\caption*{(c) Residual versus iterations.}
	\end{minipage}%
	\begin{minipage}{.24\linewidth}
		\centerline{\includegraphics[width=\columnwidth]{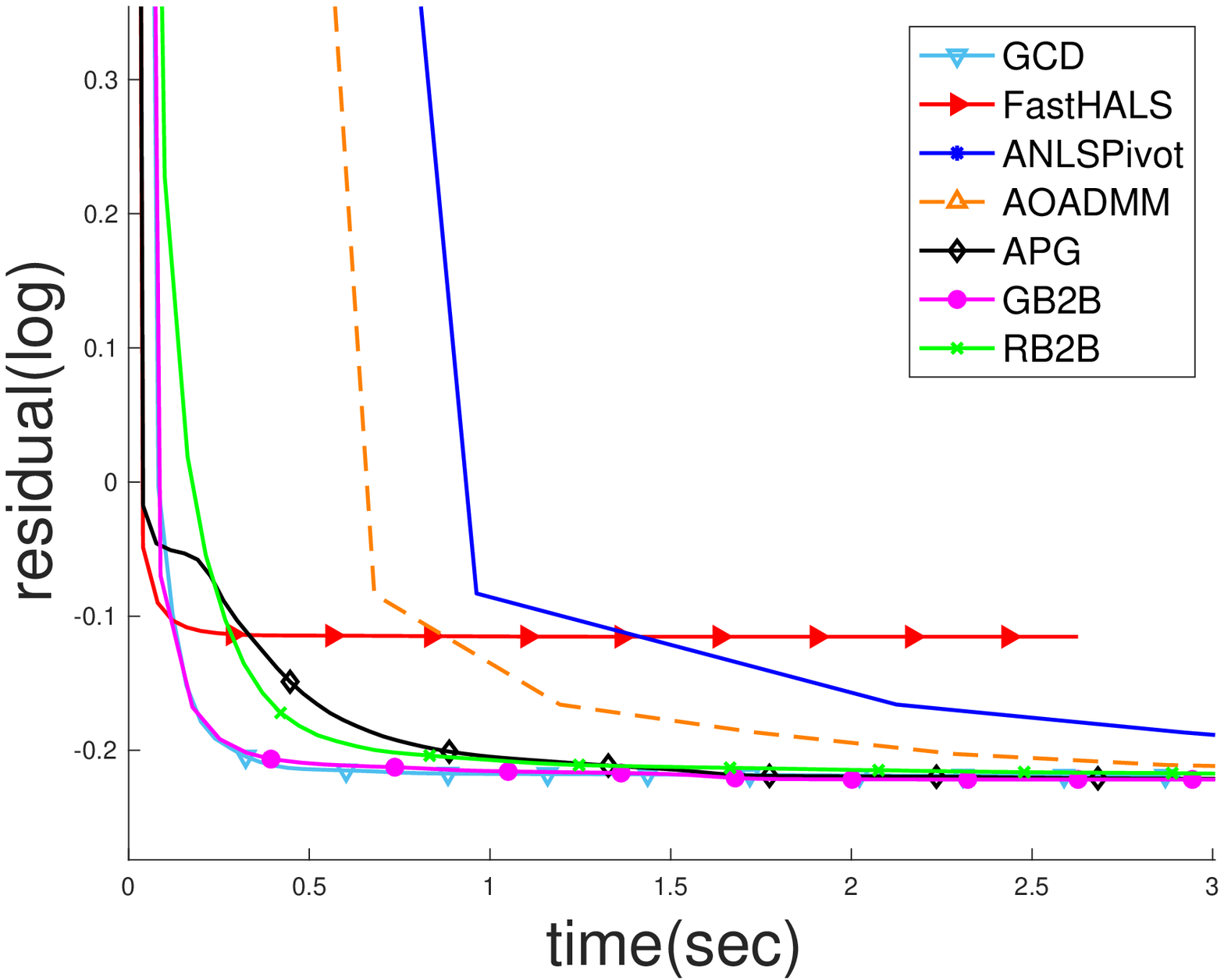}}
		\caption*{(d) Residual versus runtime.}
	\end{minipage}
	\caption{Convergence behaviors of different algorithms on \textbf{News20}: (a)-(b) illustrate the changes in optimality versus iterations and runtime; (c)-(d) illustrate the changes in the residual versus iterations and runtime.}
	\label{fig:news20}
\end{figure*}
\begin{figure*}
	\centering
	\begin{minipage}{.24\linewidth}
		\centerline{\includegraphics[width=\columnwidth]{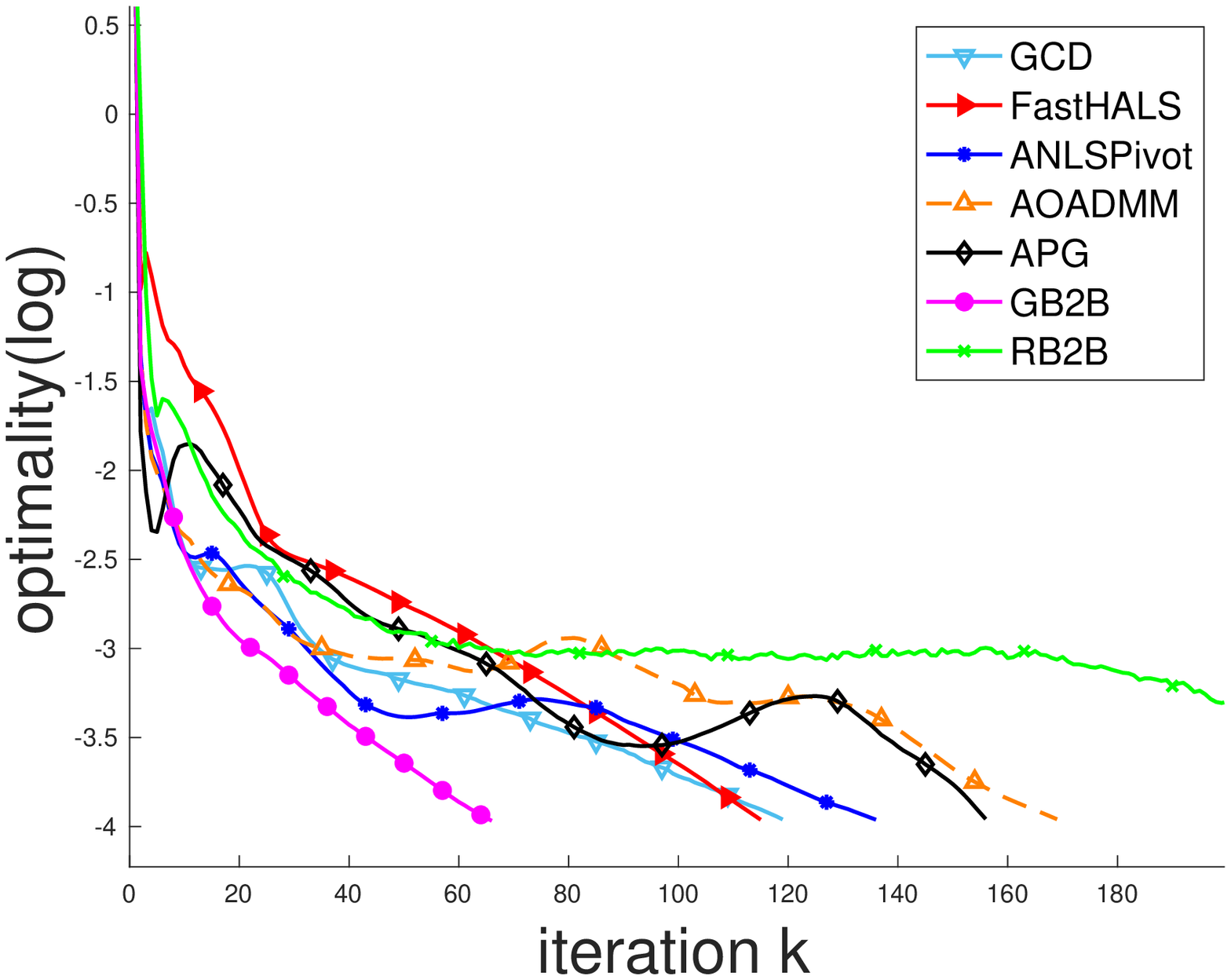}}
		\caption*{(a) Optimality versus iteration.}
	\end{minipage}
	\begin{minipage}{.24\linewidth}
		\centerline{\includegraphics[width=\columnwidth]{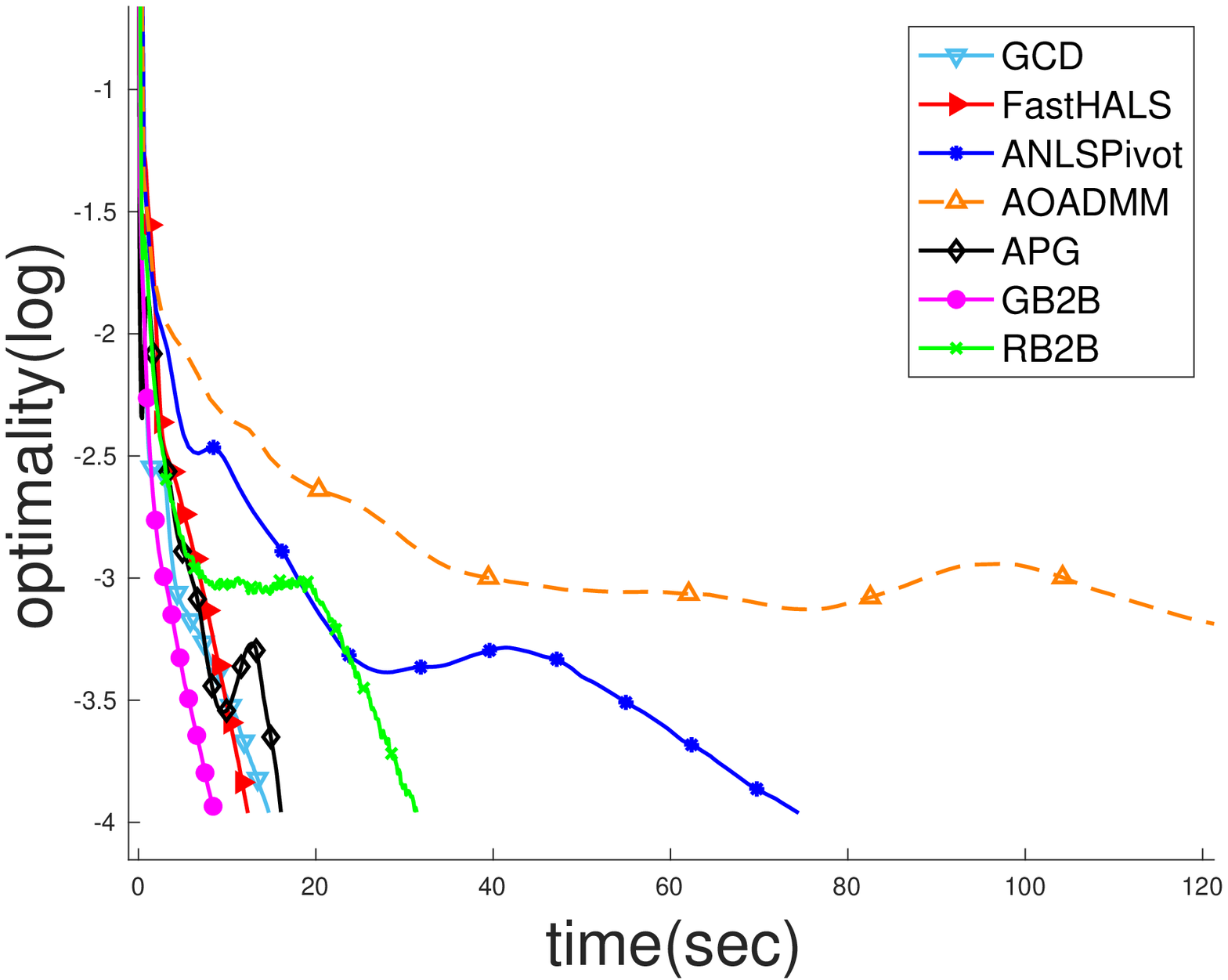}}
		\caption*{(b) Optimality versus runtime.}
	\end{minipage}
	\begin{minipage}{.24\linewidth}
		\includegraphics[width=\columnwidth]{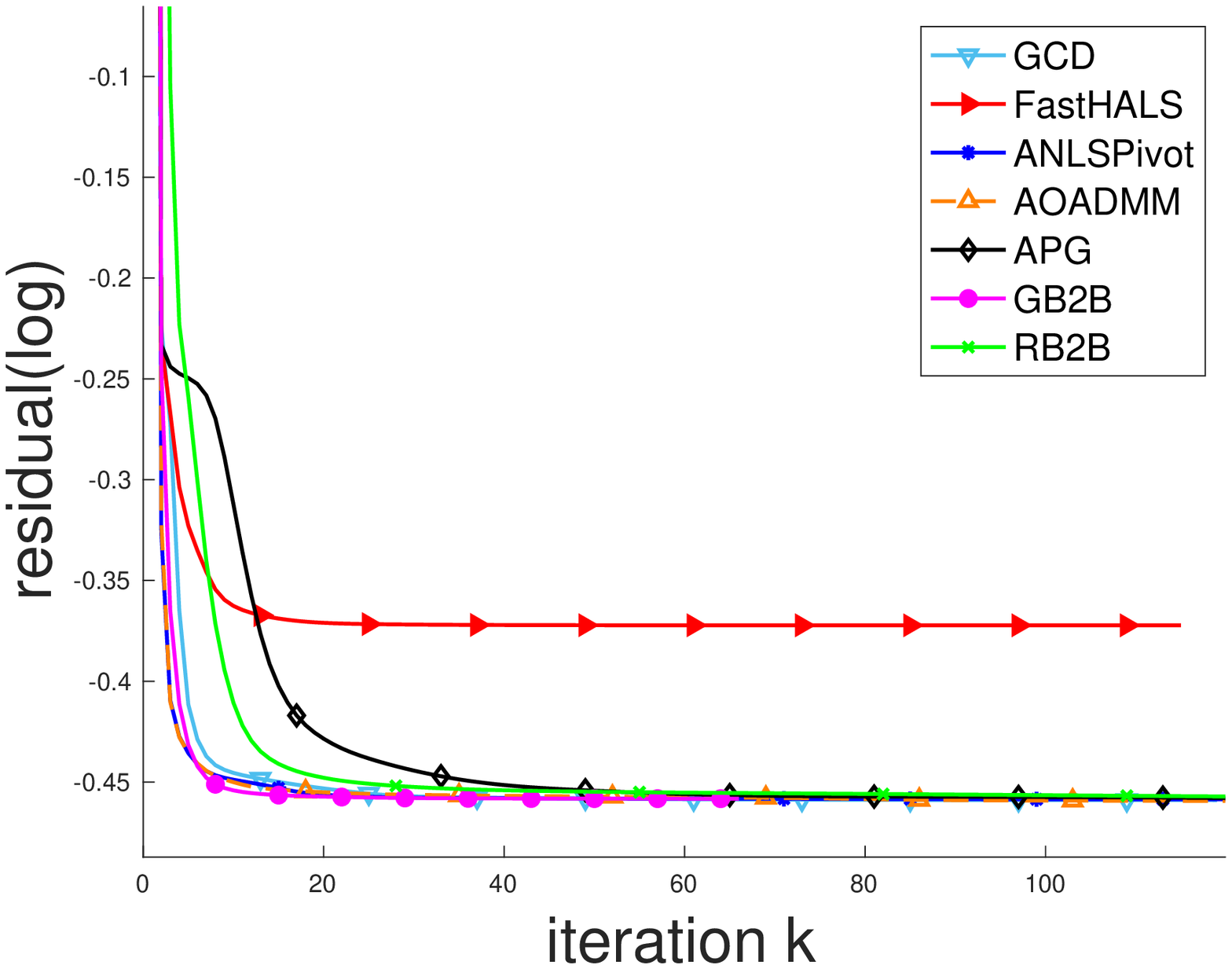}
		\caption*{(c) Residual versus iterations.}
	\end{minipage}%
	\begin{minipage}{.24\linewidth}
		\centerline{\includegraphics[width=\columnwidth]{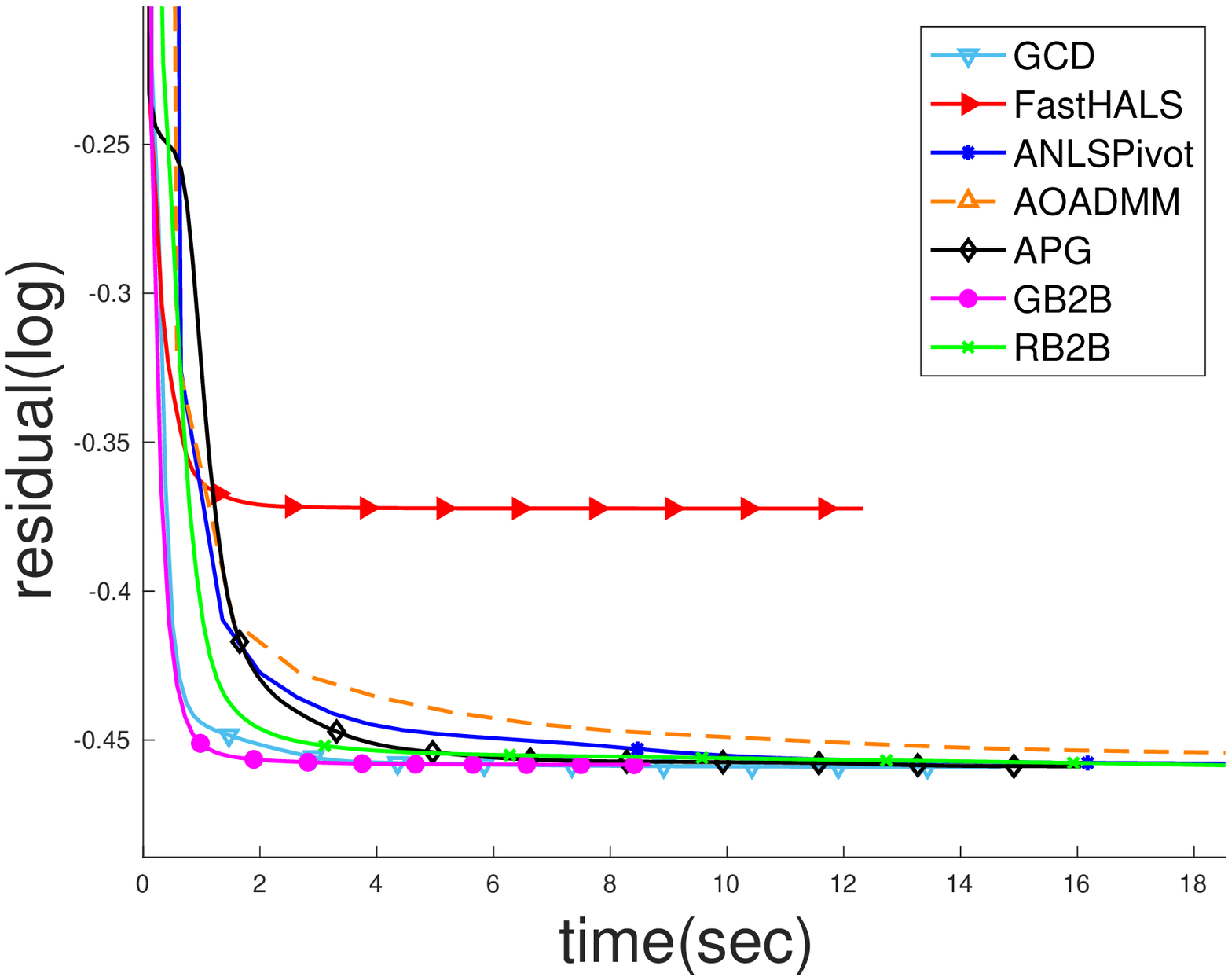}}
		\caption*{(d) Residual versus runtime.}
	\end{minipage}
	\caption{Convergence behaviors of different algorithms on \textbf{MNIST}: (a)-(b) illustrate the changes in optimality versus iterations and runtime; (c)-(d) illustrate the changes in the residual versus iterations and runtime.}
	\label{fig:mnist}
\end{figure*}
\begin{figure*}
	\centering
	\begin{minipage}{.24\linewidth}
		\centerline{\includegraphics[width=\columnwidth]{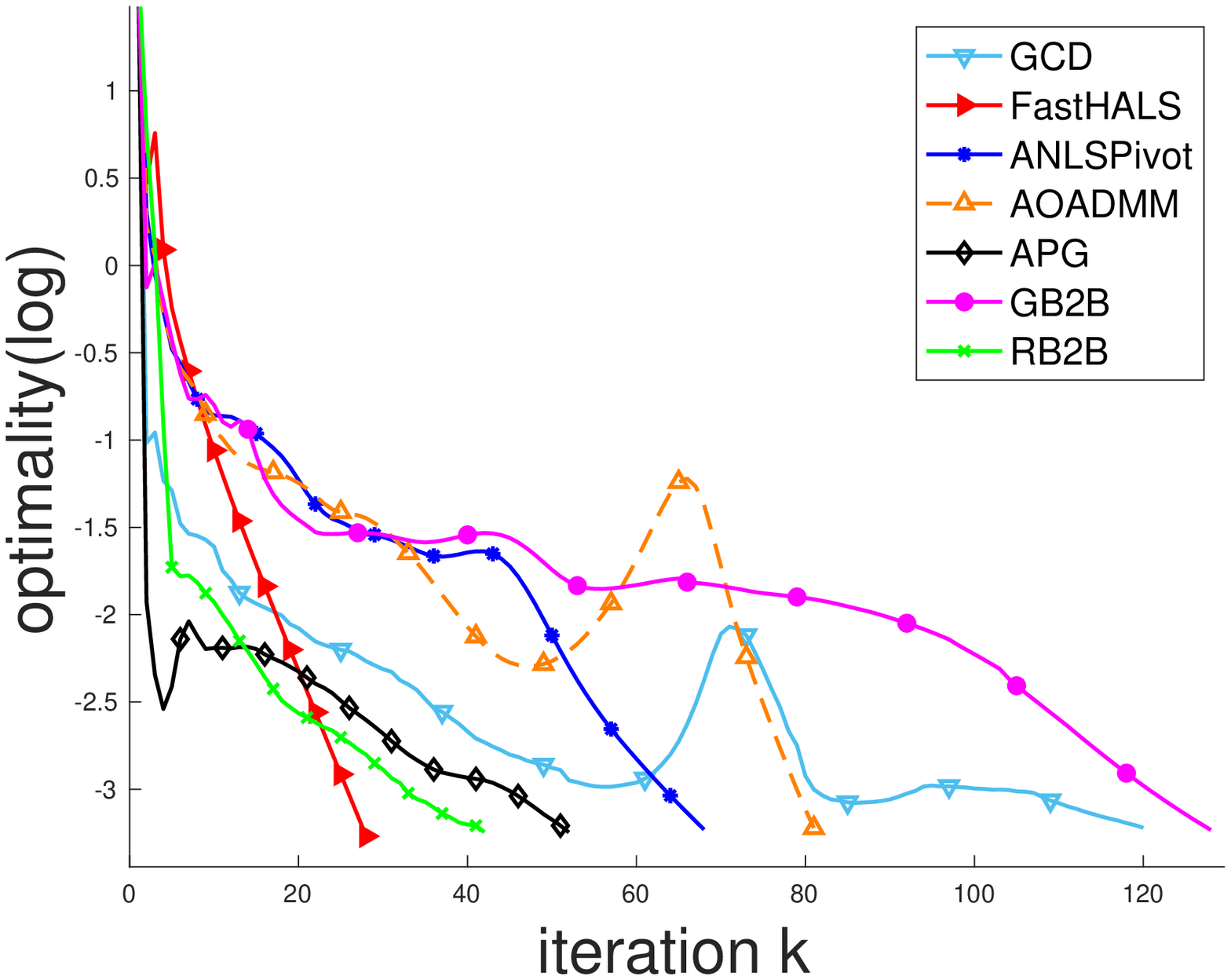}}
		\caption*{(a) Optimality versus iteration.}
	\end{minipage}
	\begin{minipage}{.24\linewidth}
		\centerline{\includegraphics[width=\columnwidth]{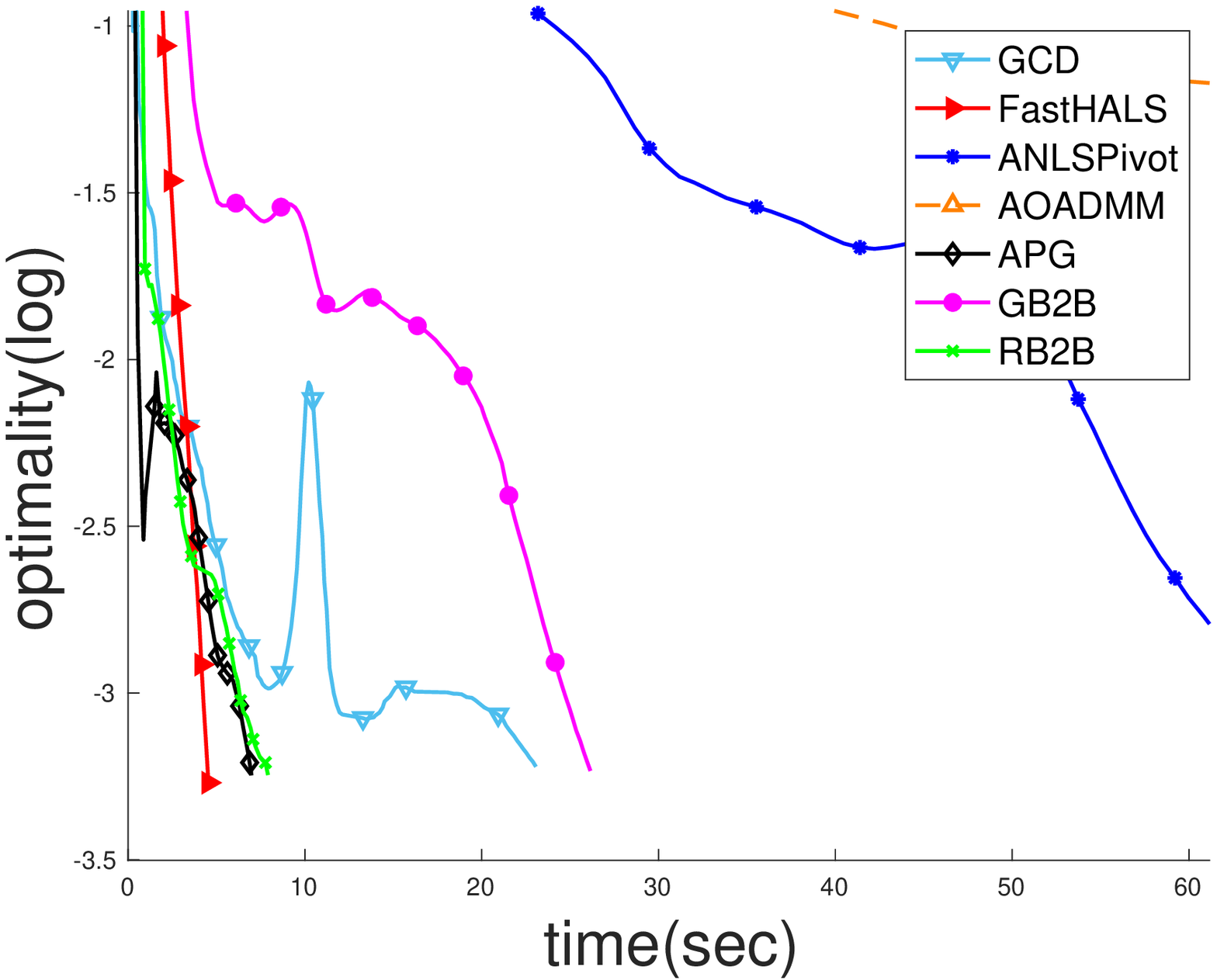}}
		\caption*{(b) Optimality versus runtime.}
	\end{minipage}
	\begin{minipage}{.24\linewidth}
		\includegraphics[width=\columnwidth]{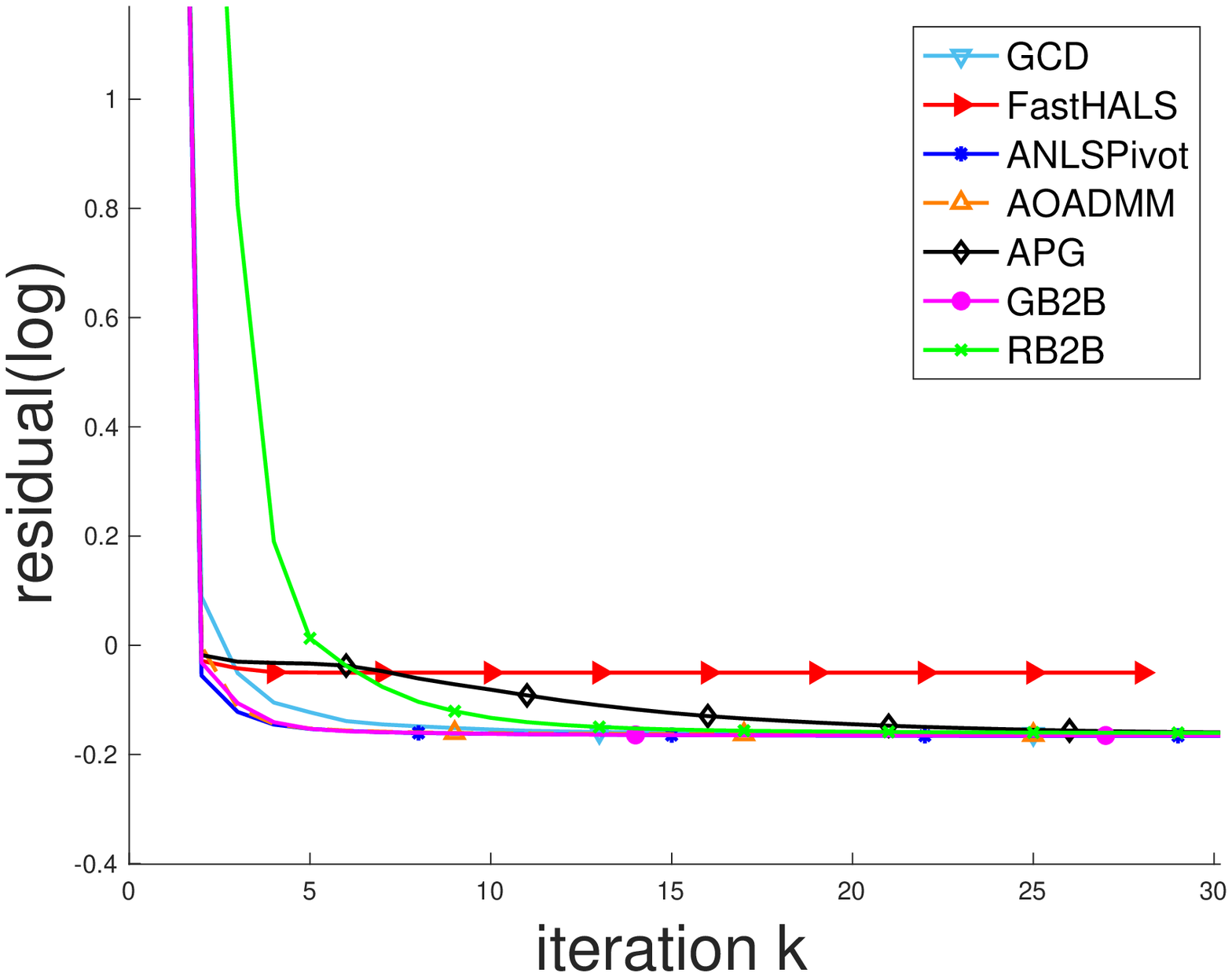}
		\caption*{(c) Residual versus iterations.}
	\end{minipage}%
	\begin{minipage}{.24\linewidth}
		\centerline{\includegraphics[width=\columnwidth]{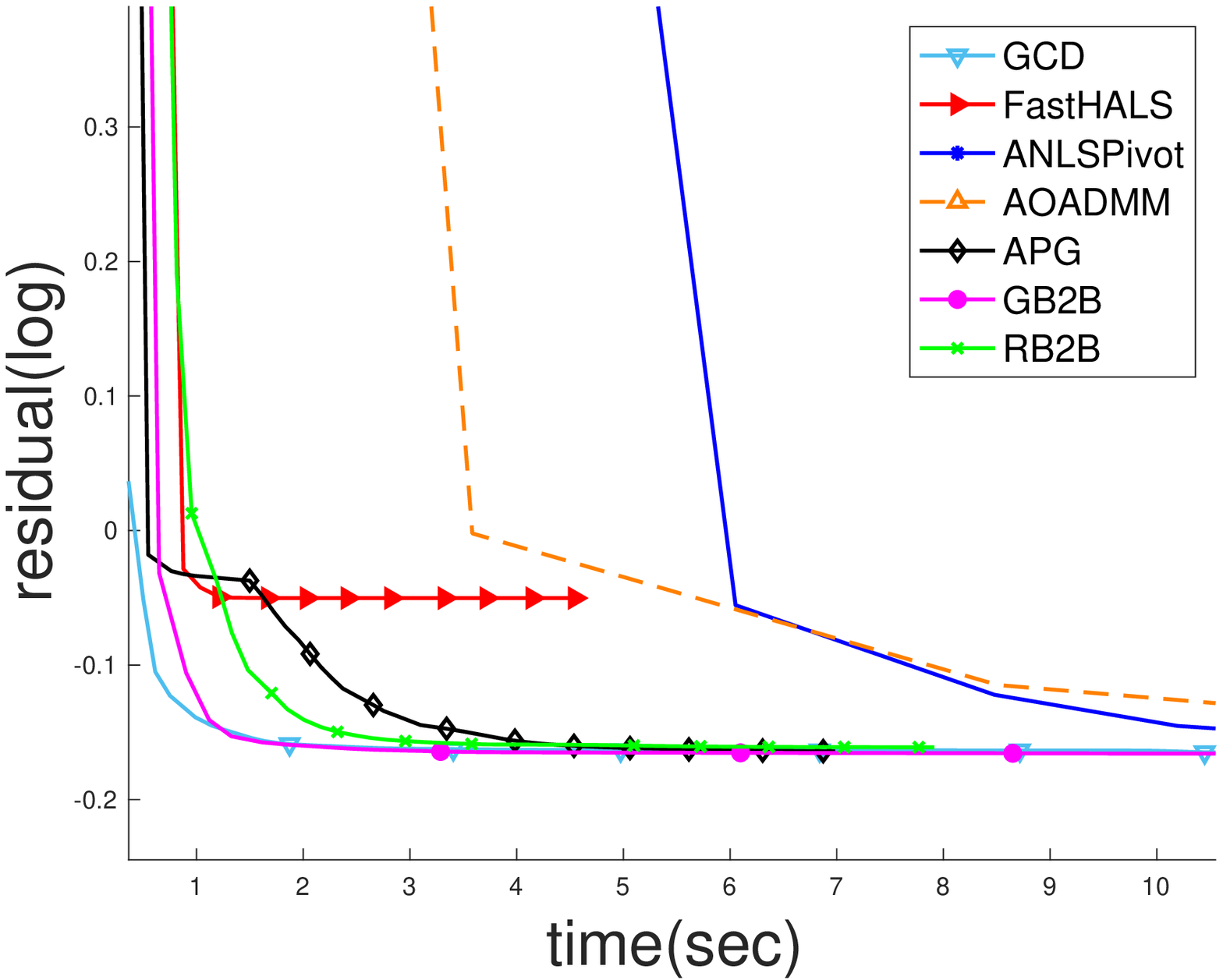}}
		\caption*{(d) Residual versus runtime.}
	\end{minipage}
	\caption{Convergence behaviors of different algorithms on \textbf{TDT2}: (a)-(b) illustrate the changes in optimality versus iterations and runtime; (c)-(d) illustrate the changes in the residual versus iterations and runtime.}
	\label{fig:tdt2}
\end{figure*}
To showcase the strength of the B2B algorithm, we use B2B to solve the nonnegative matrix factorization (NMF) problem \cite{lee1999learning,gao2018did,gao2016minimum}. As an efficient dimension reduction method, NMF plays a crucial role in various areas, such as text mining \cite{cai2010graph}, face recognition \cite{zhao2017multi}, network detection \cite{wang2017community}, etc.

Given an elementwise nonnegative matrix $\A\in\reals_+^{M\times N}$ and a desired rank $R\leq \min\{M,N\}$, NMF seeks to approximate $\A$ by an outer product of two nonnegative matrices $U$ and $V$, \ie,
\begin{align}
\min \; \norm{\A-\U \V^T}^2_F\quad\text{such that}\; \U, \V\geq 0,\label{opt:nmf}
\end{align}
Clearly, this problem is nonconvex, and finding the exact NMF is NP-hard \cite{vavasis2009complexity}.

To apply Algorithm~\ref{alg:B2B}, we consider each column in $\U$ or $\V$ as a block. Due to space limitation, we only show the update rule for variable $\V$, since the update rule for $\U$ is similar. Define the corresponding function $f_b$ for $\bv_b$ as $f_b(\bv_b) = \frac{1}{2}\norm{\overline{\A} - \u_b \bv_b^T}^2_F$,
where $\overline{\A} = \A - \sum_{c \neq b} \u_c \bv_c^T$.
Here, we define $h_b$ as
$
h_b(\bv_b) = \frac{1}{2}\u_b^T\u_b\norm{\bv_b}^2.
$
\begin{prop}\label{prop:nmf relative smooth}
	Let $(f_b, h_b)$ be defined as above. Then for any $L\geq 1$, the function $Lh_b- f_b$ is convex.
\end{prop}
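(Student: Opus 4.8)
The plan is to show that $Lh_b-f_b$ is a quadratic function of $\bv_b$ whose Hessian is a constant positive semidefinite matrix, which is the most direct route to convexity here. First I would expand the Frobenius norm defining $f_b$. Writing $\u=\u_b$ and $\bv=\bv_b$ for brevity and using $\norm{\bM}_F^2=\mathrm{tr}(\bM^\T\bM)$,
\begin{align*}
f_b(\bv)=\tfrac12\norm{\overline{\A}}_F^2-\u^\T\overline{\A}\bv+\tfrac12(\u^\T\u)\norm{\bv}^2,
\end{align*}
where the cross term uses $\mathrm{tr}(\overline{\A}^\T\u\bv^\T)=\u^\T\overline{\A}\bv$ and the last term uses $\mathrm{tr}(\bv\u^\T\u\bv^\T)=(\u^\T\u)\norm{\bv}^2$. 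Thus $f_b$ is, as a function of $\bv_b$, a constant plus a linear term plus a quadratic term with coefficient $\tfrac12(\u^\T\u)$.

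Next I would read off the two Hessians. The only $\bv$-dependent quadratic term in $f_b$ is $\tfrac12(\u^\T\u)\norm{\bv}^2$, so $\nabla^2 f_b(\bv)=(\u^\T\u)\bI$, a constant matrix independent of $\bv$. Since $h_b(\bv)=\tfrac12(\u^\T\u)\norm{\bv}^2$ by definition, likewise $\nabla^2 h_b(\bv)=(\u^\T\u)\bI$. Subtracting,
\begin{align*}
\nabla^2\!\big(Lh_b-f_b\big)(\bv)=(L-1)(\u^\T\u)\,\bI .
\end{align*}
Because $\u^\T\u=\norm{\u_b}^2\ge 0$ and $L\ge 1$, this matrix is positive semidefinite everywhere, so $Lh_b-f_b$ is convex; in the boundary case $L=1$ it is in fact affine in $\bv_b$, hence still convex. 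This is exactly the claimed statement.

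I do not anticipate any genuine obstacle: the argument is a short direct computation, and the only points needing care are the two trace identities in the expansion and the trivial remark that $\u_b^\T\u_b\ge 0$ (which also covers the degenerate case $\u_b=\mathbf 0$, where $Lh_b-f_b$ is affine). For context, this proposition certifies that the relative smoothness constant for each NMF block subproblem can be taken as $L_b=1$, so the stepsize conditions and convergence guarantees established earlier for CBBCD and for GB2B/RB2B apply directly to the NMF formulation \eqref{opt:nmf} with this choice of reference functions $h_b$.
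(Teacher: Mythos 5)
Your proof is correct and follows essentially the same route as the paper: both arguments reduce convexity of $Lh_b - f_b$ to the Hessian comparison $L\nabla^2 h_b \succeq \nabla^2 f_b$ and then compute $\nabla^2 f_b(\bv_b) = \nabla^2 h_b(\bv_b) = (\u_b^\T\u_b)\bI$, so the difference is positive semidefinite for all $L\geq 1$. You merely spell out the Frobenius-norm expansion that the paper calls a \quo{straightforward computation}.
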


In the definition of $h_b$, we have $m_b=M_b=1/\u_b^T \u_b$ and $\beta_b=1$. As a result, we have unit stepsize $\alpha^k = 1$. The main computational step requires to computing the search direction $\d_b^k$ in \eqref{opt:two metric mapping}, \ie,
$
\d_b = \nabla h_b^*[\nabla h_b(\x_b) - \nabla f_b(\x_b)]-\x_b.
$
It can be shown that
$
\d_b = \overline{\A}^T \u_b/\u_b^T \u_b - \bv_b.
$
With unit stepsize, we obtain $\bv_b^{+} = [\overline{\A}^T\u_b/\u_b^T \u_b ]_+$.
\begin{remark}
	The update rule for $\bv_b$ is not well-defined if $\u_b^T\u_b=0$. Thanks to the notion of ``valid block'', these blocks will not be selected as they are invalid. Indeed, suppose $\u_b^T\u_b = 0$. As $\u_b\geq0$, we must have $\u_b=0$, and further $\nabla f_b(\bv_b)=0$ indicating $\bv_b$ is not a valid block.
\end{remark}



We compare the proposed algorithms GB2B and RB2B with five state-of-the-art algorithms:
\begin{enumerate}
	\item \textit{GCD}: A greedy BCD method \cite{hsieh2011fast}, where the block is selected based on the reduction in the objective function from the previous iteration.
	\item \textit{FastHALS}: A cyclic BCD method \cite{cichocki2009fast}. Here we use its fast implementation.
	\item \textit{ANLSPivot}: An alternating method and each subproblem is solved by \textit{block principal pivot} (BPP) \cite{{kim2011fast}}.
	\item \textit{AOADMM}: An alternating algorithm where the subproblems are solved by ADMM \cite{huang2016flexible}.
	\item \textit{APG}: An alternating proximal gradient method with extrapolation \cite{xu2017globally}.
\end{enumerate}
All algorithms are implemented in Matlab by the authors of the original works, except FastHALS.

We evaluate the algorithms using the following datasets.
\begin{enumerate}
	\item \textit{ORL}\footnote{http://www.cad.zju.edu.cn/home/dengcai/Data/FaceData.html}: This dataset includes 40 distinct subjects which has 10 different images, where each image has $32\times32$ pixels.
	\item \textit{COIL}\footnote{http://www.cad.zju.edu.cn/home/dengcai/Data/MLData.html}: This dataset contains $7200$ images of size $32\times 32$ of $100$ objects.
	\item \textit{YaleB}\footnote{http://www.cad.zju.edu.cn/home/dengcai/Data/FaceData.html}: This dataset includes 2,414 images of 38 individuals of size 32$\times$32.
	\item MNIST\footnote{http://yann.lecun.com/exdb/mnist/}: This dataset contains handwritten digits, which has 70,000 samples of size 28$\times$28.
	\item News20:\footnote{http://qwone.com/\texttildelow jason/20Newsgroups/} A collection of 18,821 documents across 20 different newsgroups with 8,165 keywords in total.
	\item TDT2:\footnote{ http://projects.ldc.upenn.edu/TDT2/} A text dataset containing news articles from $30$ different topics.
\end{enumerate}
The detailed statistics of the datasets are given in Table~\ref{tab:dataset}.
\begin{table}[ht]
	\centering
	\begin{tabular}{|c|| c| c | c |c|}
		\hline
		Dataset	&$M$ & $N$ &$K$ & $\epsilon$\\ \hline
		ORL  & 1024 &  400 &  40 &$10^{-3}$ \\ \hline
		YaleB  & 1024 & 2414&  38 & $10^{-3}$ \\ \hline
		COIL  & 1024 & 7200&  100 & $10^{-3}$ \\ \hline
		News20 & 8165 & 18821&  20 & $10^{-5}$ \\ \hline
		MNIST  & 784& 70000&  10&$10^{-5}$ \\ \hline
		TDT2 & 9394 & 36771& 30 & $10^{-5}$ \\ \hline
	\end{tabular}
	\caption{The parameters of the datasets, where $M$ is the dimension, $N$ is the number of samples, $K$ is the desired rank, and $\epsilon$ is the relative error.}\label{tab:dataset}
\end{table}

All algorithms start with the same initial point whose entries are uniformly distributed in the interval $[0,1]$. We stop each algorithm if the relative projected gradient is small enough, \ie,
$
\norm{\nabla^P f(\x^k)}_F \leq \epsilon \norm{\nabla^P f(\x^0)}_F,\label{eq.stopcond}
$
or a total number of 1000 iterations has been reached. Since NMF is \textit{nonconvex}, it may include multiple critical points. The quality of the critical point that an algorithm converges to is also important. Hence, we also record and compare the \textit{relative residual} defined by
$
e^k =\norm{\A-\U^k \V^{kT} }_F/\norm{\A}_F.
$
The results are averaged over 20 Monte Carlo trials and summarized in Table~\ref{tab:result}, and the convergence behaviors are illustrated in Figures~\ref{fig:orl}-\ref{fig:tdt2} in log scale. The standard deviations are small and so we does not include them in Table~\ref{tab:result}.

From Table~\ref{tab:result}, we can conclude that GB2B is consistently faster than the other algorithms in most cases. RB2B is also a good solver for NMF but is relatively slower than GB2B. In principle, RB2B should be faster than GB2B since GB2B needs to spend more time to select a block. Nevertheless, Table~\ref{tab:result} shows the exact opposite. In the use of greedy choice, the number of iterations used by GB2B is much fewer than the number of iterations used by RB2B. Consequently, the overall performance of GB2B is much better than RB2B, even each iteration in RB2B is cheaper. In fact, ANLSPiovt and AOADMM also use fewer number of iterations, but they are slower than GB2B in terms of runtime. Such superiority becomes more apparent when the size and the sparsity of the datasets increase.

In Figures~\ref{fig:orl}(a)-\ref{fig:tdt2}(a), the optimality of GB2B continuously decreases across iterations in most cases, while relatively large oscillations appears in RB2B and other methods. From Table~\ref{tab:result} and Figures~\ref{fig:tdt2}(a)-(b), it can be observed that FastHALS is also a fast solver for the text datasets, but Figures~\ref{fig:tdt2}(c)-(d) indicate that FastHALS may converge to a poor quality critical point. This problem is also observed in other text datasets Figures~\ref{fig:news20}-\ref{fig:mnist}. In summary, we can see that GB2B is the most efficient algorithm among the compared algorithms.

\section{Concluding Remarks}
In this paper, we proposed a block-wise Bregman proximal gradient descent algorithm for composite nonconvex problems, where the smooth part does not satisfy the global Lipschitz-continuous gradient property. With two reference functions, the Bregman projection reduces to the orthogonal projection so that a closed-form solution of the projection subproblem can be obtained. The global convergence of the proposed algorithms are proved for various block selection rules. In particular, we show that a global convergence rate of $\bigo{\frac{\sqrt{s}}{\sqrt{k}}}$ can be achieved by the greedy and randomized rule, which is $\bigo{\sqrt{s}}$ faster than the cyclic rule. We perform multiple numerical experiments based on real datasets to demonstrate the superiority of the proposed B2B algorithms for the NMF problem, which shows that the greedy B2B is faster than the compared algorithms and is able to converge to a better quality critical point.

\linespread{0.9}\normalsize
\section{Appendix}
\subsection{Proof of Proposition~\ref{prop:cyclic prop}}
	\begin{itemize}
	\item[(\emph{i})] From the optimality of $\x_b^{k+1}$ in \eqref{opt:GS}, we have
	\begin{align*}
	r_{b}(\x_b^{k+1}) +& \inn{\nabla f^k_{b} (\x^k_{b})}{\x_b^{k+1}-\x_{b}^k} \\
	&+ \frac{1}{\alpha^k}D_h(\x_b^{k+1}, \x^k_{b})\leq r_{b}(\x_b^k).
	\end{align*}
	Together with Lemma~\ref{lemma:nolip}, we then obtain
	\begin{align}
	&f^k_b(\x_b^{k+1}) + r_b(\x_b^{k+1})\\
	\leq& f^k_b(\x_b^k) + r_b(\x_b^k) - \left(\frac{1}{\alpha^k} - L_{b}\right)D_h(\x_b^{k+1},\x_b^k)\label{eq:cyclic prop 1}.
	\end{align}
	Summing over \eqref{eq:cyclic prop 1} for $b=1,\cdots, s$ yields
	\begin{align*}
	F(\x^{k+1}) \leq F(\x^k)-\sum_{b=1}^s \left(\frac{1}{\alpha^k} - L_b\right)D_h(\x_b^{k+1},\x_b^k)
	\end{align*}
	where we use the facts that $f^k_b(\x_b^{k}) = f^k_{b-1} (\x_{b-1}^{k+1})$. Thus, the sequence $\{F(\x^k)\}$ is nonincreasing.
	
	\item[(\emph{ii})] Noting that $L\geq L_b$ for all $b$, we obtain
	\begin{align*}
	F(\x^{k+1})
	\leq F(\x^k)-\sum_{b=1}^s \left(\frac{1}{\alpha^k} - L\right)D_h(\x_b^{k+1},\x_b^k).
	\end{align*}
	Taking the telescopic sum of the inequality above for $k=0,1,\cdots, N$ gives us
	\begin{align*}
	\sum_{k=0}^N \sum_{b=1}^s\left(\frac{1}{\alpha^k} - L\right)&D_h(\x_b^{k+1},\x_b^k) \\
	\leq& F(\x^0) - F(\x^{N+1})\leq F(\x^0) - F^*,
	\end{align*}
	where $F^*=\inf F  >\infty$. Since $\alpha^k$ is a constant, dividing both sides by $\frac{1}{\alpha^k} -L$ and taking the limit $N\rightarrow \infty$ yields the desired result.
	
	\item[(\emph{iii})] E.q. \eqref{eq:cyclic prop 1} further implies that
	\begin{align*}
	(N+1) \min_{0\leq k\leq N} \sum_{b=1}^s &D_h(\x_b^{k+1},\x_b^k) \\
	&\leq \frac{\alpha}{1 - \alpha L} (F(\x^0) - F^*),
	\end{align*}
	which yields the desired result by dividing $(N+1)$.
\end{itemize}

\subsection{Proof of Proposition~\ref{prop:cyclic convergence}}
	The optimality condition of \eqref{opt:GS} is given by
	\begin{align*}
	0 \in\partial r_b^k(\x_b^{k+1}) + \nabla f_b^k(\x_b^k) + \frac{1}{\alpha^k}\left(\nabla h_b(\x_b^{k+1}) - \nabla h_b(\x_i^k)\right) .
	\end{align*}
	Therefore, by defining
	\begin{align*}
	\w_b^{k+1} =& \nabla_b f(\x^{k+1}) - \nabla f_b^k(\x_b^k) \\
	&+ \frac{1}{\alpha^k}\left(\nabla h_b(\x_b^{k}) - \nabla h_b(\x_b^{k+1})\right),
	\end{align*}
	we have that $\w_b^{k+1}\in \partial_b F(\x^{k+1})$. Since $\nabla f$ and $\nabla h_b$ are both $\ell$-Lipschitz-continuous on any bounded set and $\{\x^k\}$ is bounded, we have
	\begin{align*}
	\norm{\w_b^{k+1} }\leq&\norm{\nabla_b f(\x^{k+1}) - \nabla_b f_b^k(\x_b^k) }\\
	&+ \frac{1}{\alpha^k} \norm{\nabla h_b(\x_b^{k}) - \nabla h_b(\x_b^{k+1})}\\
	\leq& \ell \sum_{i = b}^s\norm{\x_i^{k+1} - \x_i^k} + \frac{\ell}{\alpha^k}\norm{\x_b^{k+1} - \x_b^k}\\
	\leq& \ell\left(1+\frac{1}{\alpha^k}\right)\norm{\x^{k+1} - \x^k}.
	\end{align*}
	Clearly, we have $\w^{k+1}\in \partial F(\x^{k+1})$. Summing over all $b=1,\cdots, s$ yields the desired result
	\begin{align*}
	\norm{\w^{k+1}} =& \sum_{b=1}^s\norm{\w_b^{k+1} } \leq \sum_{b=1}^s \ell\left(1+\frac{1}{\alpha^k}\right)\norm{\x^{k+1} - \x^k} \\
	=& s\ell\left(1+\frac{1}{\alpha^k}\right)  \norm{\x^{k+1} - \x^k}.
	\end{align*}	
	
	Let $\x^*$ be a limit point of $\{\x^k\}$, and there exists a subsequence $\{\x^{k_q}\}$ such that $\x^{k_q} \rightarrow \x^*$ as $q\rightarrow \infty$. Since the functions $r_b$ are lower semi-continuous, we have for all $b$,
	\begin{align}
	\liminf_{q\rightarrow \infty} r_b(\x_b^{k_q}) \geq r_b(\x_b^*).\label{eq:lower semicontinuous}
	\end{align}
	From \eqref{opt:GS}, we have for all $k$, taking $\x_b = \x_b^*$ yields
	\begin{align*}
	r_b(\x_b^{k+1}) + &\inn{\nabla f_b^k(\x_b^k)}{\x_b^{k+1} - \x_b^k} +\frac{1}{\alpha^k}\d_b(\x_b^{k+1}, \x_b^k)\\
	&\leq r_b(\x_b^*) + \inn{\nabla f_b^k(\x^k_b)}{\x_b^* - \x_b^k} +\frac{1}{\alpha^k}\d_b(\x_b^*, \x_b^k),
	\end{align*}
	or equivalently,
	\begin{align*}
	r_b(\x_b^{k+1}) \leq&  r_b(\x_b^*) + \inn{\nabla f_b^k(\x^k_b)}{\x_b^* - \x_b^{k+1}}\\
	& +\frac{1}{\alpha^k}\d_b(\x_b^*, \x_b^k) - \frac{1}{\alpha^k}\d_b(\x_b^{k+1}, \x_b^k).
	\end{align*}
	Choosing $k = k_q - 1$ and letting $q\rightarrow \infty$ yields
	\begin{align}
	\limsup_{q\rightarrow \infty} r_b(\x_b^{k_q}) \leq  r_b(\x_b^*),\label{eq:upper continuous}
	\end{align}
	where we have used the facts that $\{\x^k\}$ are bounded, $\nabla f$ is continuous, and $\d_b(\x_b^{k+1},\x_b^k)\rightarrow 0$ as $k\rightarrow \infty$. For that reason, we also have $\x^{k_q}\rightarrow \x^*$ as $q\rightarrow \infty$. Thus, combining \eqref{eq:upper continuous} with \eqref{eq:lower semicontinuous}, we have
	\begin{align*}
	\lim\limits_{q\rightarrow \infty} r_b(\x_b^{k_q}) = r_b(\x^*_b).
	\end{align*}
	Furthermore, by the continuity of $f$, we obtain
	\begin{align*}
	\lim\limits_{q\rightarrow \infty} F(\x^{k_q}) =& \lim\limits_{q\rightarrow \infty} \left\{f(\x^{k_q}) +   \sum_{i=1}^s r_b(\x_b^{k_q}) \right\}\\
	=& f(\x^*) + \sum_{i=1}^s r_b(\x_b^*) = F(\x^*).
	\end{align*}
	From Proposition~\ref{prop:cyclic prop} (\emph{ii}), we have that $\w^{k_q}\in \partial F(\x^{k_q})$  and $\w^{k_q} \rightarrow 0$ as $q\rightarrow \infty$. The closeness of $\partial F$ implies $0\in \partial F(\x^*)$. Therefore, $\x^*$ is a critical point of $F$.
\vspace{-10px}

\subsection{Proof of Proposition~\ref{prop:subgradient equiavlent}}
	We need to show $\nabla^P f(\x)\in  \partial F(\x)$, which is equivalent to
	\begin{align*}
	\nabla^P f(\x) - \nabla f(\x)\in \partial r(\x).
	\end{align*}
	With $r(\x) = \delta_+(\x)$, the subdifferential of $\delta_+$ at a point $\x$ is given by
	\begin{align*}
	\partial \delta_+(\x) = \{\bv: \inn{\bv}{\x}= 0, \bv\leq 0\}.
	\end{align*}
	\begin{itemize}
		\item[(\emph{i})] 	If $\x_i > 0$, then we have $\nabla^P_i f(\x) = \nabla_i f(\x)$, and hence $\nabla^P_i f(\x) - \nabla_i f(\x) = 0$.
		\item[(\emph{ii})] If $\x_i = 0$ and $\nabla_i f(\x) > 0$, then $\nabla^P_i f(\x) = 0$, and so $\nabla^P_i f(\x) - \nabla_i f(\x) < 0$.
		\item[(\emph{iii})] If $\x_i = 0$ and $\nabla_i f(\x) \leq 0$, then $\nabla^P_i f(\x) = \nabla_i f(\x) $, and so $\nabla^P_i f(\x) - \nabla_i f(\x) = 0$.
	\end{itemize}
	Clearly, $\inn{\x}{\nabla^P f(\x) -\nabla f(\x) } = 0$
	and $\nabla^P f(\x) -\nabla f(\x) \leq 0$, which completes the proof.
\vspace{-10px}
\subsection{Proof of Lemma~\ref{lemma:critical point}}
We start with the proof for part (i).

	(1)$\Longrightarrow$(2). Note that the necessary optimality condition is:
	\begin{align*}
	\frac{\partial f(\x^k)}{\partial \x_i} = 0, \quad &\text{if $\x^k_i > 0$};\\
	\frac{\partial f(\x^k)}{\partial \x_i} \geq 0, \quad &\text{if $\x^k_i = 0$}.
	\end{align*}
	The desired result is obtained directly from the definition of $\nabla^P f(\bx^k) $ in \eqref{eq:projected gradient}.
	
	(2)$\Longrightarrow$(3). From the definition of subgradient of a convex function, the subdifferential of $\delta_+(\x)$ is given by
	\begin{align*}
	\partial \delta_+(\x)=\{\bv: \inn{\bv}{\x} = 0, \bv\geq 0\}.
	\end{align*}
	It is clear that $\inn{\nabla f(\x^k)}{\x^k} = 0$ and $\nabla f(\x)\geq 0$. From the block structure of $f$ and $\delta_+$, we have $-\nabla_b f(\x^k)\in \delta_+(\x_b^k)$ for all $b$.
	
 (3)$\Longrightarrow$(4). As none of the blocks is valid from the definition, we obtain $\d_b^k = 0$ and hence we obtain the desired result.

 (4)$\Longrightarrow$(1). Fixed $b$, and assume $\x^k(\alpha)=\x^k$ for all $\alpha > 0$. Let $\cW$ denote the index set that contains all coordinates from block $b$. Then we must have
	\begin{align*}
	\d_i^k = 0, \quad\forall \x^k_i>0, i\in \cW,\\
	\d_i^k \leq 0, \quad\forall  \x^k_i=0, i\in \cW.
	\end{align*}
	Since $b$ block is valid, we have that
	\begin{itemize}
		\item if $\x^k_i>0$ and $i\in \cW$, then $\partial f(\x^k)/\partial \x_i \neq 0$ and $\partial f(\x^k)/\partial \x_i\cdot \d_i^k = 0$;
		\item if $\x_i^k = 0$ and $i\in \cW$, then $\partial f(\x^k)/\partial \x_i < 0$ and so $\partial f(\x^k)/\partial \x_i\cdot \d_i^k \geq 0$.
	\end{itemize}
	These two relations imply
	\begin{align}
	\inn{\nabla_b f(\x^k)}{\d_b^k}\geq 0.\label{lemma:stat eq1}
	\end{align}
	However, from the optimality of the subproblem \eqref{opt:two metric mapping}, we have
	\begin{align}
	\inn{\nabla_b f(\x^k) }{\d_b^k}  + D_h(\x_b^k + \d_b^k, \x_b^k)\leq 0.\label{lemma: negative}
	\end{align}
	The convexity of $h$ implies $D_h(\x_b^k + \d_b^k, \x_b^k) \geq 0$, and hence $\inn{\nabla_b f(\x^k) }{\d_b^k} \leq 0$. Combining \eqref{lemma: negative} and \eqref{lemma:stat eq1}, we have $\inn{\nabla_b f(\x^k)}{\d_b^k}= 0$ and so $\d_b^k = \textbf{0}$. From the optimality condition of \eqref{opt:two metric mapping}, the solution for $\d_b^k$ is given by
	\begin{align*}
	\d^k_b = \nabla h^*[\nabla h(\x^k_b) - \nabla_b f(\x^k)] - \x^k_b.
	\end{align*}
	Nothing that $\d_b=\textbf{0}$, we obtain $\nabla_b f(\x^k) = \textbf{0}$. Since this condition holds for arbitrary block, $\x^k$ is a critical point.
	
	To prove the part (ii), we suppose that $\x^k$ is not a critical point. Let $\cW^k$ be the index set that contains all coordinates from the $b$-th block. Consider two index sets:
	\begin{align*}
	\cA^k &= \{i\in \cW^k: (\x_i^k > 0 \wedge \d_i^k \neq 0) \vee (\x_i^k=0 \wedge \d_i^k >0) \},\\
	\cB^k &= \{i\in \cW^k: (\x_i^k > 0 \wedge \d_i^k = 0) \vee (\x_i^k=0 \wedge \d_i^k \leq 0) \}.
	\end{align*}
	Clearly, we have $\cW^k = \cA^k\cup \cB^k$. Moreover, we have for all $i\in \cB^k$,
	\begin{align*}
	\x_i^k(\alpha) = \x_i^k\quad \forall \alpha > 0.
	\end{align*}
	Thus, if $\cA^k=\emptyset$, then we cannot make any progress, \ie, $\x^{k}(\alpha) = \x^k$ for all $\alpha > 0$. We will next need show that the index set $\cA^k\neq \emptyset$.
	
	By contradiction, assume that $\cA^k=\emptyset$. Since the selected block is valid, we have
	\begin{subequations}
		\begin{align*}
		\frac{\partial f(\x^k)}{\partial \x_i}<0, \quad&\text{if $\x^k_i = 0$},\\
		\frac{\partial f(\x^k)}{\partial \x_i}\neq 0, \quad&\text{if $\x^k_i > 0$},
		\end{align*}
	\end{subequations}
	Taking the inner product of $\nabla_b f(\x^k)$ and $\d_b^k$ yields
	\begin{align}
	\inn{\nabla_b f(\x^k)}{\d_b^k} = \sum_{i\in \cB^k}\frac{\partial f(\x^k)}{\partial \x_i} \cdot \d_i^k \geq 0.\label{lemma:descent eq1}
	\end{align}
	However, the optimality of \eqref{opt:two metric mapping} implies $\inn{\nabla_b f(\x^k)}{\d_b^k} \leq -D_h(\x_b^k + \d_b^k, \x_b^k)$. The strict convexity of $h$ implies $\inn{\nabla_b f(\x_b^k)}{\d_b^k} < 0$, which contradicts \eqref{lemma:descent eq1}. Therefore, the index set $\cA^k\neq \emptyset$.
	
	Next, we will derive a feasible descent direction based on the index set $\cA^k$ so that the descent of the objective value is guaranteed. We define a stepsize $\alpha_1$ such that
	\begin{align}
	\alpha_1 = \sup\{\alpha: \x_i^k + \alpha \d_i^k\geq 0, i\in \cA^k\}.\label{lemma:descent eq2}
	\end{align}
	Clearly, the stepsize $\alpha_1$ is either a finite positive value or $+\infty$. We define a direction $\overline{\d}_b^k$ as follows
	\begin{align}\label{lemma: descent eq dhead}
	\overline{\d}_i^k = \begin{cases}
	\d_i^k, &\text{if $i\in \cA^k$}\\
	\textbf{0}, &\text{if $i\in \cB^k$}.
	\end{cases}
	\end{align}
	From \eqref{lemma:descent eq2}, we have
	\begin{align*}
	\left[\x_b^k + \alpha \overline{\d}_b^k\right]_+ = \x_b^k + \alpha \overline{\d}_b^k,\quad \forall \alpha \leq \alpha_1,
	\end{align*}
	which implies that $\overline{\d}_b^k$ is a feasible direction. As discussed before, we know that
	\begin{align*}
	\sum_{i\in \cB^k}\frac{\partial f(\x^k)}{\partial \x_i} \cdot \d_i^k \geq 0.
	\end{align*}
	Therefore, we obtain
	\begin{align}
	\inn{\nabla_b f(\x^k)}{\overline{\d}_b^k}&\leq \inn{\nabla_b f(\x^k)}{\d_b^k} \label{lemma:descent eq4}\\
	&\leq -D_h(\x_b^k + \d_b^k, \x_b^k) < 0.
	\end{align}
	Clearly, the derived direction $\overline{\d}_b^k$ is a feasible descent direction. With $\overline{\alpha}_k\leq \alpha_1$, there exists a scalar $\alpha^k \leq \overline{\alpha}_k$ for which
	\begin{align}
	\x_b^k(\alpha^k) = \left[\x_b^k + \alpha^k \d_b^k\right]_+ = \x_b^k + \alpha^k \overline{\d}_b^k,\label{lemma:descent eq3}
	\end{align}
	and the desired relation \eqref{lemma:descent eq0} is satisfied.

\subsection{Proof of Theorem~\ref{thm:line search}}
	Let $\overline{\x}$ be a limit point of $\{\x^k\}$. Suppose that $\overline{x}$ is not a stationary point. Since $\{f(\x^k)\}$ is monotonically nonincreasing and $\inf_{\x\geq 0} f(\x) > -\infty$, the sequence must converge to a finite value. Since $f$ is continuous, $f(\overline{\x})$ is a limit point of $\{f(\x^k)\}$. Thus, it follows that the entire sequence $\{f(\x^k)\}$ converges to $f(\overline{\x})$, and
	\begin{align*}
	f(\x^k) - f(\x^{k+1}) \rightarrow 0.
	\end{align*}
	Moreover, by the definition of Armijo-like rule, we have
	\begin{align*}
	f(\x^k) - f(\x^{k+1}) \geq& -\sigma \inn{\nabla_b f(\x^k)}{\x_b^{k+1} - \x_b^k}\\
	=& -\sigma \alpha^k \inn{\nabla_b f(\x^k)}{\overline{\d}_b^k},
	\end{align*}
	where the equality follows from \eqref{lemma:descent eq3}. Therefore, the right hand side in the above relation tends to zero. Let $\{\x^{q}\}$ be the subsequence that converges to $\overline{\x}$ as $q\rightarrow \infty$. From \eqref{lemma:descent eq4}, we have
	\begin{align}
	\lim\limits_{q\rightarrow \infty}\alpha_{q}= 0.\label{thm:line search eq1}
	\end{align}
	Hence, by the definition of the Armijo-like rule, we must have for some $k_0\geq 0$
	\begin{align}
	f(\x^{q}) - f(\x^{q}(\alpha_{q}/\tau)) <
	-\sigma (\alpha_q/\tau) \inn{\nabla_b f(\x^q)}{\overline{\d}_b^q}, \quad \forall q\geq k_0,
	\label{thm:line search eq2}
	\end{align}
	\ie, the initial stepsize $\alpha_0$ will be reduced at least once for all $q\geq k_0$. Since $\{\x^k\}$ is bounded, it follows from \eqref{lemma:descent eq3} that $\{\overline{\d}^k\}$ is bounded. Therefore, there exists a subsequence $\{\d^{p}\}$ of $\{\d^{q}\}$ such that
	\begin{align*}
	\d^{p}\rightarrow \hat{\d}.
	\end{align*}
	From \eqref{thm:line search eq2}, we have
	\begin{align*}
	\frac{f(\x^{p}) - f(\x^{p}(\overline{\alpha}_p )) }{\overline{\alpha}_p}<
	-\sigma  \inn{\nabla_b f(\x^p)}{\overline{\d}_b^p},
	\end{align*}
	where $\overline{\alpha}_p = \alpha_{p}/\tau$. By using the mean value theorem, there exists some $\tilde{\alpha}_p\in [0, \overline{\alpha}_p]$ such that
	this relation is written as
	\begin{align*}
	\inn{\nabla_b  f(\x^{p}(\tilde{\alpha}_p )) }{\overline{\d}_b^p} <
	-\sigma  \inn{\nabla_b f(\x^p)}{\overline{\d}_b^p}.
	\end{align*}
	Taking limits in the above relation, we obtain
	\begin{align*}
	0\leq (1-\sigma)  \inn{\nabla_b f(\overline{\x})}{\hat{\d}_b}.
	\end{align*}
	Since $\sigma < 1$, it follows that
	\begin{align*}
	\inn{\nabla_b f(\overline{\x})}{\hat{\d}_b} \geq 0,
	\end{align*}
	which contradicts that $\hat{\d}_b$ is a descent direction in \eqref{lemma:descent eq4} if $\bx^k$ is not a stationary point. This proves the desired result.

\subsection{Proof of Lemma~\ref{lemma:decrease}}
	Applying Lemma~\ref{lemma:nolip} for $(f_b, h_b)$ and nothing that $f_b(\x^{k+1}) = f(\x^{k+1})$, we have
	\begin{align}
	&f(\x^{k+1})-f(\x^k)\leq \inn{\nabla_b f(\x^k)}{\x^{k+1}-\x^k} + L_bD_h(\x_b^{k+1},\x_b^k)\nonumber\\
	&\overset{\eqref{lemma:descent eq3}}{\leq} \alpha^k\inn{\nabla_b f(\x^k)}{\overline{\d}_b^k} + L_bD_h(\x_b^{k+1},\x_b^k)
	\nonumber\\
	&\overset{\eqref{lemma:descent eq4}}{\leq}\alpha^k\inn{\nabla_b f(\x^k)}{\d_b^k} + L_bD_h(\x_b^{k+1},\x_b^k).\label{lemma:decrease eq1}
	\end{align}	
	Note that the optimality condition of \eqref{opt:two metric mapping} is given by
	\begin{align*}
	\nabla_b f(\x^k) + \nabla h_b(\x_b^k+\d_b^k) - \nabla h_b(\x_b^k) = 0.
	\end{align*}
	Taking the inner product of the left-hand side of the above relation with $\z_b^k-\x_b^k$ yields
	\begin{align*}
	&\inn{\nabla_b f(\x^k) }{z^k_b-\x^k_b}= \inn{\nabla h_b(\x_b^k+\d_b^k) - \nabla h_b(\x_b^k)}{\x_b^k-\z_b^k} \\
	=&D_h(\z_b^k,\x_b^k + \d^k_b)-D_h(\z_b^k,\x_b^k)-D_h(\x_b^k, \x_b^k+\d_b^k),
	\end{align*}
	where the second equality follows from $D_h(\x,\z) - D_h(\x,\y)-D_h(\y,\x) = \inn{\nabla h(\y)-\nabla(\z)}{\x-\y}$. Setting $\d_b^k = \z^k_b-\x^k_b$ yields
	\begin{align*}
	\inn{\nabla_b f(\x^k) }{\d^k_b}
	=&-D_h(\x_b^k + \d_b^k,\x_b^k)-D_h(\x_b^k, \x_b^k+\d_b^k)\\
	\leq& -(1+\beta)D_h(\x_b^k + \d_b^k,\x_b^k).
	\end{align*}
	From \eqref{lemma:decrease eq1}, it follows that
	\begin{align*}
	f(\x^{k+1})\leq f(\x^k) - \alpha^k(1+\beta)D_h(\x_b^k + \d_b^k,\x_b^k) + L_bD_h(\x_b^{k+1},\x_b^k).
	\end{align*}
	From Assumption~\ref{assume: convex} (\emph{ii}) and \ref{assume: smooth}, we have
	\begin{align*}
	f(\x^{k+1})\leq& f(\x^k) - \frac{\alpha^k(1+\beta)m_b}{2}\norm{\d_b^k}^2 + \frac{L_b M_b(\alpha^k)^2 }{2}\norm{\overline{\d}_b^k}^2\\
	\leq&f(\x^k) - \frac{\alpha^k(1+\beta)m_b}{2}\left(1-\frac{L_b M_b\alpha^k}{(1+\beta)m_b}\right)\norm{\d_b^k}^2,
	\end{align*}
	where the last inequality follows from \eqref{lemma: descent eq dhead}.

\subsection{Proof of Proposition~\ref{prop:subsequence}}
\begin{itemize}
	\item[(\emph{i})] Since $0 < \alpha^k\leq\frac{(1+\beta)m}{2LM}$, we have
	\begin{align*}
	f(\x^+)\leq& f(\x)-\frac{\alpha^k(1+\beta)m}{4}\norm{\d_b^k}^2\\
	\overset{\eqref{lemma:descent eq3}}{\leq}&f(\x)-\frac{(1+\beta)m}{4\alpha^k}\norm{\x_b^{k+1} -\x_b^k}^2\\
	=&f(\x)-\frac{LM}{2} \norm{\x_b^{k+1} -\x_b^k}^2.
	\end{align*}
	Using the fact that $\norm{\x_b^{k+1} -\x_b^k}^2 =\norm{\x^{k+1} -\x^k}^2$ leads to the desired result.
	\item[(\emph{ii})] Summing the inequalities in Proposition~\ref{prop:subsequence} (\emph{i}) for $k=0,1, \cdots, N$, we obtain:
	\begin{align}
	\sum_{k=0}^{N}\norm{\x^{k+1} - \x^k}^2\leq& \frac{2(f(\x^0) - f(\x^N))}{LM} \nonumber\\
	\leq&\frac{2(f(\x^0) - f^*)}{LM},\label{prop:subsequence sum}
	\end{align}
	where $f^* = \inf_{\x\geq 0} f(\x)$. Taking the limit as $N\rightarrow \infty$, we can conclude that $\{\norm{\x^{k+1}-\x^k}^2\}$ converges to zero.
	\item[(\emph{iii})] From \eqref{prop:subsequence sum}, we also obtain that
	\begin{align*}
	(N+1) \min_{0\leq k\leq N} \norm{\x^{k+1} - \x^k}^2
	\leq& \sum_{k=0}^{N}\norm{\x^{k+1} - \x^k}^2\\
	\leq& \frac{2(f(\x^0) - f^*)}{LM}.
	\end{align*}
	Dividing $N+1$ on both sides completes the proof.
\end{itemize}
\vspace{-10px}
\subsection{Proof of Theorem~\ref{prop:greedy}}
\begin{itemize}
	\item[(\emph{i})] Note that the optimality condition of \eqref{opt:two metric mapping} can be written as
	\begin{align*}
	\d_b^k =& \nabla h_b^*[\nabla h_b(\x_b^k) - \nabla_b f(\x^k)] - \x_b^k\\
	=&\nabla h_b^*[\nabla h_b(\x_b^k) - \nabla_b f(\x^k)] - \nabla h_b^*[\nabla_b h(\x_b^k)].
	\end{align*}
	Then we obtain:
	\begin{align}
	\norm{\d_b^k}^2 =& \norm{\nabla h_b^*[\nabla h_b(\x_b^k) - \nabla_b f(\x^k)] - \nabla h_b^*[\nabla_b h(\x_b^k)]}^2 \nonumber\\
	\geq&  \frac{1}{M}\norm{\nabla_b f(\x^k)}^2\label{prop:greedy direction}
	\end{align}
	It follows from \eqref{eq:greedy} that
	\begin{align*}
	\norm{\d_b^k}^2\geq \frac{1}{sM} \norm{ \nabla^P f(\x^k) }^2.
	\end{align*}
	Combining the above relation with \eqref{lemma:decrease eq inequality} yields
	\begin{align*}
	f(\x^k) - f(\x^{k+1})\geq
	\frac{\alpha^k(1+\beta)m}{4sM}\norm{ \nabla^P f(\x^k) }^2
	\end{align*}
	\item[(\emph{ii})] Taking the telescopic sum over $k=0,1, \cdots, N$ yields
	\begin{align*}
	&(N+1) \min_{0\leq k\leq N} \norm{\nabla^P f(\x^k)}^2
	\leq \sum_{k=0}^{N} \norm{\nabla^Pf (\x^k)}\\
	\leq &\frac{4sM}{\alpha^k(1+\beta) m}\left(f(\x^k) - f(\x^N)\right)\\
	\leq &\frac{4sM}{\alpha^k(1+\beta) m}\left(f(\x^k) - f^*\right).
	\end{align*}
	Dividing both sides by $N+1$ yields the stated result.
	\item[(\emph{iii})] The desired result can be obtained by repeating the second part of the proof of Proposition~\ref{prop:cyclic convergence}, and so we omit the proof here for brevity.
\end{itemize}
\vspace{-15px}
\subsection{Proof of Theorem~\ref{prop:randomized}}
	\begin{itemize}
		\item[(\emph{i})] Since $\alpha^k \leq  \frac{(1+\beta)m}{2LM}$, we have
		\begin{align*}
		f(\x^k) - f(\x^{k+1}) &\geq \frac{\alpha^k(1+\beta)m}{4} \norm{ \d_b^k }^2\\
		&\overset{\eqref{prop:greedy direction}}{\geq}
		\frac{\alpha^k(1+\beta)m}{4M}\norm{\nabla_b f(\x^k)}^2
		\end{align*}
		Taking expectation on both sides of the above relation with respect to $b_k$ yields
		\begin{align*}
		f(\x^k) - \E_{b_k}f(\x^{k+1})\geq&
		\frac{\alpha^k(1+\beta)m}{4M} \E_{b_k}\norm{\nabla_b f(\x^k)}^2\\
		=&\frac{\alpha^k(1+\beta)m}{4M} \sum_{i=1}^s \frac{1}{s}\norm{\nabla_b f(\x^k)}^2\\
		=&\frac{\alpha^k(1+\beta)m}{4sM} \norm{\nabla^P f(\x^k)}^2
		\end{align*}
		We then take the expectation on both sides with respect to all variables $b_0, b_1, \cdots$, to obtain
		\begin{align*}
		f(\x^k) - \E f(\x^{k+1}) \geq  \frac{\alpha^k(1+\beta)m}{4sM} \E \norm{\nabla^P f(\x^k)}^2.
		\end{align*}
		\item[(\emph{ii})] Taking the telescopic sum for $k=0,1,\cdots, N$ yields
		\begin{align*}
		(N+1) \min_{0\leq k\leq N} \E \norm{\nabla^P f(\x^k)}\leq \frac{4sM }{\alpha^k (1+\beta) m}(f(\x^0) - f^*).
		\end{align*}
		Dividing both sides by $N+1$ completes the proof.
		\item[(\emph{iii})] The stated result can be obtained by repeating the second part of the proof of Proposition~\ref{prop:cyclic convergence}, and so we omit it.
	\end{itemize}
\vspace{-15px}
\subsection{Proof of Proposition~\ref{prop:nmf relative smooth}}
	Since $f_b$ and $h_b$ are both twice continuously differentiable, in order to ensure the convexity of $Lh_b- f_b$, it is sufficient to find $L > 0$ such that $L\nabla^2 h_b\succeq \nabla^2 f_b$. By a straightforward computation, we obtain that
	\begin{align*}
	\nabla^2 h_b(\bv_b) = \u_b^T\u_b I_b,\quad \nabla^2 f_b(\bv_b) = \u_b^T\u_b I_b.
	\end{align*}
	As $\nabla^2 h_b(\bv_b)  = \nabla^2 f_b(\bv_b)$, we have $L\nabla^2 h_b\succeq \nabla^2 f_b,\forall L\geq 1$.
\small
\onlytwo{\linespread{0.95}\normalsize}
\bibliographystyle{IEEE-unsorted}
\bibliography{refs}
\onecolumn
\section*{Supplemental}
\subsection*{Proof of Theorem~\ref{thm:Global convergence}}
Here, we first review the essential ingredients of the methodology \cite{bolte2014proximal}. To solve a general optimization problem in the form of $\eqref{opt:PG}$, we first define \textit{gradient-like descent sequence} as follows.
\begin{define}\label{def:gradient-like descent sequence}
	A sequence $\{\x^k\}$ is called gradient-like descent sequence for $F$ if the following three conditions hold
	\begin{itemize}
		\item[(\emph{i})] Sufficient decreases property. There exists a constant $\rho_1 > 0$ such that
		\begin{align}
		\rho_1\norm{\x^{k+1} - \x^k}\leq F(\x^k) -F(\x^{k+1}).
		\end{align}
		\item[(\emph{ii})] A subgradient lower bound for the iterates gap. There exists a scalar $\rho_2>0$ such that
		\begin{align}
		\norm{\w^{k+1}}\leq \rho_2\norm{\x^{k+1} - \x^k}
		\end{align}
		for some $\w^{k+1}\in \partial F(\x^{k+1})$.
		\item[(\emph{iii})] Let $\overline{\x}$ be a limit point of the subsequence ${\x^{k_q}}$, then $\limsup_{q\rightarrow \infty} F(\x^{k_q})\leq F(\overline{\x})$.
	\end{itemize}
\end{define}
The first two conditions are typical properties of a descent method. From Proposition~\ref{prop:subsequence}, Theorem~\ref{prop:greedy}, and Theorem~\ref{prop:randomized}, it follows that the first two conditions are satisfied for B2B method. The third condition is weak condition and trivially holds if $F$ is continuous. In the case of \eqref{opt:problem}, the third condition obviously holds due to Assumption~\ref{assump:lower bounded}. Let $\omega(\x^0)$ be the set of all limit points of $\{\x^k\}$.
\begin{lemma}\label{lemma:property of limit points}
	Let $\{\x^k\}$ be the sequence generated by Algorithm that is assumed to be bounded. The following assertions hold:
	\begin{itemize}
		\item[(\emph{i})] $\omega(\x^0)$ is nonempty and compact.
		\item[(\emph{ii})] $\omega(\x^0)\subset \crit F$.
		\item[(\emph{iii})]
		\begin{align}
		\lim\limits_{k\rightarrow \infty} \dist(\x^k, \omega(\x^0)) = 0.
		\end{align}
		\item[(\emph{iv})] $F$ is finite and constant on $\omega(\x^0)$.
	\end{itemize}
\end{lemma}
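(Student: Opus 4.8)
The plan is to run the standard argument for the limit-point set of a descent method, relying only on the three defining properties of a gradient-like descent sequence (already verified for the B2B iterates above via Proposition~\ref{prop:subsequence}, Theorem~\ref{prop:greedy}, Theorem~\ref{prop:randomized}, and the continuity/lower-semicontinuity of $F$) together with boundedness of $\{\x^k\}$. I would prove the four parts in the order (\emph{i}), (\emph{iv}), (\emph{ii}), (\emph{iii}), since part (\emph{iv}) is the natural input to part (\emph{ii}).

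For (\emph{i}), write $\omega(\x^0)=\bigcap_{q\geq 0}\overline{\{\x^k:k\geq q\}}$. Boundedness of $\{\x^k\}$ makes each set in this nested family nonempty and compact, so the intersection is nonempty and compact; equivalently, nonemptiness follows from Bolzano--Weierstrass and closedness from being an intersection of closed sets. For (\emph{iv}), the sufficient decrease property shows that $\{F(\x^k)\}$ is nonincreasing, and Assumption~\ref{assump:lower bounded}(\emph{iii}) shows it is bounded below, so it converges to some value $\bar F$ with $F^*\leq\bar F\leq F(\x^0)<\infty$. For any $\overline{\x}\in\omega(\x^0)$ with $\x^{k_q}\to\overline{\x}$, condition (\emph{iii}) of the gradient-like descent sequence gives $\limsup_{q\to\infty} F(\x^{k_q})\leq F(\overline{\x})$, while lower semicontinuity of $F$ gives $F(\overline{\x})\leq\liminf_{q\to\infty} F(\x^{k_q})$; since $F(\x^{k_q})\to\bar F$, all three quantities coincide, so $F\equiv\bar F$ on $\omega(\x^0)$, which is both the finiteness and the constancy claim.

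For (\emph{ii}), combine the sufficient decrease property with the convergence of $\{F(\x^k)\}$ to deduce $\norm{\x^{k+1}-\x^k}\to 0$, then invoke the subgradient lower bound to obtain $\w^{k+1}\in\partial F(\x^{k+1})$ with $\w^{k+1}\to 0$. Given a subsequence $\x^{k_q}\to\overline{\x}$, we also have $\x^{k_q+1}\to\overline{\x}$ (because the gap tends to zero), $F(\x^{k_q+1})\to F(\overline{\x})$ by part (\emph{iv}), and $\w^{k_q+1}\to 0$; the closedness of the graph of $\partial F$ then forces $0\in\partial F(\overline{\x})$, i.e.\ $\overline{\x}\in\crit F$. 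For (\emph{iii}), argue by contradiction: if $\dist(\x^k,\omega(\x^0))\not\to 0$, there exist $\varepsilon>0$ and a subsequence with $\dist(\x^{k_q},\omega(\x^0))\geq\varepsilon$; boundedness yields a convergent further subsequence whose limit lies in $\omega(\x^0)$ by definition, contradicting the $\varepsilon$-lower bound.

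The main obstacle is the closure step in (\emph{ii}): applying graph-closedness of $\partial F$ requires knowing $F(\x^{k_q+1})\to F(\overline{\x})$, and in the nonsmooth setting this is exactly why condition (\emph{iii}) of a gradient-like descent sequence (rather than mere continuity of $F$) has to be built into the definition and verified beforehand. Once that convergence is available, the remaining arguments are routine point-set topology and monotone-sequence facts.
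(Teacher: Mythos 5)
Your proof is correct and reaches all four conclusions, but part (\emph{ii}) takes a genuinely different route from the paper's. The paper argues concretely through the projected gradient: continuity of $f$ and monotone convergence of $\{f(\x^k)\}$ give $f(\x^{k_q+1})-f(\x^{k_q})\to 0$, the descent inequalities of Theorem~\ref{prop:greedy}(\emph{i}) / Theorem~\ref{prop:randomized}(\emph{i}) then force $\nabla^P f(\x^{k_q})\to 0$, and the equivalences of Lemma~\ref{lemma:critical point}(\emph{i}) identify the limit as a critical point. You instead run the abstract Bolte--Sabach--Teboulle argument: the iterate gap vanishes by sufficient decrease, $\w^{k+1}\to 0$ by the subgradient lower bound, and graph-closedness of $\partial F$ under $F$-attentive convergence (which you correctly secure via the $\limsup$ condition and part (\emph{iv})) yields $0\in\partial F(\overline{\x})$. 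Your route is more general---it would work for any lower semicontinuous regularizer $r$, not just $\delta_+$---but it leans on the bound $\norm{\w^{k+1}}\leq\rho_2\norm{\x^{k+1}-\x^k}$, which the paper asserts rather than derives in this form (Theorems~\ref{prop:greedy} and \ref{prop:randomized} control $\norm{\nabla^P f(\x^k)}$ by the function decrease, not directly by the iterate gap); the paper's projected-gradient route sidesteps that dependency here. The remaining parts match in substance: your nested-tails description of $\omega(\x^0)$ in (\emph{i}) is equivalent to the paper's direct closedness argument, your lsc-plus-$\limsup$ sandwich in (\emph{iv}) generalizes the paper's appeal to continuity of $f$, and your compactness contradiction in (\emph{iii}) supplies the detail the paper dismisses as immediate from the definition.
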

\begin{proof}
	\begin{itemize}
		\item[(\emph{i})] Since $\{\x^k\}$ is bounded, it has at least one limit point and so $\omega(\x^0)\neq \emptyset$. Let $\overline{\x}$ be a limit point of $\omega(\x^0)$. Then given $\varepsilon > 0$ there exists a point $\x^\prime\in \omega(\x^0)$ with $\norm{\x^\prime - \overline{\x}} < \varepsilon/2$. Note that $\x^\prime\in \omega(\x^0)$ is a limit point of $\{\x^k\}$. Thus there exists a point $\x^{\prime\prime}\in \{\x^k\}$ with $\norm{\x^{\prime} - \x^{\prime\prime} }< \varepsilon/2$. It then follows
		\begin{align*}
		\norm{\overline{\x} - \x^{\prime\prime}} \leq \norm{\overline{\x}-\x^{\prime}} + \norm{\x^{\prime} - \x^{\prime\prime} } < \varepsilon.
		\end{align*}
		Thus $\x$ is a limit point of $\{\x^k\}$, \ie, $x\in\omega(\x^0)$. We have shown $\omega(\x^0)$ contains all its limit points, indicating $\omega(\x^0)$ is closed. Since $\{\x^k\}$ is bounded, we have $\omega(\x^0)$ is compact.
		
		\item[(\emph{ii})] Let $\x^*$ be a limit point of $\{\x^k\}$, \ie, $\x^*\in\omega(\x^0)$, and there exist a subsequence $\{\x^{k_q}\}$ such that $\x^{k_q} \rightarrow \x^*$ as $q\rightarrow \infty$. Due to the continuity of $f$, we get
		\begin{align*}
		\lim\limits_{q\rightarrow \infty} f(\x^{k_q}) =  f(\x^*).
		\end{align*}
		If this subsequence $\{\x^{k_q}\}$ converges to a noncritical point, we must have, as in the proof of Theorem~\ref{thm:line search}, $f(\x^{k_q+1}) - f(\x^k_q)\rightarrow 0$. The relations in Proposition~\ref{prop:greedy}(i) and Proposition~\ref{prop:randomized}(i) imply $\nabla^P f(\x^{k_q})\rightarrow 0$. It follows from Lemma~\ref{lemma:critical point}(ii) that $\x^*$ is a critical point.
		
		\item[(\emph{iii})] The desired result follows directly from the definition of limit points.
		
		\item[(\emph{iv})] As $\{f(\x^k)\}$ is nonincreasing and $f^*>-\infty$, the sequence $\{f(\x^k)\}$ is convergent. Let $\lim\limits_{k\rightarrow \infty} f(\x^k) = \overline{f}$. Take $\x^*$ in $\omega(\x^0)$ and so $\x^{k_q}\rightarrow \x^*$ as $q\rightarrow \infty$. On one hand, we have $f(\x^{k_q}) \rightarrow \overline{f}
		$. On the other hand, we have $f(\x^{k_q}) \rightarrow f(\x^*)$. Hence, we have $f(\x^*) = \overline{f}$. Therefore, the restriction of $f$ to $\omega
		(\x^0)$ equals to $\overline{f}$.
	\end{itemize}
\end{proof}

The above, together with the so-called nonsmooth Kurdyka-\L ojasiewicz property \cite{bolte2007lojasiewicz} allows us to establish the global convergence results for B2B method. Recall the notion of Kurdyka-\L ojasiewicz (K\L) property. Given a set $S $, the distance from a point $\x$ to $\cS$ is defined by
\begin{align}
\dist(\x, \cS):= \inf\{\norm{\x-\y}, \y\in \cS\}.
\end{align}
We let $\dist(\x,\cS) \triangleq \infty$ for all $\x$ if $\cS=\emptyset$.

Let scalar $\eta > 0$. We let $\Phi_\eta$ denote the class of functions $\varphi$ as follows
\begin{align}
\Phi_\eta = \{\varphi \in C^0[0, \eta) \cap C^1(0, \eta): \varphi\geq 0, \varphi(0) = 0, \varphi \; \text{concave}, \;\text{and}\;\varphi^\prime > 0  \}.
\end{align}
Now we give the definition of Kurdyka-\L ojasiewicz property.
\begin{define}[Kurdyka-\L ojasiewicz property]\cite[Definition~6.2]{bolte2018first}\label{def:KL}
	A proper and lsc function $\phi:\rightarrow (-\infty, +\infty]$ has the K\L$\;$property locally at $\overline{\x} \in \dom \phi$ if there exist $\eta > 0$, $\varphi \in \Phi_\eta$, and a neighborhood $U(\overline{\x})$ such that
	\begin{align}
	\varphi^\prime(\phi(\x)  - \phi(\overline{\x})) \cdot \dist(0, \partial \phi) \geq 1
	\end{align}
	for all $x\in U(\overline{\x}) \cap [\phi(\overline{\x}) < \phi < \phi(\overline{\x}) + \eta ]$.
\end{define}

To establish the \textit{global} convergence of the proposed algorithm, we need to additionally assume the (nonsmooth) K\L$\;$property in Definition~\ref{def:KL} on the objective function $F$, which is stated as follows:
\begin{assump}
	The objective function $F$ satisfies the K\L$\;$property.
\end{assump}
To prove the main theorem,  we first invoke \cite[Lemma 6]{bolte2014proximal}.
\begin{lemma}\cite[Lemma 6]{bolte2014proximal}\label{lemma:uniform}
	Let $\Omega$ be a nonempty and compact set, and let $\phi:\rightarrow(-\infty, \infty]$ be a proper and lsc function. Assume $\phi$ is finite and constant on $\Omega$ and satisfies K\L$\;$property for every point in $\Omega$. Then, there exist $\epsilon > 0$ and $\eta > 0$ and $\varphi\in \Phi_\eta$ such that for all $\overline{\x}\in \Omega$, we have
	\begin{align}
	\varphi^\prime(\phi(\x) - \phi(\overline{\x}))\cdot \dist(0, \partial \phi(\x))\geq 1
	\end{align}
	for all $\x$ in the following intersection
	\begin{align}
	\{\x: \dist(\x, \Omega) < \epsilon \} \cap [\phi(\overline{\x}) < \phi <  \phi(\overline{\x})  + \eta ].
	\end{align}
\end{lemma}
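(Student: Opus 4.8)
The plan is to prove the lemma by the standard \emph{uniformization} argument: first invoke the pointwise K\L{} property at every point of $\Omega$, then use compactness of $\Omega$ to reduce to finitely many local data, and finally glue the finitely many desingularizing functions into a single $\varphi\in\Phi_\eta$. The key simplification I would exploit throughout is that $\phi$ is constant on $\Omega$, say $\phi\equiv\zeta$ on $\Omega$; hence $\phi(\overline{\x})=\zeta$ for \emph{every} $\overline{\x}\in\Omega$, so both the K\L{} inequality and the admissible band $[\phi(\overline{\x})<\phi<\phi(\overline{\x})+\eta]$ no longer depend on the particular base point, and it suffices to establish the inequality with $\phi(\overline{\x})$ replaced by $\zeta$.

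First I would apply Definition~\ref{def:KL} at each $\u\in\Omega$: since $\phi$ has the K\L{} property at $\u$ with $\phi(\u)=\zeta$, there exist a radius $\varepsilon_\u>0$, a level $\eta_\u>0$, and a modulus $\varphi_\u\in\Phi_{\eta_\u}$ such that
\begin{align}
\varphi_\u'(\phi(\x)-\zeta)\cdot\dist(0,\partial\phi(\x))\geq 1
\end{align}
for all $\x$ in the open ball $B(\u,\varepsilon_\u)$ intersected with $[\zeta<\phi<\zeta+\eta_\u]$. The collection $\{B(\u,\varepsilon_\u)\}_{\u\in\Omega}$ is an open cover of the compact set $\Omega$, so I would extract a finite subcover $B(\u_1,\varepsilon_1),\dots,B(\u_p,\varepsilon_p)$ with associated levels $\eta_1,\dots,\eta_p$ and moduli $\varphi_1,\dots,\varphi_p$. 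Because $V:=\bigcup_{i=1}^p B(\u_i,\varepsilon_i)$ is an open set containing the compact set $\Omega$, a routine compactness (Lebesgue-number) argument yields a uniform radius $\epsilon>0$ with $\{\x:\dist(\x,\Omega)<\epsilon\}\subseteq V$: otherwise one could pick $\x_n$ with $\dist(\x_n,\Omega)<1/n$ but $\x_n\notin V$, and a limit point of $\{\x_n\}$ would lie in $\Omega\setminus V$, a contradiction.

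The heart of the argument is the construction of a single desingularizing function. Set $\eta:=\min_{1\leq i\leq p}\eta_i>0$ and define
\begin{align}
\varphi(s):=\int_0^s \max_{1\leq i\leq p}\varphi_i'(t)\,dt,\qquad s\in[0,\eta).
\end{align}
I would then verify that $\varphi\in\Phi_\eta$: the integrand $\max_i\varphi_i'$ is a finite maximum of strictly positive functions that are continuous on $(0,\eta)$ and integrable near $0$ (since each $\varphi_i\in C^0[0,\eta_i)$ gives $\int_0^s\varphi_i'=\varphi_i(s)$), so $\varphi$ is continuous on $[0,\eta)$ and $C^1$ on $(0,\eta)$ with $\varphi(0)=0$, $\varphi\geq 0$, and $\varphi'>0$; moreover each $\varphi_i$ is concave so each $\varphi_i'$ is nonincreasing, and the pointwise maximum of nonincreasing functions is nonincreasing, which makes $\varphi$ concave. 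Finally, for any $\x$ with $\dist(\x,\Omega)<\epsilon$ and $\zeta<\phi(\x)<\zeta+\eta$, the point $\x$ lies in some $B(\u_i,\varepsilon_i)$ and satisfies $\phi(\x)<\zeta+\eta\leq\zeta+\eta_i$, so the local inequality applies; since $\varphi'(\phi(\x)-\zeta)\geq\varphi_i'(\phi(\x)-\zeta)$ by construction, I obtain
\begin{align}
\varphi'(\phi(\x)-\zeta)\cdot\dist(0,\partial\phi(\x))\geq \varphi_i'(\phi(\x)-\zeta)\cdot\dist(0,\partial\phi(\x))\geq 1,
\end{align}
which, recalling $\zeta=\phi(\overline{\x})$ for every $\overline{\x}\in\Omega$, is exactly the claimed uniform K\L{} inequality.

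The main obstacle I anticipate is this gluing step: one must produce a \emph{single} $\varphi$ that is simultaneously $C^1$, concave, and dominates each $\varphi_i'$ on the overlap region, while keeping $\varphi(0)=0$ and $\varphi'>0$. Taking the maximum of the derivatives and integrating resolves this cleanly, but the verification that $\max_i\varphi_i'$ is continuous and integrable near the origin (so that $\varphi$ is genuinely $C^1$ on $(0,\eta)$ and continuous up to $0$) and nonincreasing (so that $\varphi$ is concave) is where the argument must be handled with care; the remaining ingredients---the pointwise K\L{} property, the finite subcover, and the uniform radius $\epsilon$---amount to standard compactness bookkeeping.
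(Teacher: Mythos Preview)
The paper does not prove this lemma at all; it simply quotes it verbatim from \cite[Lemma~6]{bolte2014proximal} and uses it as a black box in the proof of Theorem~\ref{thm:Global convergence}. Your proposal reproduces the standard uniformization argument from that reference---pointwise K\L{} data, finite subcover by compactness, a Lebesgue-number choice of $\epsilon$, and gluing via $\varphi(s)=\int_0^s\max_i\varphi_i'(t)\,dt$---and is correct. So there is nothing to compare against in the present paper; you have supplied a valid proof where the authors chose to cite one.
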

Now we are ready to establish the result of global convergence.
\begin{theorem}[Global convergence]
	Let $\{\x^k\}$ be the sequence generated by Algorithm~\ref{alg:B2B}, which is assumed to be bounded. Then, the sequence $\{\x^k\}$ has a finite length and converges to a critical point of $F$.
\end{theorem}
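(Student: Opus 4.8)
The plan is to run the B2B sequence through the abstract K\L{}-based convergence scheme of \cite{bolte2014proximal,bolte2018first}: a bounded sequence that is a \emph{gradient-like descent sequence} for $F$, whose limit set lies in $\crit F$, and for which $F$ is a K\L{} function, must have finite length and converge to a single critical point. The three defining ingredients are already available. The \emph{sufficient decrease} property is Proposition~\ref{prop:subsequence}(i), which gives $F(\x^k)-F(\x^{k+1})\ge\tfrac{LM}{2}\norm{\x^{k+1}-\x^k}^2$; since $\{\x^k\}$ is bounded, $\norm{\x^{k+1}-\x^k}$ is bounded, so a constant $\rho_1>0$ also yields $F(\x^k)-F(\x^{k+1})\ge\rho_1\norm{\x^{k+1}-\x^k}$. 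The \emph{subgradient lower bound} $\norm{\w^{k+1}}\le\rho_2\norm{\x^{k+1}-\x^k}$ with $\w^{k+1}\in\partial F(\x^{k+1})$ follows by taking $\w^{k+1}=\nabla^P f(\x^{k+1})$, which is legitimate by Proposition~\ref{prop:subgradient equiavlent}, and combining the per-iteration descent of Theorem~\ref{prop:greedy}(i) (resp. Theorem~\ref{prop:randomized}(i)) with the descent lemma and the Lipschitz/smoothness bounds on $\nabla f,\nabla h_b$ valid on the bounded region containing the iterates. The continuity condition is immediate since $f$ is continuous and $r=\delta_+$ is the indicator of a closed set. Hence $\{\x^k\}$ is a gradient-like descent sequence.

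First I would record the structure of the limit set $\omega(\x^0)$ from Lemma~\ref{lemma:property of limit points}: it is nonempty and compact, $\omega(\x^0)\subset\crit F$, $\dist(\x^k,\omega(\x^0))\to 0$, and $F$ is constant, equal to $\overline F$, on $\omega(\x^0)$, with $F(\x^k)\downarrow\overline F$. If $F(\x^{k_0})=\overline F$ for some finite $k_0$, then sufficient decrease forces $\x^k=\x^{k_0}$ for all $k\ge k_0$ and the conclusion is trivial, so assume $F(\x^k)>\overline F$ for all $k$. Applying the uniformized K\L{} inequality of Lemma~\ref{lemma:uniform} with $\Omega=\omega(\x^0)$ gives $\epsilon,\eta>0$ and $\varphi\in\Phi_\eta$ such that $\varphi'(F(\x)-\overline F)\,\dist(0,\partial F(\x))\ge 1$ on $\{\x:\dist(\x,\omega(\x^0))<\epsilon\}\cap[\overline F<F<\overline F+\eta]$; using $\dist(\x^k,\omega(\x^0))\to 0$ and $F(\x^k)\downarrow\overline F$, all iterates with index $k\ge k_1$ lie in this set.

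The core is a telescoping estimate for $k\ge k_1$. Concavity of $\varphi$ gives
\begin{align*}
\varphi\big(F(\x^k)-\overline F\big)-\varphi\big(F(\x^{k+1})-\overline F\big)\ \ge\ \varphi'\big(F(\x^k)-\overline F\big)\big(F(\x^k)-F(\x^{k+1})\big);
\end{align*}
bounding $\varphi'(F(\x^k)-\overline F)\ge 1/\dist(0,\partial F(\x^k))\ge 1/(\rho_2\norm{\x^k-\x^{k-1}})$ via the K\L{} inequality and the subgradient bound, and using sufficient decrease $F(\x^k)-F(\x^{k+1})\ge\rho_1\norm{\x^{k+1}-\x^k}^2$, yields
\begin{align*}
\norm{\x^{k+1}-\x^k}^2\ \le\ \frac{\rho_2}{\rho_1}\,\norm{\x^k-\x^{k-1}}\,\Delta_k,\qquad \Delta_k:=\varphi\big(F(\x^k)-\overline F\big)-\varphi\big(F(\x^{k+1})-\overline F\big).
\end{align*}
Taking square roots and applying $2\sqrt{ab}\le a+b$ gives $2\norm{\x^{k+1}-\x^k}\le\norm{\x^k-\x^{k-1}}+\tfrac{\rho_2}{\rho_1}\Delta_k$; summing over $k\ge k_1$, the left telescopes against the first term on the right and $\sum_k\Delta_k\le\varphi(F(\x^{k_1})-\overline F)<\infty$, so $\sum_k\norm{\x^{k+1}-\x^k}<\infty$. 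Thus $\{\x^k\}$ has finite length, is Cauchy, and converges to some $\x^*$, which by Lemma~\ref{lemma:property of limit points}(ii) lies in $\crit F$.

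The main obstacle is not the telescoping argument, which is the textbook Bolte--Sabach--Teboulle scheme, but the verification of the linear subgradient bound in the single-block setting, where only the $b_k$-th block moves while $\nabla^P f(\x^{k+1})$ depends on all blocks; one must exploit that the search direction controls both the step and the block gradient (through $\norm{\d_b^k}^2\ge\tfrac1M\norm{\nabla_b f(\x^k)}^2$, used in Theorem~\ref{prop:greedy}) and that the coordinates excluded from $\overline\d^k$ contribute to neither side. The remaining work is bookkeeping: checking that $\rho_1,\rho_2$ are uniform in $k$ (they are, since $\alpha^k\equiv\alpha$ and all moduli are taken over a fixed bounded set) and handling the degenerate case where $F$ attains $\overline F$ in finitely many steps.
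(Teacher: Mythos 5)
Your proposal follows essentially the same route as the paper's own supplemental proof: verify the gradient-like descent sequence conditions, establish the structure of the limit set $\omega(\x^0)$, invoke the uniformized K\L{} inequality on $\Omega=\omega(\x^0)$, and run the standard concavity-plus-$2\sqrt{ab}\le a+b$ telescoping to get finite length and convergence to a single critical point. The only slip is the claim that boundedness of $\norm{\x^{k+1}-\x^k}$ upgrades the quadratic sufficient decrease to the linear form $F(\x^k)-F(\x^{k+1})\ge\rho_1\norm{\x^{k+1}-\x^k}$ (the implication runs the other way for small steps), but this is harmless since your telescoping, like the paper's, only ever uses the squared version.
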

\begin{proof}
	We first show there exist an integer $k > 0$ such that the uniformized K\L property holds. Since $\{\x^k\}$ is bounded, there exists a subsequence $\{\x^{k_q}\}$ that converges to a limit point $\overline{\x}$. As with Lemma~\ref{lemma:property of limit points}(ii), we have
	\begin{align}
	\lim\limits_{q\rightarrow \infty} F(\x^{k_q}) = F(\overline{\x}).
	\end{align}
	From Proposition~\ref{prop:subsequence}(ii), the distance between two consecutive iterates shrinks to zero as $q\rightarrow \infty$. Thus, $\x^{k_q}\rightarrow \overline{\x}$, which implies $\x^{k_q-1}\rightarrow \overline{\x}$ as $q\rightarrow \infty$. With simple induction, we have
	\begin{align}
	\lim\limits_{k\rightarrow \infty} F(\x^k) = F(\overline{\x}).
	\end{align}
	Since $\{F(\x^k)\}$ is nonincreasing, given a $\eta>0$, there must exist an integer $k_0$ such that $F(\x^k) < F(\overline{\x}) + \eta$ for all $k > k_0$. It follows from Lemma~\ref{lemma:property of limit points}(iii) that given an $\epsilon > 0$, there exist an integer $k_1$ such that $\dist(\x^k, \omega(\x^0)) \leq \epsilon$ for all $k > k_1$. Therefore, setting $l:=\max\{k_0, k_1\}$, we obtain:
	\begin{align}
	\x^k\in \{\x^k: \dist(\x^k, \omega(\x^0)) \leq \epsilon\}\cap [F(\overline{\x}) < F(\x^k) < F(\overline{\x}) + \eta], \quad\forall k> l.
	\end{align}
	Since $\omega(\x^0)$ is nonempty and compact from Lemma~\ref{lemma:property of limit points}(i), and $F$ is finite and constant on $\omega(\x^0)$ from Lemma~\ref{lemma:property of limit points}(iv), the uniformized K\L property in Lemma~\ref{lemma:uniform} holds by setting $\Omega = \omega(\x^0)$. Hence, for all $k> l$, we have
	\begin{align}
	\varphi^\prime (F(\x^k) - F(\overline{\x})\cdot \dist(0, \partial F(\x^k)) \geq 1.\label{eq:thm3-0}
	\end{align}
	It follows from Definition~\ref{def:gradient-like descent sequence} that
	\begin{align}
	\varphi^\prime (F(\x^k) - F(\overline{\x}) ) \geq \frac{1}{\rho_2\norm{\x^k - \x^{k-1}}}.\label{eq:thm3-1}
	\end{align}
	From the concavity of $\varphi$, we have
	\begin{align}
	\varphi (F(\x^{k+1}) - F(\overline{\x}) ) \leq  \varphi (F(\x^{k}) - F(\overline{\x}) ) + \varphi^\prime (F(\x^k) - F(\overline{\x}) ) \cdot ( F(\x^{k+1}) - F(\x^k) ) .
	\end{align}
	For convenience, for all $p,q>l$, we define
	\begin{align}
	\Delta_{p,q} : =\varphi (F(\x^{p}) - F(\overline{\x}) ) -  \varphi (F(\x^{q}) - F(\overline{\x}) ).
	\end{align}
	It follows from \eqref{eq:thm3-0} and \eqref{eq:thm3-1} that
	\begin{align*}
	\Delta_{k,k+1}\geq\varphi^\prime (F(\x^k) - F(\overline{\x}) ) \cdot ( F(\x^{k+1}) - F(\x^k) )
	\geq\frac{ \norm{\x^{k+1} -\x^k}^2 }{\rho\norm{\x^k - \x^{k-1}}},
	\end{align*}
	where
	$\rho=\frac{\rho_2}{\rho_1}$.
	
	Using the fact that $2\sqrt{\alpha \beta} \leq \alpha + \beta$, we obtain
	\begin{align}
	2\norm{\x^{k+1} - \x^k} \leq 2\sqrt{\rho \norm{\x^k - \x^{k-1}} \cdot \Delta_{k,k+1} }
	\leq \norm{\x^k-\x^{k+1}} + \rho \Delta_{k,k+1}.\label{eq:thm3-2}
	\end{align}
	Summing over the inequality \eqref{eq:thm3-2} for $i=l+1,\cdots ,k$ yields
	\begin{align*}
	2\sum_{i=l+1}^k \norm{\x^{i+1} - \x^i}
	&\leq  \sum_{i=l+1}^k  \norm{\x^i-\x^{i+1}} + \rho \sum_{i=l+1}^k\Delta_{i,i+1}\\
	&= \norm{\x^{l+1} - \x^l} - \norm{\x^{k+1}-\x^k} + \sum_{i=l+1}^k  \norm{\x^{i+1}-\x^{i}} + \rho \sum_{i=l+1}^k\Delta_{i,i+1}\\
	&= \norm{\x^{l+1} - \x^l} + \sum_{i=l+1}^k  \norm{\x^{i+1}-\x^{i}} + \rho \sum_{i=l+1}^k\Delta_{i,i+1}\\
	&= \norm{\x^{l+1} - \x^l} + \sum_{i=l+1}^k  \norm{\x^{i+1}-\x^{i}} + \rho \Delta_{l+1,k+1},
	\end{align*}
	where the last inequality follows from $\Delta_{p,r} = \Delta_{p,q} + \Delta_{q, r} $. Since $\varphi\geq 0$, we obtain
	\begin{align}
	\sum_{i=l+1}^k \norm{\x^{i+1} - \x^i} \leq
	\norm{\x^{l+1} - \x^l}  + \rho \varphi(F(\x^{l+1}) -F(\overline{\x}) ) =: B_0.\label{eq:thm3-3}
	\end{align}
	Since the upper bound $B_0$ is constant, the length of the sequence $\{\x^k\}$ is finite, \ie,
	\begin{align}
	\sum_{k=0}^\infty\norm{\x^{k+1}- \x^k} <\infty.
	\end{align}
	It is also clear that Eq.\eqref{eq:thm3-3} implies that the sequence $\{\x^k\}$ is a Cauchy sequence. In particular, with $q > p > l$, we have
	\begin{align}
	\norm{\x^q - \x^p}\leq \sum_{i=p}^{q-1}\norm{\x^{i+1} - \x^i}\leq  \sum_{i=l+1}^{q-1}\norm{\x^{i+1} - \x^i} \leq B_0.
	\end{align}
	Since \eqref{eq:thm3-3} implies that $\sum_{k=l+1}^{\infty}\norm{\x^{k+1}-\x^k}$ converges to zero as $l\rightarrow\infty$, it follows that $\{\x^k\}$ is a Cauchy sequence and hence it is a convergence sequence. The result follows immediately from Lemma~\ref{lemma:property of limit points}(ii).
\end{proof}
\end{document}